\documentclass[twoside]{article}

\usepackage{PRIMEarxiv}

\usepackage[utf8]{inputenc} % allow utf-8 input
\usepackage[T1]{fontenc}    % use 8-bit T1 fonts
\usepackage{hyperref}       % hyperlinks
\usepackage{url}            % simple URL typesetting
\usepackage{booktabs}       % professional-quality tables
\usepackage{amsfonts}       % blackboard math symbols
\usepackage{nicefrac}       % compact symbols for 1/2, etc.
\usepackage{microtype}      % microtypography
\usepackage{lipsum}
\usepackage{mathtools}
\usepackage{multicol}
\usepackage{fancyhdr}       % header
\usepackage{graphicx}       % graphics
\graphicspath{{media/}}     % organize your images and other figures under media/ folder
\usepackage{relsize}
\usepackage{amsmath}
\usepackage{amsthm}
\usepackage{amssymb}
\usepackage{tikz}
\usepackage{tikz-cd}
\usetikzlibrary{calc}
\usepackage{faktor}
\usepackage{adjustbox}
\usepackage{varwidth}
\usepackage{tasks}
\usepackage{enumitem}

\usetikzlibrary{arrows,calc,matrix}

  \newtheorem{theorem}{Theorem}[section]
	\newtheorem{proposition}[theorem]{Proposition}
	\newtheorem{conjecture}[theorem]{Conjecture}
	\newtheorem{lemma}[theorem]{Lemma}
	\newtheorem{corollary}[theorem]{Corollary}
	\theoremstyle{definition}
	\newtheorem*{claim*}{Claim}
	\newtheorem{remark}[theorem]{Remark}
	
	\newtheorem{definition}[theorem]{Definition}
	
	\newtheorem{example}[theorem]{Example}
	
	\theoremstyle{remark}

   \DeclareMathOperator {\lk}{lk} 
   \DeclareMathOperator {\slk}{slk} 

  \DeclareMathOperator {\Ker}{Ker}

%Header
\pagestyle{fancy}
\thispagestyle{empty}
\rhead{ \textit{ }} 

% Update your Headers here
\fancyhead[LO]{On the $\Sigma^1$ and $\Sigma^2$-invariants of Artin groups}
\fancyhead[RE]{Marcos Escartín-Ferrer}

\newcommand{\abs}[1]{\left|#1\right|}
\usepackage{scalerel,stackengine}
\stackMath
\newcommand\reallywidehat[1]{%
	\savestack{\tmpbox}{\stretchto{%
			\scaleto{%
				\scalerel*[\widthof{\ensuremath{#1}}]{\kern-.6pt\bigwedge\kern-.6pt}%
				{\rule[-\textheight/2]{1ex}{\textheight}}%WIDTH-LIMITED BIG WEDGE
			}{\textheight}% 
		}{0.5ex}}%
	\stackon[1pt]{#1}{\tmpbox}%
}
\parskip 1ex
\newcommand\restr[2]{{% we make the whole thing an ordinary symbol
		\left.\kern-\nulldelimiterspace % automatically resize the bar with \right
		#1 % the function
		\right|_{#2} % this is the delimiter
}}
  
%% Title
\title{On the $\Sigma^1$ and $\Sigma^2$-invariants of Artin groups}
\author{Marcos Escartín-Ferrer}

\begin{document}

 \maketitle

\begin{abstract}
We prove the $\Sigma^1$-conjecture for two families of Artin groups: Artin groups such that there exists a prime number $p$ dividing $\frac{l(e)}{2}$ for every edge $e$ with even label $>2$ and balanced Artin groups. The family of balanced Artin groups extends two previously studied families: the one considered by Kochloukova in \cite{Kochloukova} and the family of coherent Artin groups. We state a conjecture on the $\Sigma^2$-invariant for Artin groups satisfying the $K(\pi,1)$-conjecture. The conjecture is proven to be true for two significant families: $2$-dimensional and coherent Artin groups. In the $2$-dimensional case we are able to compute $\Sigma^n$ for all $n\geq 2$ and to derive finiteness properties of the derived subgroup. 
\end{abstract}
\keywords{Artin groups \and $\Sigma$-Invariants}
The author is partially supported by the Departamento de Ciencia, Universidad y Sociedad del Conocimiento del Gobierno de Aragón (grant code: E22-23R: “Álgebra y Geometría”), and by the Spanish Government PID2021-126254NBI00
\section{Introduction}
The class of Artin groups is one of the most prominent and extensively studied families of groups in geometric group theory. These groups are investigated from algebraic, geometric, and combinatorial perspectives. An Artin group is defined as follows: given a simplicial graph $\Gamma$, i.e. a finite graph with no double edges nor loops, with vertex set $V(\Gamma)$, edge set $E(\Gamma)$ and with a labelling function $l:E(\Gamma)\to\mathbb{N}_{\geq 2}$ the associated \textbf{Artin group} is given by the following presentation:
$$A_\Gamma=\left\langle v\in V(\Gamma)\,\middle\vert\,\underset{l(e)\text{ letters}}{\underbrace{uvu\dots}}=\underset{l(e)\text{ letters}}{\underbrace{vuv\dots}}~\text{ for all }~e=\lbrace u,v\rbrace\in E(\Gamma)\right\rangle$$ 
Artin groups generalize several important families of groups, including free groups, free abelian groups, braid groups and right-angled Artin groups. A key feature of Artin groups is that many of their algebraic properties can often be described in terms of the combinatorial structure of the defining graph $\Gamma$.

Given a finitely generated group $G$, one can define a family of geometric invariants known as the $\Sigma$-invariants or Bieri-Neumann-Strebel-Renz (BNSR) invariants. These were first introduced by Bieri-Neumann-Strebel in \cite{Bieri-Neumann-Strebel} and Bieri-Renz in \cite{Bieri-Renz}. The invariants are denoted as $\Sigma^n(G)$ or $\Sigma^n(G,A)$, where $n\in\mathbb{N}$ and $A$ is a $G$-module. They are defined as subsets of the character sphere $S(G)$ associated to $G$, consisting of equivalence classes of \textbf{characters}, i.e. homomorphisms $\chi:G\to\mathbb{R}$.  The $\Sigma$-invariants encode deep information about the finiteness properties $F_n$ and $FP_n$ of the \textbf{coabelian subgroups} of $G$, i.e. subgroups $N\leq G$ such that $G/N$ is abelian.

In this paper we will study the $\Sigma$-invariants $\Sigma^1$ and $\Sigma^2$ for certain families of Artin groups. Given a character $\chi:A_\Gamma\to\mathbb{R}$, define the \textbf{living subgraph} $\mathrm{Liv}^\chi\subset\Gamma$ by deleting:
\begin{itemize}
	\item All vertices $v\in V(\Gamma)$ with $\chi(v)=0$. (called \textbf{dead vertices})
	\item The interior of every edge $e=\lbrace u,v\rbrace\in E(\Gamma)$ with $\chi(u)+\chi(v)=0$ and even label $l(e)\geq 4$. (called \textbf{dead edges})
\end{itemize}
We say that $\mathrm{Liv}^\chi$ is \textbf{dominating} if  every dead vertex $v\in V(\Gamma)$ is adjacent to some \textbf{living vertex} $w\in V(\Gamma)$, i.e. a vertex $w$ such that $\chi(w)\neq 0$. Motivated by earlier work of Meier-Meiner-VanWyk from \cite{Meier-Meiner-VanWyk} the following conjecture was proposed in \cite{Almeida}:
\begin{conjecture}\label{Conjecture 1.1}
	Let $A_\Gamma$ be an Artin group, then:
	$$\Sigma^1(A_\Gamma)=\lbrace[\chi]\in S(A_\Gamma)\mid\mathrm{Liv}^\chi\text{ is connected and dominant}\rbrace$$
\end{conjecture}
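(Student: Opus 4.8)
Since the statement is a conjecture I will describe the line of attack I would take — and the approach I expect works for the two families named in the abstract — rather than claim a complete argument; I will also flag the step that keeps the general case out of reach. The natural first move is to split the asserted equality into its two inclusions and treat them separately, since they have very different flavours. Throughout I would use the standard reformulation of $\Sigma^1$: for a finitely generated $G$ and a character $\chi$, $[\chi]\in\Sigma^1(G)$ if and only if the full subgraph of a Cayley graph of $G$ spanned by $\{g\mid\chi(g)\geq 0\}$ is connected; for $A_\Gamma$ one works with the standard generators, so the relevant $2$-cells are the braid relators, one per edge of $\Gamma$.

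The inclusion ``$\subseteq$'' — necessity of ``connected and dominant'' — I would expect to hold for every Artin group, and to be the more accessible half. If $\mathrm{Liv}^\chi$ is disconnected or fails to be dominant, one exhibits a splitting of $\ker\chi$ with infinitely many orbits of vertices (for instance, if $\Gamma$ has a cut vertex $v$ separating two living parts whose only common access runs through $\langle v\rangle$ with $\chi(v)=0$, or if a dead vertex has no living neighbour, then $A_\Gamma$ decomposes as a graph of groups on which the restriction of $\chi$ kills an edge group, forcing $\ker\chi$ to be an infinite amalgam of proper pieces). Such a $\ker\chi$ cannot be finitely generated, and a more careful Bass--Serre bookkeeping upgrades this to genuine disconnection of the positive Cayley graph, i.e. $[\chi]\notin\Sigma^1(A_\Gamma)$. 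I would not expect the families' hypotheses to be needed here.

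The content is therefore the reverse inclusion: if $\mathrm{Liv}^\chi$ is connected and dominant then $[\chi]\in\Sigma^1(A_\Gamma)$. The plan is to build a connected positive subcomplex edge by edge and then patch. For the base case — a single edge $e=\{u,v\}$ that is not a dead edge — one checks $[\chi|_{A_e}]\in\Sigma^1(A_e)$: if $l(e)$ is odd the relator abelianizes to $\chi(u)=\chi(v)$ and the relator cell is a $\chi$-``tent'' (monotone up, then down), which is immediate; if $l(e)$ is even one rewrites $A_e\cong\langle x,y\mid[x^{l(e)/2},y]=1\rangle$ and verifies directly that its only non-$\Sigma^1$ characters are the dead-edge ones. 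For the patching, when a subgraph of $\Gamma$ is a tree one iterates the Mayer--Vietoris principle for $\Sigma^1$ of an amalgam over a finitely generated edge group, applied to the vertex groups $\langle v\rangle\cong\mathbb Z$; dead vertices are absorbed using domination, since a dead vertex $v$ can meet a living vertex $w$ only along an even edge (an odd one would force $\chi(w)=\chi(v)=0$) and along that edge the relator $(vw)^{l(e)/2}=(wv)^{l(e)/2}$ is $\chi$-monotone in $w$, so the coset of $\langle v\rangle$ can be slid into the connected core through $w$.

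The main obstacle — and the reason the conjecture remains open in general — appears when $\Gamma$ contains cycles through edges with \emph{even} label $\geq 4$. Then $A_\Gamma$ is not an iterated amalgam of its edge groups over the vertex groups, so the per-edge $\Sigma^1$-certificates produced above need not glue, and the even dihedral subgroups $A_e\cong\mathbb Z\ast_{\mathbb Z}\mathbb Z^2$ interact in a genuinely arithmetic way once $\chi$ has mixed signs. This is exactly where I expect the two hypotheses to enter: a common prime $p$ dividing every $l(e)/2$ controls the index of the edge subgroup $\mathbb Z$ inside the vertex factor $\langle x\rangle\cong\mathbb Z$ of these amalgams (e.g. after passing to a suitable finite-index subgroup, or with mod-$p$ coefficients), making the fillings compatible around each vertex coset; and ``balanced'' should be tailored precisely so that a single $\chi$-monotone choice of fillings for all even relator cells can be made globally. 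Without such input I do not see how to force the even pieces to glue, so I would expect the proof to proceed family by family, as the abstract indicates.
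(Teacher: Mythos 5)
You have the two inclusions exactly backwards, and this misidentification undermines the whole plan. The \emph{sufficiency} direction ``$\supseteq$'' --- if $\mathrm{Liv}^\chi$ is connected and dominant then $[\chi]\in\Sigma^1(A_\Gamma)$ --- is \emph{not} the open content; it is Meier's 1995 theorem (Proposition~\ref{Proposition 1.5}.1 in the paper) and holds for every Artin group. The open direction, and the one the paper is actually attacking, is the \emph{necessity} ``$\subseteq$'': if $[\chi]\in\Sigma^1(A_\Gamma)$ then $\mathrm{Liv}^\chi$ is connected. Even here a weaker statement is known: Meier showed that $[\chi]\in\Sigma^1$ forces $\mathrm{Liv}^\chi_0$ --- the subgraph obtained by deleting only dead \emph{vertices} --- to be connected and dominant. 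What remains open, and what the families' hypotheses are for, is showing connectivity after also deleting the dead \emph{edges}. So your claim that ``$\subseteq$'' is the accessible half holding without any hypothesis, and your decision to spend all of the hard work on ``$\supseteq$'', is the opposite of the true state of affairs.

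Your suggested argument for ``$\subseteq$'' also fails for precisely the case that matters. You propose a Bass--Serre / graph-of-groups splitting of $\ker\chi$, citing cut vertices and dead vertices with no living neighbour. These are indeed the cases Meier already handled. But the subtle case is when $\mathrm{Liv}^\chi_0$ is connected and only dead \emph{edges} disconnect $\mathrm{Liv}^\chi$: then $\Gamma$ itself has no cut vertex and no visible splitting, because the edge $e=\{u,v\}$ with $l(e)\geq 4$ even and $\chi(u)+\chi(v)=0$ is still present in $\Gamma$ and the dihedral subgroup $A_e$ is still there; the edge is ``dead'' only in the bookkeeping graph, not as a subgroup of $A_\Gamma$. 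No amalgam decomposition of $A_\Gamma$ along the living/dead edge dichotomy is available, so the argument does not get off the ground. The paper's route is entirely different: after reductions (Proposition~\ref{Proposition 1.6}) to two complete blocks joined only by dead even edges, it passes to a finite-index kernel $\ker\psi$ with $\psi=(\chi,\varphi)$ for a suitable $\varphi:A_\Gamma\to G$ ($G$ finite abelian), localizes the Salvetti chain complex of the cyclic cover, and shows $H_1(\ker\psi,\mathbb{F})$ is infinite-dimensional by producing an irreducible character $\mu$ of $G$ that kills the cyclotomic-type coefficients $1+\varphi(uv)+\cdots+\varphi(uv)^{l(e)/2-1}$ attached to the dead edges. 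The prime and ``balanced'' hypotheses are exactly what guarantee that such a $\varphi$ and $\mu$ can be chosen consistently across all dead edges. Your speculation about the prime controlling indices of edge subgroups gestures at the right objects but is attached to the wrong inclusion and does not reflect the actual mechanism.
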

Using representation theory for finite abelian groups, we will introduce in Section \ref{Section 7} a technical criterion that allows us to prove the $\Sigma^1$-conjecture for two broad families of Artin groups:

\begin{theorem}\label{Theorem 1.1}
	The following families satisfy the $\Sigma^1$-conjecture:
	\begin{enumerate}
		\item Balanced Artin groups.
		\item Artin groups $A_\Gamma$ such that there exists a prime number $p$ dividing $\frac{l(e)}{2}$ for every $e\in E(\Gamma)$ with $l(e)>2$ even.
	\end{enumerate}
\end{theorem}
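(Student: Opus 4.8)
The plan is to establish, for each of the two families, the single inclusion of Conjecture~\ref{Conjecture 1.1} that is not already available. Indeed, for an arbitrary Artin group the containment $\Sigma^1(A_\Gamma)\subseteq\{[\chi]:\mathrm{Liv}^\chi\text{ is connected and dominant}\}$ is the straightforward inclusion and is known from the Morse-theoretic and combinatorial arguments of Meier--Meinert--VanWyk and Almeida (see \cite{Meier-Meiner-VanWyk,Almeida}). So it suffices to prove the reverse inclusion: whenever $\mathrm{Liv}^\chi$ is connected and dominant, $[\chi]\in\Sigma^1(A_\Gamma)$.

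I would attack this through Novikov homology. Membership $[\chi]\in\Sigma^1(A_\Gamma)$ is equivalent to the vanishing of $H_0$ and $H_1$ of $A_\Gamma$ with coefficients in the Novikov ring $\widehat{\mathbb{Z}A_\Gamma}_\chi$, which one computes from the standard presentation $2$-complex of $A_\Gamma$; here $H_0$ vanishes as soon as $\chi\neq 0$, and the vanishing of $H_1$ amounts to the Fox Jacobian of the Artin relators being ``invertible enough'' over $\widehat{\mathbb{Z}A_\Gamma}_\chi$. Using that $\mathrm{Liv}^\chi$ is connected and dominant, the rows coming from living vertices, odd edges and living edges already have unit leading coefficients, so the whole obstruction is concentrated on the \emph{dead edges}. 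For such an edge $e=\{u,v\}$ with $l(e)=2m$ and $\chi(u)=-\chi(v)\neq 0$, the relator forces $u$ (and $v$) to commute with $z_e=(uv)^m\in\ker\chi$, and its Fox derivatives, pushed into $\widehat{\mathbb{Z}A_\Gamma}_\chi$, carry the factor $1+z_e+z_e^2+\dots+z_e^{m-1}$ lying in the degree--$0$ subring $\mathbb{Z}[\ker\chi]$. The technical criterion of Section~\ref{Section 7} is, in essence, the statement that $[\chi]\in\Sigma^1(A_\Gamma)$ once each such factor is a non--zero--divisor after mapping $\ker\chi$ onto a suitable finite abelian group; decomposing the resulting group algebra into characters $\omega$ — this is where representation theory of finite abelian groups enters — reduces the problem to the numerical requirement $1+\omega(z_e)+\dots+\omega(z_e)^{m-1}\neq 0$, i.e. that $\omega(z_e)$ is not a nontrivial root of unity of order dividing $m=\tfrac{l(e)}{2}$.

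With the criterion in place the two families are handled separately. For family~(2), a single prime $p$ with $p\mid\tfrac{l(e)}{2}$ for every dead-edge candidate lets me run the criterion through the mod--$p$ (and $p$--local) representation theory only, where the exponent sums attached to the different dead edges are controlled simultaneously by $p$, so the criterion is met for all of them at once. For family~(1), the defining data of a balanced Artin group supplies a coherent weighting of $V(\Gamma)$ — subsuming both the family of Kochloukova in \cite{Kochloukova} and the coherent Artin groups — under which, for every $\chi$ with $\mathrm{Liv}^\chi$ connected and dominant, each $z_e$ above maps to an element of order coprime to $\tfrac{l(e)}{2}$ in every relevant finite abelian quotient; equivalently, the weighting produces an explicit finite generating set of $\ker\chi$ on discrete characters, and one passes to arbitrary $\chi$ via the openness of $\Sigma^1$ together with the (easily checked) openness of the ``connected and dominant'' locus, since a small perturbation of $\chi$ can only resurrect dead vertices and dead edges.

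The main obstacle I anticipate is the \emph{simultaneous} vanishing of the dead-edge obstructions. A single dead edge never obstructs finite generation of the kernel, but when several dead edges share vertices their relators interact inside one $\mathbb{Z}[\ker\chi]$--module, and a poorly matched collection of labels can genuinely destroy the required invertibility; the two hypotheses of Theorem~\ref{Theorem 1.1} are precisely the structural inputs — a common prime in one case, a global balanced weighting in the other — that force all these contributions into a single controllable summand. A secondary difficulty is making the reduction to the dead-edge criterion robust for non-discrete characters, where the degree--$0$ subring $\mathbb{Z}[\ker\chi]$ is noncommutative and far from a field; overcoming this is the technical heart of Section~\ref{Section 7}.
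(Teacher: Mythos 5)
Your proposal inverts the hard and easy directions of the $\Sigma^1$-conjecture, and this undermines the whole strategy. You assert that $\Sigma^1(A_\Gamma)\subseteq\{[\chi]:\mathrm{Liv}^\chi\text{ connected and dominant}\}$ is the ``straightforward inclusion'' and set out to prove the reverse. In fact the reverse is Meier's 1995 result (Proposition~\ref{Proposition 1.5}.1 in the paper, already holding for every Artin group), while the inclusion you call known is precisely the open conjecture: Meier only proved that $[\chi]\in\Sigma^1(A_\Gamma)$ forces $\mathrm{Liv}^\chi_0$ (the subgraph that remembers dead \emph{vertices} but not dead \emph{edges}) to be connected and dominant. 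What Theorem~\ref{Theorem 1.1} must supply is the implication $\mathrm{Liv}^\chi$ disconnected $\Rightarrow [\chi]\notin\Sigma^1(A_\Gamma)$ for the two families. Consequently the goal is to exhibit \emph{infinite-dimensional} homology of an Artin kernel, not to prove Novikov homology \emph{vanishes}, and the intermediate criterion has the opposite sign from the one you describe.

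This sign error propagates through your reduction. You reduce to the numerical requirement that $1+\omega(z_e)+\cdots+\omega(z_e)^{m-1}\neq 0$, i.e.\ that $\omega(z_e)$ is \emph{not} a nontrivial $m$-th root of unity. The paper's Theorem~\ref{Corollary 4.7} asks for exactly the opposite: one must construct a finite quotient $\varphi:A_\Gamma\to G$ and a single complex character $\mu$ of $G$ so that, for \emph{every} dead edge $e=\{u,v\}$ between the two pieces of $\mathrm{Liv}^\chi$, $\mu(\varphi(uv))$ \emph{is} a nontrivial $\tfrac{l(e)}{2}$-th root of unity; then the ideal generated by the cyclotomic-type factors is proper, the normalized first homology is nonzero, and Lemma~\ref{Lemma 3.2} gives $[\chi]\notin\Sigma^1(A_\Gamma)$. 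The structural hypotheses in Theorem~\ref{Theorem 1.1} --- a common prime $p$ dividing each $\tfrac{l(e)}{2}$, or a balanced colouring with a consistent parity map (Lemma~\ref{ParityLemma}) used to define $\varphi$ along a spanning tree --- are precisely what make it possible to find such a $\mu$ \emph{uniformly} over all dead edges. Under your reading, those hypotheses would make the needed non-vanishing fail (since $p\mid m$ forces the character sum to vanish mod $p$), so your application of the criterion is internally inconsistent. There is also a quantifier slip: ``each factor is a non-zero-divisor'' demands $\omega(p_e)\neq 0$ for all $\omega$ and all $e$, which is strictly stronger than the ideal being improper; the paper's Corollary~\ref{Corllary4.3} shows the ideal is proper iff a \emph{single} $\mu$ simultaneously kills all the $p_e$, and that simultaneity is the actual content of the balanced/prime hypotheses. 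Finally, for the reduction to discrete characters the paper uses Proposition~\ref{Proposition 2.9} (Almeida--Kochloukova), which works in the non-membership direction; your density-plus-openness argument would only help with the membership direction, which is already settled.
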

The definition of balanced Artin groups is given in Section \ref{Section 2.1}. This family generalizes two previously studied classes:
\begin{enumerate}
	\item Graphs $\Gamma$ in which $l(e)$ is even for all $e\in E(\Gamma)$ and every closed reduced path with labels strictly greater than $2$ has odd length.
	\item Coherent Artin groups (see Theorem \ref{Theorem 2.1}).
\end{enumerate}

The family in $1.$ was already known to satisfy the $\Sigma^1$-conjecture (see \cite{Kochloukova}).

In higher dimensions, Escartín-Martinez provided in \cite{Escartin} a sufficient condition for a character to lie in $\Sigma^n$ under the assumption that the Artin group satisfies the $K(\pi,1)$-conjecture. While the condition is somewhat technical, we obtain the following more tractable criterion in the case $n=2$:
\begin{theorem}\label{Theorem 1.2 } Let $A_\Gamma$ be an Artin group satisfying the $K(\pi,1)$-conjecture and let $\chi:A_\Gamma\to\mathbb{R}$ a character. Suppose:
	\begin{enumerate}
		\item For any edge $e=\lbrace v,w\rbrace\in E(\Gamma)$ such that either $\chi(v)=\chi(w)=0$ or $e$ is dead, there is some living $u\in V(\Gamma)$ such that $A_{\{v,w,u\}}$ is spherical,
		\item  For every dead vertex $v\in V(\Gamma)$, the spherical link $\mathrm{slk}_{\mathrm{Liv}^\chi,v}^\chi$ is non-empty and connected,
		\item The simplicial complex obtained from $\mathrm{Liv}^\chi$ by attaching $2$-cells along all spherical triangles except those of type $\mathbb{B}_3$ with a $3$-dead edge is $1$-acyclic (resp simply connected).
	\end{enumerate} Then $[\chi]\in\Sigma^2(A_\Gamma,\mathbb{Z})$  (resp $[\chi]\in\Sigma^2(A_\Gamma)$).
\end{theorem}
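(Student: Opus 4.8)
The plan is to prove this by reducing it to the general sufficient condition for membership in $\Sigma^n(A_\Gamma,\mathbb{Z})$ (resp. $\Sigma^n(A_\Gamma)$) established by Escartín-Martínez in \cite{Escartin}, which applies here precisely because we assume the $K(\pi,1)$-conjecture. Under that assumption the Salvetti complex $S_\Gamma$ is a $K(A_\Gamma,1)$ carrying exactly one cell of dimension $|T|$ for every spherical subset $T\subseteq V(\Gamma)$; the character $\chi$ lifts to a height function on the universal cover $\widetilde S_\Gamma$, and by the description of $\Sigma^n$ through sublevel filtrations (Bieri-Renz) together with the Morse-theoretic analysis of that height function, the class $[\chi]$ lies in the homological (resp. homotopical) $\Sigma^2$ exactly when the height filtration of $\widetilde S_\Gamma$ is essentially $1$-acyclic (resp. essentially $1$-connected). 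Unwound for $n=2$, the criterion of \cite{Escartin} presents this as a short list of local conditions on the descending links at the cells where $\chi$ degenerates --- organized by the dimension of those cells --- together with one global $1$-acyclicity (resp. simple connectivity) condition on a complex assembled from $\mathrm{Liv}^\chi$; since controlling a descending link up to its $2$-skeleton only involves spherical subsets of rank at most $3$, the conditions that survive are precisely hypotheses $(1)$, $(2)$ and $(3)$, and the proof becomes the matching of each hypothesis against the corresponding clause of \cite{Escartin}.

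First I would handle the two local conditions. At a dead vertex $v$, the living directions of the descending link are recorded by the spherical link $\mathrm{slk}_{\mathrm{Liv}^\chi,v}^\chi$, and the $n=2$ form of the criterion asks exactly that this link be $0$-connected, i.e. non-empty and connected: this is hypothesis $(2)$. For an edge $e=\{v,w\}$ that is either doubly dead or dead --- the two ways $e$ can fail to survive in $\mathrm{Liv}^\chi$ for reasons intrinsic to $e$ itself --- the $2$-cell of $S_\Gamma$ indexed by $e$ (there is one for every edge, since the subgroup generated by the endpoints of an edge is always spherical) carries a descending link that may a priori be empty or disconnected; the criterion repairs this by requiring a spherical cell of higher rank through $e$ that meets a living vertex, and for $n=2$ a single living $u$ with $A_{\{v,w,u\}}$ spherical suffices, which is hypothesis $(1)$. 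While doing this I would also extract the structural fact behind the exceptional clause of $(3)$: the $2$-cell of a spherical triangle $\{u,v,w\}$ contributes non-trivially to the descending-link analysis whatever its irreducible type --- $\mathbb{A}_3$, $\mathbb{B}_3$, $\mathbb{H}_3$, or a reducible product $\mathbb{A}_1\times I_2(m)$ --- with the single exception of a $\mathbb{B}_3$-triangle carrying a $3$-dead edge, where the attaching map of that cell is already null-homotopic for combinatorial reasons internal to the $\mathbb{B}_3$ braid relation.

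With the local obstructions cleared, the only obstruction left to essential $1$-acyclicity (resp. essential $1$-connectivity) of the filtration is the $\mathbb{Z}$-homology (resp. the homotopy type) of the complex obtained from $\mathrm{Liv}^\chi$ by attaching $2$-cells along exactly the spherical triangles that do contribute --- all of them except the excluded $\mathbb{B}_3$ ones --- which is the model \cite{Escartin} produces for a cofinal stratum of the height filtration once the bad descending links have been healed. Demanding that this complex be $1$-acyclic then gives $[\chi]\in\Sigma^2(A_\Gamma,\mathbb{Z})$, and demanding it be simply connected gives the sharper $[\chi]\in\Sigma^2(A_\Gamma)$; the two cases run through the same argument and differ only in whether the homological or the homotopical input is fed into \cite{Escartin}, so combining the local verifications with the global one finishes the deduction.

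The step I expect to be the main obstacle is the structural analysis in the middle paragraph: pinning down exactly which $2$-cells of $S_\Gamma$ contribute to the descending link and proving this set equals the triangle set appearing in hypothesis $(3)$. Concretely one has to work through the vertex links of the rank-$3$ spherical Salvetti complexes, track how $\chi$ restricts to each irreducible factor, and check both that retaining a $\mathbb{B}_3$-triangle with a $3$-dead edge would impose a spurious relation --- so it must be discarded --- and that every other spherical triangle must be retained, while spherical cells of rank $\geq 4$ play no role when $n=2$. Once this case analysis is settled, the rest of the proof follows formally from the criterion of \cite{Escartin}.
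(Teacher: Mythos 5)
Your approach — reducing to the general sufficient condition of \cite{Escartin} and specializing it to $n=2$ — is exactly what the paper does; the paper itself gives no further proof, simply asserting Theorem \ref{Theorem 1.2 } as the $n=2$ form of that criterion. Your sketch of how the three hypotheses line up with the clauses of \cite{Escartin}'s criterion is accurate, and the technical step you flag as the main obstacle (pinning down which rank-$3$ Salvetti cells contribute to the descending-link analysis, and why the $\mathbb{B}_3$ triangles with a $3$-dead edge are excluded) is precisely the verification that the paper defers to \cite{Escartin} rather than carrying out here.
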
 
For the definitions of the spherical link $\mathrm{slk}_{\mathrm{Liv}^\chi,v}^\chi$ and $3$-dead edges see section \ref{Section 2.4}.
 We conjecture that the converse holds:
\begin{conjecture}
	Let $A_\Gamma$ be an Artin group satisfying the $K(\pi,1)$-conjecture and let $\chi:A_\Gamma\to\mathbb{R}$ be a character. Then, $[\chi]\in\Sigma^2(A_\Gamma,\mathbb{Z})$ (resp $[\chi]\in\Sigma^2(A_\Gamma)$) if and only if the conditions of Theorem \ref{Theorem 1.2 } hold.
\end{conjecture}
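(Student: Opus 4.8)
\noindent\emph{Strategy.} The forward implication---that conditions $(1)$--$(3)$ force $[\chi]\in\Sigma^2$---is exactly the preceding theorem (a $2$-dimensional distillation of the sufficient condition of Escartín--Martinez in \cite{Escartin}), so the content of the conjecture is the converse: assuming $[\chi]\in\Sigma^2(A_\Gamma,\mathbb{Z})$ (resp.\ $[\chi]\in\Sigma^2(A_\Gamma)$), recover $(1)$ and $(2)$ together with the homological (resp.\ homotopical) form of $(3)$. I would argue geometrically. Since $A_\Gamma$ satisfies the $K(\pi,1)$-conjecture, the Salvetti complex $S_\Gamma$ is a finite classifying space and $\chi$ extends to a Morse function on its universal cover whose ascending and descending links are assembled explicitly from the spherical parabolic subgroups through each generator. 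By the Morse-theoretic description of the $\Sigma$-invariants, a failure of $[\chi]$ to lie in $\Sigma^2(A_\Gamma)$ (resp.\ $\Sigma^2(A_\Gamma,\mathbb{Z})$) is witnessed by some ascending or descending link that is not $1$-connected (resp.\ not $1$-acyclic). The plan is to show that violating any one of $(1)$--$(3)$ forces such a defective link, and---this is where the combinatorics of Artin relations enters---that the resulting defect genuinely obstructs $\Sigma^2$ and not merely the sufficient criterion.

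The necessity of $(1)$ and $(2)$ is a package of local obstructions. From $\Sigma^2(A_\Gamma)\subseteq\Sigma^2(A_\Gamma,\mathbb{Z})\subseteq\Sigma^1(A_\Gamma)$ together with the easy half of Conjecture \ref{Conjecture 1.1} (membership in $\Sigma^1$ forces $\mathrm{Liv}^\chi$ connected and dominating) one already gets the $1$-dimensional shadow of $(1)$ and $(2)$. For the genuinely $2$-dimensional part I would, for each edge $e=\{v,w\}$ of the kind in $(1)$, inspect the link of $S_\Gamma$ around the polygonal $2$-cell carrying the Artin relator of $e$: if no living $u$ makes $A_{\{v,w,u\}}$ spherical, this $2$-cell lies in the boundary of no $3$-cell, an essential loop in the link survives, and $[\chi]\notin\Sigma^2(A_\Gamma,\mathbb{Z})$. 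Likewise, if for a dead vertex $v$ the spherical link $\mathrm{slk}^\chi_{\mathrm{Liv}^\chi,v}$ is empty or disconnected, the link picks up a free factor or an extra $H_1$-class. Because parabolic subgroups of $A_\Gamma$ need not be retracts one must carry this out inside $S_\Gamma$ directly rather than by restricting $\chi$; what makes it feasible is that the sub-Salvetti complexes of spherical parabolics of rank $\le 3$ and of the $2$-dimensional parabolics $A_{\{v,w\}}$ are aspherical with transparent cell structure.

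Granting $(1)$ and $(2)$, the local pictures glue. A Mayer--Vietoris / van Kampen patching over $\mathrm{Liv}^\chi$ identifies, in the range relevant to $\Sigma^2$, the ascending link with the $2$-complex $X$ obtained from $\mathrm{Liv}^\chi$ by coning off exactly the allowed spherical triangles. The exclusion of $\mathbb{B}_3$-triangles carrying a $3$-dead edge must be read off from a direct inspection of the $\mathbb{B}_3$ braid relations against the Morse function: for such a triangle the $2$-cell of $S_\Gamma$ that would contribute a disc to $X$ instead degenerates under $\chi$ and provides no filling---this is the one case that has to be checked by hand. One then concludes that $H_1(X)\neq 0$ destroys essential $1$-acyclicity, so $[\chi]\notin\Sigma^2(A_\Gamma,\mathbb{Z})$, and that $\pi_1(X)\neq 1$ destroys essential $1$-connectivity, so $[\chi]\notin\Sigma^2(A_\Gamma)$; with the preceding theorem this yields the biconditional.

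In the two families where the conjecture is within reach the argument shortens. For $2$-dimensional $A_\Gamma$ one substitutes the explicit computation of $\Sigma^n(A_\Gamma)$, $n\ge 2$, carried out elsewhere in this paper and checks it against $(1)$--$(3)$, which collapse considerably because no three distinct vertices span a spherical parabolic. For coherent $A_\Gamma$ one argues by induction along the structural decomposition of Theorem \ref{Theorem 2.1}, reducing to pieces of small complexity where $\Sigma^2$ is already understood. I expect the main obstacle to be the third paragraph: controlling $\pi_1$, and not just $H_1$, of the ascending links uniformly and disposing correctly of the exceptional $\mathbb{B}_3$ configuration---the converse direction of Bestvina--Brady Morse theory is not formal, and excluding an accidental verification of the $\Sigma^2$ finiteness property when a link is defective uses the structure of the Artin relations rather than just the combinatorics of $\mathrm{Liv}^\chi$. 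A secondary point is that the converse in full generality would also require the necessity half of the $\Sigma^1$-conjecture for arbitrary $A_\Gamma$, which is why the clean statement is, for now, a theorem only for the $2$-dimensional and coherent families.
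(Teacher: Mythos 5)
This is a \emph{conjecture}, not a theorem: the paper does not prove it in full generality, and you correctly recognize this in your closing sentences. Your write-up is therefore best read as a strategy sketch for an open problem, and as such it is honest about where the difficulties lie. The remarks that follow compare your proposed route with what the paper actually establishes, namely partial necessity results (Corollary \ref{Corollary 5.6}, Corollary \ref{2-dimensional.2}) and the full converse for the $2$-dimensional and coherent families.

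Your strategy is a Bestvina--Brady style analysis of ascending and descending links of a $\chi$-equivariant Morse function on the universal cover of the Salvetti complex; you argue that a defective link must both exist when $(1)$--$(3)$ fail and genuinely obstruct $\Sigma^2$. The paper takes a different, more algebraic route for the necessity direction. Working with the Davis--Leary chain complex $C_*(\widetilde{\mathrm{Sal}(\Gamma)})$ tensored down to the infinite cyclic cover, and when needed further twisted by an auxiliary finite abelian quotient $\psi=(\chi,\varphi)$, the paper locates an explicit $2$-cycle in $\ker(\partial_2)$ that cannot lie in $\mathrm{Im}(\partial_3)$ for dimension reasons, forcing $H_2(\ker\psi,\mathbb{F})$ to be infinite-dimensional and invoking Lemma \ref{Lemma 3.2}. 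The key trick is the choice of $\varphi$: for an odd-labelled bad edge, a cyclotomic-polynomial argument over $\mathbb{F}_q$ (Lemma \ref{Lemma  4.2}) kills the boundary of $\sigma_e^\psi$; for even labels, a field of characteristic dividing $\tfrac{l(e)}{2}$ does the same. This sidesteps any direct contact with $\pi_1$ of links, at the cost of only detecting the homological invariant $\Sigma^2(A_\Gamma,\mathbb{Z})$ rather than $\Sigma^2(A_\Gamma)$ --- precisely the distinction you flag as the hard part of your plan. Your dependence on the necessity half of the $\Sigma^1$-conjecture is also real and is in fact why the paper's necessity of condition $(2)$ (via Lemma \ref{Lemma  4.2}) proceeds through Proposition \ref{Proposition 1.5}.2 rather than through a full $\Sigma^1$ characterization.

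Two concrete cautions on your sketch. First, the claim that a defective ascending or descending link \emph{genuinely} obstructs $\Sigma^2$ (your second paragraph) is exactly the non-formal step; the paper makes this rigorous not by ruling out a homotopy-theoretic repair of the link but by producing a concrete unkillable homology class in the cyclic cover, which is a fundamentally different certificate of non-membership. You would need to either replicate this algebraic detection or prove a genuine converse to the Morse lemma in this context, which is not available. Second, the exceptional $\mathbb{B}_3$ case with a $3$-dead edge is indeed delicate, but in the paper it enters only in the \emph{sufficiency} side of Theorem \ref{Theorem 1.2 } (inherited from \cite{Escartin}); the necessity arguments in the paper never need to analyze this configuration, precisely because they are run in families (even with a fixed prime divisor, $2$-dimensional, coherent) where such triangles do not occur or are controlled by a chordality argument. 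If you aim for the general conjecture you will, as you anticipate, have to handle it head-on.
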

We refer to this as the \textbf{(homotopical) $\Sigma^2$-conjecture for Artin groups}. We prove the conjecture for coherent and $2$-dimensional Artin groups. The characterization in the $2$-dimensional case is particularly simple:
\begin{theorem}\label{Theorem 1.4}
	Let $A_\Gamma$ be a $2$-dimensional Artin group and $0\neq\chi:A_\Gamma\to\mathbb{Z}$ a character. Then, $\Sigma^n(A_\Gamma)=\Sigma^m(A_\Gamma,\mathbb{Z})$ for all $m,n\geq 2$ and $[\chi]\in\Sigma^2(A_\Gamma)$ if and only if:
	\begin{enumerate}
		\item No edge $e=\lbrace v,w\rbrace\in E(\Gamma)$ satisfies $\chi(v)=\chi(w)=0$ or $e$ is dead,
		\item Every dead vertex is adjacent to a unique living vertex,
		\item  $\mathrm{Liv}^\chi$ is a tree.
	\end{enumerate}
\end{theorem}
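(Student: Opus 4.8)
The proof strategy divides into two independent halves: the equality $\Sigma^n(A_\Gamma) = \Sigma^m(A_\Gamma,\mathbb{Z})$ for all $m,n\geq 2$, and the explicit characterization of $\Sigma^2$ for a single character $\chi$. For the second half, the plan is to specialize Theorem~\ref{Theorem 1.2 } to the $2$-dimensional setting (where it applies because $2$-dimensional Artin groups satisfy the $K(\pi,1)$-conjecture) and check that conditions (1)--(3) of Theorem~\ref{Theorem 1.2 } collapse to conditions (1)--(3) of the present statement, and moreover that this sufficient condition is here also necessary. In dimension $2$, the only spherical subgroups $A_{\{v,w,u\}}$ on three vertices are of type $\mathbb{A}_3$, $\mathbb{B}_3$, $\widetilde{\mathbb{A}}_2$, or $\mathbb{A}_1\times\mathbb{A}_1\times\mathbb{A}_1$ realizations---wait, more carefully: a $2$-dimensional Artin group has no spherical parabolic on three vertices at all unless $\Gamma$ restricted to those vertices has a non-spherical subgraph. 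So condition~(1) of Theorem~\ref{Theorem 1.2 } forces that there is simply \emph{no} edge $e$ with $\chi(v)=\chi(w)=0$ or $e$ dead, giving condition~(1) here. I would then show the spherical-link condition~(2) becomes ``every dead vertex is adjacent to a unique living vertex'' because in a $2$-dimensional complex a non-empty connected spherical link of a vertex that sits in no spherical triangle is a single point, and finally that condition~(3) --- attaching $2$-cells along spherical triangles of $\mathrm{Liv}^\chi$, of which there are none in the $2$-dimensional case after the edge conditions --- reduces to ``$\mathrm{Liv}^\chi$ is simply connected,'' which for a graph means it is a tree (assuming also connectedness, which follows from $[\chi]\in\Sigma^1$, itself a consequence of $[\chi]\in\Sigma^2$ together with Conjecture~\ref{Conjecture 1.1}, known in the $2$-dimensional case).

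For necessity, I would use the known description of $\Sigma^1$ for $2$-dimensional Artin groups (Conjecture~\ref{Conjecture 1.1} is a theorem here): if $[\chi]\in\Sigma^2\subseteq\Sigma^1$ then $\mathrm{Liv}^\chi$ is connected and dominating, and I must promote this to the three sharper conditions. The failure of condition~(1), (2), or (3) should be detected by exhibiting an explicit parabolic or retract whose finiteness properties obstruct membership in $\Sigma^2$: a dead edge or an edge with both endpoints dead yields a $\mathbb{Z}$ or Klein-bottle-group type quotient/retract that is not $FP_2$ over the kernel; a dead vertex adjacent to two living vertices produces, via the link, a character on a rank-$2$ free or related subgroup failing $\Sigma^2$; and a cycle in $\mathrm{Liv}^\chi$ yields a surface-subgroup or similar obstruction. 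The cleanest route is to invoke the $\Sigma^2$ results already available for right-angled Artin groups and for $2$-generated and $3$-generated Artin subgroups, and to use the fact that $\Sigma$-invariants are monotone under retractions onto standard parabolic subgroups together with the restriction of $\chi$.

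For the stabilization $\Sigma^n(A_\Gamma)=\Sigma^m(A_\Gamma,\mathbb{Z})$ for all $m,n\geq 2$, the plan is: since $A_\Gamma$ is $2$-dimensional it has geometric (and cohomological) dimension $2$, so the Salvetti complex is a $2$-dimensional $K(\pi,1)$; the descending chain $\Sigma^1\supseteq\Sigma^2\supseteq\Sigma^3\supseteq\cdots$ and $\Sigma^1\supseteq\Sigma^2(-,\mathbb{Z})\supseteq\cdots$ must stabilize at level $2$ by a general principle, namely that for a group of type $F_2$ with a $2$-dimensional classifying space the higher homotopical and homological $\Sigma$-invariants coincide with $\Sigma^2$ once a character's kernel is $FP_2$ (the relevant chain complex of the covering being already exact in the required range, so no further obstruction appears in degrees $\geq 2$). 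I would make this precise by showing that for $[\chi]\in\Sigma^2(A_\Gamma,\mathbb{Z})$, the $\chi$-equivariant cellular chain complex of the universal cover, viewed over the Novikov ring $\widehat{\mathbb{Z}A_\Gamma}_\chi$, is a complex of free modules concentrated in degrees $0,1,2$ which is exact in degrees $0$ and $1$; exactness in the top degree is automatic for a $2$-complex once the lower exactness holds and the Euler characteristic/rank bookkeeping is used, giving $FP_\infty$ of the relevant Novikov homology and hence $[\chi]\in\Sigma^m(A_\Gamma,\mathbb{Z})$ for all $m$; the homotopical statement then follows since $\Sigma^2(A_\Gamma)$ and $\Sigma^2(A_\Gamma,\mathbb{Z})$ agree here by the characterization just proved (both equal the set cut out by conditions (1)--(3)).

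\medskip
\noindent\emph{Main obstacle.} The delicate point is the passage from Theorem~\ref{Theorem 1.2 }'s somewhat opaque conditions (especially the clause about spherical triangles ``except those of type $\mathbb{B}_3$ with a $3$-dead edge'' and the definition of the spherical link $\mathrm{slk}^\chi$) to the transparent graph-theoretic trichotomy, and in particular verifying that in the $2$-dimensional case the excluded $\mathbb{B}_3$-triangles never arise so that the attached complex is literally $\mathrm{Liv}^\chi$ with no $2$-cells; I also expect the necessity direction --- ruling out each bad configuration by producing a concrete finiteness obstruction --- to require the most case analysis, since one must handle separately a dead vertex with $\geq 2$ living neighbours, a dead edge, an edge with two dead endpoints, and a nontrivial cycle in $\mathrm{Liv}^\chi$, each time locating a parabolic subgroup on at most four vertices on which the restricted character demonstrably fails $\Sigma^2$.
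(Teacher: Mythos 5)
Your outline for sufficiency and for the stabilization $\Sigma^n(A_\Gamma)=\Sigma^m(A_\Gamma,\mathbb{Z})$ matches the paper's approach in spirit: specialize Theorem~\ref{Theorem 1.2 } after observing that in a $2$-dimensional group there are no spherical triangles, so the spherical link of a dead vertex is a discrete set of living neighbours (hence connected iff a single point) and the $2$-complex of condition~$3$ is just $\mathrm{Liv}^\chi$, whose simple connectivity for a graph means being a tree. One small flaw in your stabilization argument: exactness of a length-$2$ free Novikov complex in the top degree is \emph{not} automatic from exactness in degrees $0,1$ (e.g.\ $R\xrightarrow{0}R\xrightarrow{\mathrm{id}}R$); the correct mechanism, which the paper simply cites, is the Bieri--Renz theorem that $\Sigma^m(G,\mathbb{Z})=\Sigma^n(G,\mathbb{Z})$ whenever $n=\operatorname{cd}G\le m$, plus the identity $\Sigma^n(G)=\Sigma^n(G,\mathbb{Z})\cap\Sigma^2(G)$ and the coincidence $\Sigma^2(A_\Gamma)=\Sigma^2(A_\Gamma,\mathbb{Z})$ which falls out of the characterization once it is proved (Theorem~\ref{Theorem 1.2 } gives the homotopical version as sufficient; the obstructions used for necessity are all homological).

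The genuine gap is the necessity of condition~$3$. You propose to rule out a cycle in $\mathrm{Liv}^\chi$ by retracting to a standard parabolic and invoking known $\Sigma^2$ results on small Artin groups, together with a claimed ``monotonicity of $\Sigma$-invariants under retractions onto standard parabolic subgroups.'' This does not work as stated. First, the retraction $A_\Gamma\twoheadrightarrow A_{\Gamma'}$ that kills the extra generators is only well defined when every edge joining $\Gamma'$ to $\Gamma\setminus\Gamma'$ has an even label; a cycle in $\mathrm{Liv}^\chi$ may involve odd-labelled edges and may meet the rest of $\Gamma$ via odd edges, so the retraction need not exist. Second, even when an epimorphism $\pi\colon A_\Gamma\twoheadrightarrow A_{\Gamma'}$ with $\chi=\chi'\circ\pi$ does exist, the implication $[\chi']\notin\Sigma^2(A_{\Gamma'})\Rightarrow[\chi]\notin\Sigma^2(A_\Gamma)$ is not available in general at the level of $\Sigma^2$; the paper's Proposition~\ref{Proposition 1.6} records this only for $\Sigma^1$, and to push it to $\Sigma^2$ one would need control on the finiteness type of $\ker\pi$, which you do not supply. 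The paper instead does something more direct and quite different from anything in your sketch: since the group is $2$-dimensional, $\partial_n=0$ for $n\ge3$, so $H_2(\ker\chi,\mathbb{F})=\ker\partial_2$; one then writes the boundary coefficients $p_i,q_i$ along a reduced cycle $e_1,\dots,e_n$ of $\mathrm{Liv}^\chi$ using Formulas~\ref{Formula2}--\ref{Formula3}, observes that $\prod p_i=\prod q_i$ (because the cycle closes up and $\chi$ is constant along odd-labelled edges, while each even-labelled edge contributes symmetric factors $1-t^{\chi(v_i)}$ and $1-t^{\chi(v_{i+1})}$ that cancel in the cyclic product), and hence solves for nonzero $c_i$ with $\sum c_i\sigma_{v_iv_{i+1}}\in\ker\partial_2$. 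That produces an infinite-dimensional $H_2(\ker\chi,\mathbb{F})$ and kills $[\chi]$. Without this explicit cycle-level calculation (or an equally concrete substitute), the necessity direction of condition~$3$ is not established by your proposal.
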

As an application, we determine when the derived subgroup of a $2$-dimensional Artin group is finitely presented:
\begin{corollary}\label{Corollary 1.5}
	Let $A_\Gamma$ be a $2$-dimensional Artin group. Then $\Sigma^2(A_\Gamma)=S(A_\Gamma)$ if and only if $\Gamma$ is a tree with odd labels. In particular, $A_\Gamma'$ is finitely presented if and only if $\Gamma$ is a tree with odd labels, in which case $A_\Gamma'$ is of type $F_\infty$.
\end{corollary}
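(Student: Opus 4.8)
The plan is to derive Corollary~\ref{Corollary 1.5} directly from Theorem~\ref{Theorem 1.4} by unwinding what it means for \emph{every} nonzero character to lie in $\Sigma^2(A_\Gamma)$. First I would fix the standing hypothesis that $A_\Gamma$ is $2$-dimensional and observe that, since $\Sigma^2$ is closed in $S(A_\Gamma)$ and the rational characters are dense, $\Sigma^2(A_\Gamma)=S(A_\Gamma)$ is equivalent to every nonzero discrete character $\chi:A_\Gamma\to\mathbb{Z}$ satisfying the three conditions of Theorem~\ref{Theorem 1.4}. The strategy is then to test this against two carefully chosen families of characters: the ``generic'' character that is nonzero on every vertex, and the ``single vertex'' characters supported away from one vertex.

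The key steps, in order. \emph{(i)} Suppose $\Sigma^2(A_\Gamma)=S(A_\Gamma)$ and apply the conditions to a character $\chi$ with $\chi(v)\neq 0$ for all $v\in V(\Gamma)$. Then there are no dead vertices, so condition~(1) of Theorem~\ref{Theorem 1.4} forces $\Gamma$ to have no dead edges; choosing $\chi$ generically (e.g. $\mathbb{Z}$-linearly independent values) makes $\chi(u)+\chi(v)\neq 0$ automatic, so this gives no constraint yet, but condition~(3) says $\mathrm{Liv}^\chi=\Gamma$ must be a tree. Hence $\Gamma$ is a tree. \emph{(ii)} Now that $\Gamma$ is a tree, for each edge $e=\{u,v\}$ with even label pick the character $\chi_e$ with $\chi_e(u)=1,\ \chi_e(v)=-1$ and $\chi_e=0$ elsewhere; if $l(e)\geq 4$ even then $e$ becomes a dead edge, violating condition~(1). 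Therefore every edge of $\Gamma$ has label either odd or equal to $2$; but combined with \emph{(iii)} below the label $2$ case must also be excluded. \emph{(iii)} For the label-$2$ edges: take the character that is $1$ on one endpoint and $0$ on the other (and $0$ elsewhere); this makes one endpoint a dead vertex adjacent to the living endpoint, which is fine for condition~(2), but since $\Gamma$ is a tree one can choose a character killing exactly the two endpoints of a label-$2$ edge while being generic elsewhere, making that edge satisfy $\chi(v)=\chi(w)=0$ and violating condition~(1). Thus all labels are odd, and $\Gamma$ is a tree with odd labels. \emph{(iv)} Conversely, if $\Gamma$ is a tree with odd labels, then for any nonzero $\chi$ there are no dead edges (odd labels) and $\mathrm{Liv}^\chi$ is a subforest of a tree; one checks $\mathrm{Liv}^\chi$ is in fact connected (a subtree) because a dead vertex of the tree, having all incident edges with odd label, is joined by living edges to its neighbours, and using that $\chi\ne 0$ plus connectedness of $\Gamma$ one propagates to see $\mathrm{Liv}^\chi$ is a tree containing every living vertex and no edge has both endpoints dead (a degree argument on the tree). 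Verifying conditions~(1)--(3) of Theorem~\ref{Theorem 1.4} then shows $[\chi]\in\Sigma^2$, so $\Sigma^2(A_\Gamma)=S(A_\Gamma)$.

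For the finiteness-property statement: $\Sigma^2(A_\Gamma)=S(A_\Gamma)$ is by definition equivalent (via the Bieri--Renz criterion) to the derived subgroup $A_\Gamma'=[A_\Gamma,A_\Gamma]$, the kernel of the universal character $A_\Gamma\to A_\Gamma^{\mathrm{ab}}\otimes\mathbb{R}$, being of type $FP_2$; combined with $A_\Gamma'$ being finitely generated (which follows from $\Sigma^1(A_\Gamma)=S(A_\Gamma)$, itself implied by condition~(3) here via Theorem~\ref{Theorem 1.1} or directly from the $\Sigma^1$-description) this upgrades to $F_2$, i.e. $A_\Gamma'$ is finitely presented. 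Finally, when $\Gamma$ is a tree with odd labels the equality $\Sigma^n(A_\Gamma)=\Sigma^m(A_\Gamma,\mathbb{Z})$ for all $m,n\geq 2$ from Theorem~\ref{Theorem 1.4} gives $\Sigma^n(A_\Gamma)=S(A_\Gamma)$ for all $n$, whence $A_\Gamma'$ is of type $F_\infty$ by Bieri--Renz.

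The main obstacle I anticipate is step~\emph{(iv)}: carefully checking that for a tree with odd labels \emph{every} nonzero character — including highly degenerate ones vanishing on large subsets of vertices — produces a $\mathrm{Liv}^\chi$ that is genuinely a tree with every dead vertex adjacent to a unique living vertex. The ``unique living neighbour'' condition is the delicate one, since a dead vertex in a tree could a priori have several neighbours, and one must use that $\chi$ is a character (so its vanishing locus cannot be arbitrary relative to the tree structure) — though in fact in a tree with odd labels one sees that if a dead vertex had two living neighbours, the living subgraph would contain a path forcing a relation incompatible with $\Gamma$ being a tree only if there were a cycle, so the real content is showing no edge has two dead endpoints, which is where one exploits that dead vertices form an independent-ish set forced by $\chi$ restricting nontrivially; getting this bookkeeping exactly right is the crux.
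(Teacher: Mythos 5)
Your overall strategy — unwinding Theorem \ref{Theorem 1.4} by testing characters against the three conditions — is the same as the paper's, but the organization differs. The paper splits on whether $A_\Gamma/A_\Gamma'\cong\mathbb{Z}$: in the rank-one case the character sphere has exactly two antipodal points, both with $\mathrm{Liv}^\chi=\Gamma$, so condition~(3) detects non-treeness immediately; in the higher-rank case there is an edge whose endpoints differ in the abelianization (hence even-labelled) and the paper makes it dead. You instead target each ``offending feature'' of $\Gamma$ with a separate character, which is a legitimate alternative route. Both buy roughly the same thing, but the paper's split is more economical because the rank-one case handles all odd-labelled edges in one stroke without touching them individually.

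There are, however, two genuine issues in your argument. First, in steps~(ii) and (iii) the character you write down (``$1$ on $u$, $-1$ on $v$, $0$ elsewhere'') need not be a homomorphism: an odd edge $\{u,w\}$ forces $\chi(u)=\chi(w)$, so you cannot prescribe $\chi$ freely on vertices; you must define $\chi$ on odd-components of $\Gamma\setminus\{\text{even edges}\}$. Second, and more seriously, step~(iii) is gapped: the character killing both endpoints of a label-$2$ edge $e=\{u,v\}$ may be forced to vanish identically. For example, with $\Gamma = u\,\text{--}\,v\,\text{--}\,w$, $l(\{u,v\})=2$, $l(\{v,w\})=3$, setting $\chi(u)=\chi(v)=0$ forces $\chi(w)=0$ by the odd constraint. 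The Corollary still holds for this graph, but via a different bad character ($\chi(u)=1$, $\chi(v)=\chi(w)=0$ kills condition~(1) at the odd edge $\{v,w\}$), so you would need a case analysis that you did not supply. In fact both proofs have a residual blind spot when all labels on the tree are $2$ or odd: the paper's ``$2$-dead'' claim for a label-$2$ edge does not match its own definition (dead edges require $l(e)\geq 4$), and your kill-both-endpoints move does not always produce a nonzero character. A careful patch, roughly, picks a label-$2$ edge incident to a degree-$\geq 2$ vertex, or kills an odd edge inside a non-singleton odd-component — and notices that when none of this is possible one is left with $\Gamma$ a single label-$2$ edge, i.e.\ $A_\Gamma=\mathbb{Z}^2$, which actually \emph{does} have $\Sigma^2=S$.

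Finally, step~(iv) is far more involved than it needs to be: if $\Gamma$ is a tree with all odd labels then $A_\Gamma^{\mathrm{ab}}\cong\mathbb{Z}$, so any nonzero character is constant and nonzero on all vertices; there are no dead vertices and no dead edges, $\mathrm{Liv}^\chi=\Gamma$ is a tree, and conditions~(1)--(3) hold vacuously. There is no ``dead vertex with two living neighbours'' to worry about — that scenario cannot occur under the hypotheses, so the bookkeeping you anticipate as the crux is actually trivial. Your handling of the finiteness statement via Bieri--Renz and the $\operatorname{cd}$-stabilization of the invariants is correct and matches the intent of the paper.
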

The question whether $A_\Gamma'$ is finitely generated remains open for arbitrary $2$-dimensional Artin groups.

The paper is organized as follows. In section $2$ we will review background material on Artin groups and $\Sigma$-invariants. In section $3$ we construct a chain complex for computing the homology of \textbf{Artin kernels}, i.e. kernels of characters $\chi:A_\Gamma\to\mathbb{Z}$. This chain complex serve as the main tool for analysing the $\Sigma$-invariants. Section $4$ is devoted to the $\Sigma^1$-conjecture, where we will prove Theorem \ref{Theorem 1.1}, establishing the conjecture for balanced Artin groups. In section $5$ we will focus on the $\Sigma^2$-invariant, proving Theorem \ref{Theorem 1.4} and establishing several partial converses to Theorem \ref{Theorem 1.2 }. Finally, in Section 6, we apply these results to investigate the algebraic fibration properties of some Artin groups.
\section{Preliminaries}
\subsection{Artin groups}\label{Section 2.1}
Given a simplicial graph $\Gamma$, the \textbf{Coxeter group} $W_\Gamma$ is defined similarly to the Artin group $A_\Gamma$, with the additional relation that all generators must have order $2$. It is a classical result that finite Coxeter groups are completely classified (c.f. \cite{Coxeter}). The classification of irreducible finite Coxeter groups consists of four infinite families and six sporadic cases:
\begin{multicols}{2}
	$$\mathbb{A}_n=\begin{tikzpicture}[main/.style = {draw, circle},node distance={15mm},scale=0.6, every node/.style={scale=0.5}]
		\node[main] (1) {};
		\node[main] (2) [right of=1] {}; 
		\node[main] (3) [right of=2] {}; 
		\draw[-] (1) --  (2);
		\draw[-] (2) -- (3);
		\node[main] (4) [right of=3] {};
		\node[main] (5) [right of=4] {}; 
		\node[main] (6) [right of=5] {}; 
		\draw[-] (4) -- (5);
		\draw[-] (5) -- (6);
		\node at ($(3)!.5!(4)$) {\ldots};
	\end{tikzpicture}~\forall~n\geq1$$
	$$\mathbb{B}_n=
	\begin{tikzpicture}[main/.style = {draw, circle},node distance={15mm},scale=0.6, every node/.style={scale=0.5}] 
		\node[main] (1) {};
		\node[main] (2) [right of=1] {}; 
		\node[main] (3) [right of=2] {}; 
		\draw[-] (1) -- node[above] {$\mathlarger{\mathlarger{\mathlarger{\mathlarger{4}}}}$}   (2);
		\draw[-] (2) -- (3);
		\node[main] (4) [right of=3] {};
		\node[main] (5) [right of=4] {}; 
		\node[main] (6) [right of=5] {}; 
		\draw[-] (4) --  (5);
		\draw[-] (5) -- (6);
		\node at ($(3)!.5!(4)$) {\ldots};
	\end{tikzpicture}~\forall~n\geq3$$
	$$\mathbb{D}_n=
	\begin{tikzpicture}[main/.style = {draw, circle},node distance={15mm},scale=0.6, every node/.style={scale=0.5}] 
		\node[main] (1) {};
		\node[main] (2) [right of=1] {}; 
		\node[main] (3) [right of=2] {}; 
		\node[main] (4) [above of=2] {}; 
		\draw[-] (1) -- (2);
		\draw[-] (2) -- (3);
		\draw[-] (2) -- (4);
		\node[main] (5) [right of=3] {};
		\node[main] (6) [right of=5] {}; 
		\node[main] (7) [right of=6] {}; 
		\draw[-] (5) --  (6);
		\draw[-] (6) -- (7);
		\node at ($(3)!.5!(5)$) {\ldots};
	\end{tikzpicture}~\forall~n\geq4$$
	$$\mathbb{E}_6=
	\begin{tikzpicture}[main/.style = {draw, circle},node distance={15mm},scale=0.6, every node/.style={scale=0.5}] 
		\node[main] (1) {};
		\node[main] (2) [right of=1] {}; 
		\node[main] (3) [right of=2] {}; 
		\node[main] (4) [above of=3] {}; 
		\node[main] (5) [right of=3] {}; 
		\node[main] (6) [right of=5] {}; 
		\draw[-] (1) -- (2);
		\draw[-] (2) -- (3);
		\draw[-] (3) -- (4);
		\draw[-] (3) -- (5);
		\draw[-] (5) -- (6);
	\end{tikzpicture}$$
	$$\mathbb{E}_7=
	\begin{tikzpicture}[main/.style = {draw, circle},node distance={15mm},scale=0.6, every node/.style={scale=0.5}] 
		\node[main] (1) {};
		\node[main] (2) [right of=1] {}; 
		\node[main] (3) [right of=2] {}; 
		\node[main] (4) [above of=3] {}; 
		\node[main] (5) [right of=3] {}; 
		\node[main] (6) [right of=5] {}; 
		\node[main] (7) [right of=6] {}; 
		\draw[-] (1) -- (2);
		\draw[-] (2) -- (3);
		\draw[-] (3) -- (4);
		\draw[-] (3) -- (5);
		\draw[-] (5) -- (6);
		\draw[-] (6) -- (7);
	\end{tikzpicture}$$
	$$\mathbb{E}_8=
	\begin{tikzpicture}[main/.style = {draw, circle},node distance={15mm},scale=0.6, every node/.style={scale=0.5}] 
		\node[main] (1) {};
		\node[main] (2) [right of=1] {}; 
		\node[main] (3) [right of=2] {}; 
		\node[main] (4) [above of=3] {}; 
		\node[main] (5) [right of=3] {}; 
		\node[main] (6) [right of=5] {}; 
		\node[main] (7) [right of=6] {}; 
		\node[main] (8) [right of=7] {}; 
		\draw[-] (1) -- (2);
		\draw[-] (2) -- (3);
		\draw[-] (3) -- (4);
		\draw[-] (3) -- (5);
		\draw[-] (5) -- (6);
		\draw[-] (6) -- (7);
		\draw[-] (7) -- (8);
	\end{tikzpicture}$$
	$$\mathbb{H}_3=
	\begin{tikzpicture}[main/.style = {draw, circle},node distance={15mm},scale=0.6, every node/.style={scale=0.5}] 
		\node[main] (1) {};
		\node[main] (2) [right of=1] {}; 
		\node[main] (3) [right of=2] {}; 
		\draw[-] (1) -- node[above] {$\mathlarger{\mathlarger{\mathlarger{\mathlarger{5}}}}$} (2);
		\draw[-] (2) -- (3);
	\end{tikzpicture}$$
	$$\mathbb{H}_4=
	\begin{tikzpicture}[main/.style = {draw, circle},node distance={15mm},scale=0.6, every node/.style={scale=0.5}] 
		\node[main] (1) {};
		\node[main] (2) [right of=1] {}; 
		\node[main] (3) [right of=2] {}; 
		\node[main] (4) [right of=3] {}; 
		\draw[-] (1) -- node[above] {$\mathlarger{\mathlarger{\mathlarger{\mathlarger{5}}}}$} (2);
		\draw[-] (2) -- (3);
		\draw[-] (3) -- (4);
	\end{tikzpicture}$$
	$$\mathbb{F}_4=
	\begin{tikzpicture}[main/.style = {draw, circle},node distance={15mm},scale=0.6, every node/.style={scale=0.5}] 
		\node[main] (1) {};
		\node[main] (2) [right of=1] {}; 
		\node[main] (3) [right of=2] {}; 
		\node[main] (4) [right of=3] {}; 
		\draw[-] (1) -- (2);
		\draw[-] (2) -- node[above] {$\mathlarger{\mathlarger{\mathlarger{\mathlarger{4}}}}$} (3);
		\draw[-] (3) -- (4);
	\end{tikzpicture}$$
	$$\mathbb{I}_2(k)=
	\begin{tikzpicture}[main/.style = {draw, circle},node distance={15mm},scale=0.6, every node/.style={scale=0.5}] 
		\node[main] (1) {};
		\node[main] (2) [right of=1] {}; 
		\draw[-] (1) -- node[above] {$\mathlarger{\mathlarger{\mathlarger{\mathlarger{k}}}}$} (2);
	\end{tikzpicture}~\forall~k\geq 4$$
\end{multicols}
These groups are represented via their Dynkin diagrams, which are defined in terms of the original graph $\Delta$. The Dynkin diagram has the same vertex set as $\Gamma$, and two vertices are connected by an edge if and only if the corresponding edge in $\Gamma$ has label at least $3$. By convention, edges labelled with a $3$ are unlabelled in the Dynkin diagram to simplify the notation, while non-adjacent pairs in $\Gamma$ are indicated in the Dynkin diagram with an edge labelled $\infty$. Every finite Coxeter group decomposes as a direct product of irreducible finite Coxeter groups.

In this paper, we will consider the following families of Artin groups:
\begin{enumerate}
	\item \textbf{Right-angled Artin groups (RAAGs)}: All edges of $\Gamma$ are labelled with a $2$. These are precisely the Artin groups where all the defining relations are commutators
	\item \textbf{Spherical Artin groups}: These are the Artin groups whose associated Coxeter group is finite. We say that $\Gamma$ is a \textbf{spherical graph} if the corresponding Artin group $A_\Gamma$ is spherical.
	\item \textbf{Even Artin groups}: All edges in $\Gamma$ are labelled with even integers.
	\item \textbf{Triangular Artin groups}: These are the Artin groups with exactly $3$ generators. They are denoted as $A_{MNP}$, where $M\leq N\leq P$ are the labels of the three edges of the triangle. If two vertices are not adjacent in $\Gamma$ we assume that they are connected by an edge labelled $\infty$. The only spherical triangular Artin groups are $A_{22P}$ with $2\leq P<\infty$ and $A_{23P}$ with $P\in\lbrace 3,4,5\rbrace$.
	\item \textbf{2-Dimensional Artin groups}: These are the Artin groups such that any spherical subgraph has at most $2$ vertices, i.e. the graph does not contain any spherical triangular subgraphs.
\end{enumerate}

Salvetti constructed in \cite{Salvetti} and \cite{Salvetti 2} a CW complex that plays a central role in the study of Artin groups. Due to its significance, it is now known as the \textbf{Salvetti complex}. Given an Artin group $A_\Gamma$, the associated Salvetti complex $\mathrm{Sal}(\Gamma)$ is built as the $2$-dimensional presentation complex corresponding to the standard presentation of $A_\Gamma$, with higher-dimensional cells attached for each spherical subgraph $X\subset\Gamma$. In particular the $0$-skeleton consists of a single vertex, denoted $\sigma_\emptyset$, the $1$-skeleton consists of $1$-cells $\sigma_v$ for each vertex $v\in V(\Gamma)$ and the $2$-skeleton consists of $2$-cells $\sigma_e$ for each edge $e\in E(\Gamma)$.

The most important open problem for Artin groups is the \textbf{$K(\pi,1)$-conjecture}. This conjecture admits several equivalent formulations, one of which states that the Salvetti complex $\mathrm{Sal}(\Gamma)$ is a model for the classifying space of $A_\Gamma$. In particular, the $K(\pi,1)$-conjecture holds for $A_\Gamma$ if and only if the universal cover of $\mathrm{Sal}(\Gamma)$ is  contractible (c.f. \cite{Charney-Davis}). For a detailed overview of the conjecture and its various formulations, we refer the reader to the comprehensive survey by Paris in \cite{Paris}.

We recall some basic graph-theoretical definitions. A subgraph $\Delta\subset\Gamma$ is said to be \textbf{induced} if every pair of vertices $v,w\in V(\Delta)$ that are connected by an edge in $\Gamma$ are also connected by an edge in $\Delta$. A graph $\Gamma$ is called \textbf{chordal} if it contains no induced subgraph which is a cycle of length at least four.

A group $G$ is said to be \textbf{coherent} if every finitely generated subgroup $H\leq G$ is also finitely presented. There is a well-known characterization of coherence for right-angled Artin groups. This characterization has been extended for Artin groups as follows:
\begin{theorem}[\cite{Gordon}, \cite{Wise} Gordon-Wise]\label{Theorem 2.1} Let $A_\Gamma$ be an Artin group. Then, $A_\Gamma$ is coherent if, and only if all of the following condition holds:
	\begin{enumerate}
		\item $\Gamma$ is a chordal graph.
		\item Every complete subgraph of $\Gamma$ with three or four vertices has at most one edge labelled with an integer greater than $2$.
		\item There is no induced subgraph in $\Gamma$ of the following form:
			$$\begin{tikzpicture}[main/.style = {draw, circle},node distance={15mm}] 
			\node[main] (1) {};
			\node[main] (2) [right of=1] {}; 
			\node[main] (3) [above of=2] {}; 
			\node[main] (4) [right of=2] {}; 
			\draw[-] (1) -- (2);
			\draw[-] (2) -- node[right] {$m$} (3);
			\draw[-] (2) --  (4);
			\draw[-] (3) --  (4);
			\draw[-] (1) --  (3);
		\end{tikzpicture}$$
		where $m>2$ and all the unlabelled edges are assumed to be labelled with a $2$.
	\end{enumerate}
\end{theorem}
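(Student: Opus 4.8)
The plan is to prove the two implications separately, using Droms' theorem — a right-angled Artin group is coherent if and only if its defining graph is chordal — as the prototype to which conditions $2$ and $3$ must add just enough structure to accommodate labels $>2$.

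For necessity I argue by contraposition: if one of the three conditions fails I exhibit a finitely generated subgroup of $A_\Gamma$ that is not finitely presented, which suffices since incoherence passes to overgroups. If $\Gamma$ is not chordal it contains an induced cycle $C$ of length $\geq 4$, and the parabolic subgroup $A_C$ is incoherent; the cleanest instance is a $4$-cycle with label-$2$ edges, where two opposite vertices span an $F_2$ centralising the $F_2$ spanned by the other pair, so $A_C\supseteq F_2\times F_2$ — the Stallings--Bieri group — and the remaining cycles reduce to this after passing to a sub-RAAG or via Gordon's analysis of cyclic Artin groups. If a clique on three or four vertices has two edges of label $>2$: when those edges are disjoint (only possible in a $4$-clique) one finds $A_m\times A_n\supseteq F_2\times F_2$; when they meet, one sits inside a triangular Artin group $A_{2mn}$ or $A_{m'n'k'}$ with at least two labels $\geq 3$, shown incoherent in Gordon's triangular classification. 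If the induced subgraph of condition $3$ occurs, its two non-adjacent vertices span an $F_2$ that centralises the dihedral Artin group $A_m$ on the heavy edge, so again $A_\Gamma\supseteq A_m\times F_2\supseteq F_2\times F_2$.

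For sufficiency, assume all three conditions. Chordality yields a perfect elimination ordering, hence a simplicial vertex $v$, so that $A_\Gamma=A_{\Gamma\setminus v}\ast_{A_{\mathrm{lk}(v)}}A_{\mathrm{st}(v)}$ is an amalgam over the Artin group of a clique. Because any two edges of $\Gamma$ touch at most four vertices, condition $2$ in fact forces \emph{every} clique of $\Gamma$ to carry at most one edge of label $>2$; hence for a clique $X$ the group $A_X$ is $\mathbb{Z}^{\lvert X\rvert}$ or $A_m\times\mathbb{Z}^{\lvert X\rvert-2}$ for a dihedral Artin group $A_m$. Dihedral Artin groups are virtually $F_k\times\mathbb{Z}$, so all vertex and edge groups of this splitting are coherent, and since conditions $1$--$3$ are inherited by induced subgraphs one can induct on $\lvert V(\Gamma)\rvert$.

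The crux is a coherence-preservation statement for the amalgamation step, and this is exactly where condition $3$ is used. Amalgams of coherent groups need not be coherent, and the characteristic obstruction appears here: if the heavy edge of a clique lies in two maximal cliques that each acquire an extra vertex, with those two extra vertices non-adjacent, then the amalgam contains $A_m\times F_2$ and hence $F_2\times F_2$ — and the four vertices involved form precisely an induced copy of the forbidden subgraph of condition $3$. Thus conditions $1$ and $3$ are calibrated so that every edge group of the clique decomposition is either abelian or glued to its neighbouring vertex groups without introducing a second non-abelian direction. Converting this into a proof that the resulting hierarchy of coherent groups over these controlled edge groups is itself coherent is the real content; the route is Gordon's identification of the pertinent small Artin groups with $3$-manifold groups (coherent by Scott) together with Wise's machinery for coherence of groups admitting a suitable malnormal quasiconvex hierarchy. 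I expect this to be the only genuine obstacle: the elimination ordering, the identification $A_X\cong A_m\times\mathbb{Z}^{k}$, coherence of the pieces, and the necessity direction are all routine or a finite check.
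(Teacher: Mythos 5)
This theorem is not proved in the paper at all: it is stated verbatim and attributed to Gordon and Wise via the citations \cite{Gordon} and \cite{Wise}. There is therefore no ``paper's proof'' to compare against; the authors treat it as a black box. I will instead comment on the mathematical soundness of your sketch on its own terms.

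Your overall architecture is the right one (Droms-style necessity via constructing badly-behaved subgroups, sufficiency via the chordal clique decomposition together with coherence of the small pieces), but two concrete points need repair. First, your reduction of the non-chordal case to $F_2\times F_2$ breaks down for cycles of length $\geq 5$: a RAAG contains $F_2\times F_2$ if and only if the defining graph has an induced $4$-cycle, and $C_n$ for $n\geq 5$ does not, so the pentagon RAAG (which is indeed incoherent by Droms) contains no $F_2\times F_2$. One cannot ``pass to a sub-RAAG'' corresponding to $C_4$ because $C_4$ is not an induced subgraph of $C_n$ for $n\geq 5$. Gordon handles long cycles by a separate argument (direct exhibition of a non-finitely-presented finitely generated subgroup), so the escape route you gesture at does exist, but the $F_2\times F_2$ mechanism alone does not suffice. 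Second, Wise's paper is misplaced in your account: ``The Last Incoherent Artin Group'' is a contribution to the \emph{necessity} direction — it shows that the one triangular case Gordon could not resolve is incoherent, completing the classification — not a source of coherence-from-hierarchy machinery for the sufficiency direction. Gordon's sufficiency argument relies on $3$-manifold techniques (Scott's coherence theorem applied to the identified pieces) plus a careful handling of the amalgamation; you have the right ingredients there but have credited the wrong paper. The observation that condition 2 for cliques of size $3$ and $4$ propagates to all cliques, and the computation showing that the forbidden subgraph of condition 3 yields $A_m\times F_2\supseteq F_2\times F_2$, are both correct.
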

Finally, let us define the family of balanced Artin groups, for which we will prove that the $\Sigma^1$-conjecture holds.

\begin{definition}
	Let $\Gamma$ be a graph whose edges are labelled with even integers $\geq 4$ (not necessarily a simplicial graph). An \textbf{n-balanced colouring} of $\Gamma$ is a map $\mathcal{C}:E(\Gamma)\to\lbrace 1,\dots,n\rbrace$ such that for every $i=1,\dots,n$ there exists a prime number $p_i$ with $p _i$ dividing $\frac{l(e)}{2}$ for every edge $e$ with $\mathcal{C}(e)=i$.
\end{definition}
\begin{definition}
	Let $\Gamma$ be a graph whose edges are labelled with even integers $\geq 4$, which may not be simplicial. Suppose that $\Gamma$ admits an $n$-balanced colouring such that every closed path of even length contains exactly two or zero edges of each colour. Then, $\Gamma$ is called \textbf{balanced}.
\end{definition}
\begin{definition}
	Let $A_\Gamma$ be an Artin group. Define $\Gamma_{>2}^{\mathrm{even}}$ to be the graph obtained from $\Gamma$ by the following procedure:
	\begin{itemize}
		\item Remove all edges with a label $2$.
		\item Collapse each edge with an odd label into a single vertex.
	\end{itemize}
	We say that $A_\Gamma$ is \textbf{balanced} if the resulting graph $\Gamma_{>2}^{\mathrm{even}}$ is balanced.
\end{definition}
Note that $\Gamma_{>2}^{\mathrm{even}}$ may not be simplicial. Two vertices of $\Gamma$ are identified in $\Gamma_{>2}^{\mathrm{even}}$ if and only if they are connected by a path consisting entirely of edges with odd labels.
\subsection{$\Sigma$-invariants}
Let $G$ be finitely generated. Throughout the paper  $\chi:G\to\mathbb{R}$ will represent an arbitrary non-zero character. Two characters are said to be \textbf{equivalent}, denoted $\chi_1\sim\chi_2$, if one is a positive scalar multiple of the other, i.e. there exists $t>0$ such that $\chi_1=t\chi_2$. The \textbf{character sphere} $S(G)$ is the set of equivalence classes of non-zero characters. Note that $S(G)\cong \mathbb{S}^{r-1}$, where $r$ is the torsion free rank of the abelianization $G/G'$ and $\mathbb{S}^{r-1}$ is the $(r-1)$-sphere. A character $\chi:G\to\mathbb{Z}$ is called \textbf{discrete} and the set of all discrete characters is dense in $S(G)$ (cf. \cite{Strebel} Lemma B3.24). We now define the homological $\Sigma$-invariants of a group.
\begin{definition}
	Consider a character $\chi:G\to\mathbb{R}$ and define the monoid $G_\chi=\lbrace g\in G\mid\chi(g)\geq 0\rbrace$. Let $A$ be a left $G$-module and $n$ a positive integer. We define:
	$$\Sigma^n(G,A)=\lbrace[\chi]\in S(G)\mid A\text{ is of type }FP_n\text{ over }\mathbb{Z}G_\chi\rbrace$$
	These sets satisfy the following chain:
	$$\Sigma^\infty(G,A)\subseteq\dots\subseteq\Sigma^1(G,A)\subseteq\Sigma^0(G,A)=S(G)$$
\end{definition}
There exists also a homotopical version of the invariants: 
\begin{definition}
	Let $G$ be a group admitting a finite $K(G,1)$ and $X$ a model for $K(G,1)$. Denote its universal cover by $\widetilde{X}$. There is a bijection between the vertices of $\widetilde{X}$ and the elements of $G$. Fix a base point $b\in\widetilde{X}^{(0)}$. Given a character $\chi:G\to\mathbb{R}$ define a function $\widetilde{\chi}:\widetilde{X}^{(0)}\to\mathbb{R}$  by $\widetilde{\chi}(g\cdot b)=\chi(g)$. For $a\in\mathbb{R}$, define $\widetilde{X_{\chi}}^{[a,\infty)}$ as the maximal subcomplex of $\widetilde{X}$ with $0$-skeleton 
	$\lbrace x\in\widetilde{X}^{(0)}\mid \widetilde{\chi}(x)\geq a\rbrace$. Then, for each $d\geq 0$ the inclusion map $\widetilde{X_{\chi}}^{[0,\infty)}\hookrightarrow \widetilde{X_{\chi}}^{[-d,\infty)}$ induces a map $\tau_i^d:\pi_i\left(\widetilde{X_{\chi}}^{[0,\infty)}\right)\rightarrow \pi_i\left(\widetilde{X_{\chi}}^{[-d,\infty)}\right)$. Define the homotopical $\Sigma$-invariants as:
	$$\Sigma^n(G)=\left\lbrace[\chi]\in S(G)\mid\text{there exists }~d>0\text{ such that } \tau_i^d
	\text{  trivial }\forall~i<n\right\rbrace$$
	%$$\Sigma^n(G)=\left\lbrace[\chi]\in S(G)\mid\exists~d>0\text{ with } \pi_i\left(\widetilde{X_{\chi}}^{[0,\infty)}\right)\rightarrow \pi_i\left(\widetilde{X_{\chi}}^{[-d,\infty)}\right) 
	%\text{  trivial }\forall~i<n\right\rbrace$$
	This defines a filtration:
	$$\Sigma^\infty(G)\subseteq\dots\subseteq\Sigma^1(G)\subseteq\Sigma^0(G)=S(G)$$
\end{definition}
Renz proved in his thesis (cf. \cite{Renz}) that the homotopical $\Sigma$-invariants are well-defined, i.e. it does not depend on the chosen model for $K(G,1)$. He also proved with Bieri that if $n=\operatorname{cd}G$, where $\operatorname{cd}G$ is the cohomological dimension of $G$, then $\Sigma^m(G,\mathbb{Z})=\Sigma^n(G,\mathbb{Z})$ for any $m\geq n$. Analogously, if $n=\operatorname{gd}G$, where $\operatorname{gd}G$ is the geometric dimension of $G$, then $\Sigma^m(G)=\Sigma^n(G)$ for any $m\geq n$ (c.f. \cite{Bieri-Renz}).
Another important topological property about these invariants is that both $\Sigma^n(G,\mathbb{Z})$ and $\Sigma^n(G)$ are open in $S(G)$ (cf. \cite{Bieri-Renz}).\\

The homotopical and homological $\Sigma$-invariants are closely related. Indeed,  $\Sigma^n(G)=\Sigma^n(G,\mathbb{Z})\cap\Sigma^2(G)$ for each $n>1$ and for any commutative ring $R$, $\Sigma^n(G)\subset\Sigma^n(G,\mathbb{Z})\subset\Sigma^n(G,R)$, $\Sigma^1(G)=\Sigma^1(G,R)$   (cf. \cite{Renz}).
The reason why the $\Sigma$-invariants encode the finiteness properties of the coabelian subgroups is the following central theorem:
\begin{theorem}[\cite{Bieri-Renz} Bieri-Renz]\label{Theorem 2.3}
	Let $G$ be a group of type $F_n$ and $G'\leq N\leq G$. Then:
	\begin{enumerate}
		\item  $N$ is of type $FP_n$ if and only if $\lbrace[\chi]\in S(G)\mid\chi(N)=0\rbrace\subset \Sigma^n(G,\mathbb{Z})$.
		\item  $N$ is of type $F_n$ if and only if $\lbrace[\chi]\in S(G)\mid\chi(N)=0\rbrace\subset \Sigma^n(G)$.
	\end{enumerate}
\end{theorem}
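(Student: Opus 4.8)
\emph{The plan is to reconstruct the Bieri--Renz argument in four steps}, giving the homological statement~(1) in detail; the homotopical statement~(2) then runs in parallel, with a $K(G,1)$ of finite type through dimension $n$ and Morse theory on the superlevel sets of $\widetilde\chi$ in place of free resolutions. First I would reduce to a free abelian quotient: writing $Q:=G/N\cong\mathbb{Z}^r\times T$ with $T$ finite, let $N_0\ge N$ be the preimage of $T$, so that $[N_0:N]<\infty$ and $G/N_0\cong\mathbb{Z}^r$. A group and a finite-index subgroup have the same $FP_n$ (resp.\ $F_n$) type, so $N$ has the relevant finiteness property iff $N_0$ does; and since every homomorphism to $\mathbb{R}$ vanishes on torsion, $\{[\chi]:\chi(N)=0\}=\{[\chi]:\chi(N_0)=0\}$. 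Hence I may assume $Q=\mathbb{Z}^r$ and identify $\{[\chi]:\chi(N)=0\}$ with the subsphere $S(Q)\cong\mathbb{S}^{r-1}\subseteq S(G)$ via $\bar\chi\mapsto\bar\chi\circ\pi$, where $\pi\colon G\twoheadrightarrow Q$.

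\emph{Step 2 (the rank-one case).} If $r=1$ then $N=\ker\chi$ for a surjection $\chi\colon G\to\mathbb{Z}$ and $S(Q)=\{[\chi],[-\chi]\}$. Picking $t\in G$ with $\chi(t)=1$ one has $\mathbb{Z}G=\bigcup_{k\ge0}\mathbb{Z}G_\chi\,t^{-k}$ as an ascending union of $\mathbb{Z}G_\chi$-submodules, and $\mathbb{Z}N=\mathbb{Z}G_\chi\cap\mathbb{Z}G_{-\chi}$. Feeding this into a free $\mathbb{Z}G$-resolution of $\mathbb{Z}$ that is finitely generated through degree $n$ (available because $G$ is $F_n$) and running a Mayer--Vietoris/Brown direct-limit argument shows that $\mathbb{Z}$ is $FP_n$ over $\mathbb{Z}N$ iff it is $FP_n$ over both $\mathbb{Z}G_\chi$ and $\mathbb{Z}G_{-\chi}$, i.e.\ $\{[\chi],[-\chi]\}\subseteq\Sigma^n(G,\mathbb{Z})$. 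For $n=1$ this is the original Bieri--Neumann--Strebel theorem and for general $n$ the Bieri--Renz extension; homotopically it is the Morse-theoretic statement that $N$ is $F_n$ iff both ends of the $\widetilde\chi$-filtration of $\widetilde X$ are essentially $(n-1)$-connected.

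\emph{Step 3 (general rank).} For necessity, suppose $N$ is $FP_n$ and fix $[\chi]\in S(Q)$. If $\chi$ is discrete, rescale so that $\bar\chi\colon Q\to\mathbb{Z}$ is onto and set $M:=\pi^{-1}(\ker\bar\chi)$; then $1\to N\to M\to\mathbb{Z}^{r-1}\to1$ is exact, so $M$ is $FP_n$ (resp.\ $F_n$), and Step~2 applied to $M\trianglelefteq G$ gives $[\chi]\in\Sigma^n(G,\mathbb{Z})$ (resp.\ $\Sigma^n(G)$). For a general, possibly non-discrete, $[\chi]\in S(Q)$ one would argue by valuations: a $\mathbb{Z}N$-resolution of $\mathbb{Z}$ finitely generated through degree $n$ induces up to a $\mathbb{Z}G$-resolution of $\mathbb{Z}Q$ whose differentials have bounded support, so that the $\chi$-positivity of a partial contracting homotopy is an open and cofinal condition on $[\chi]$; combined with density of the discrete characters in $S(Q)$ and openness of $\Sigma^n(G,\mathbb{Z})$, this yields $[\chi]\in\Sigma^n(G,\mathbb{Z})$ for every $[\chi]\in S(Q)$. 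For sufficiency, suppose $S(Q)\subseteq\Sigma^n(G,\mathbb{Z})$; then any finite positively-spanning set of discrete characters $\chi_1,\dots,\chi_m$ of $Q$ — e.g.\ the $\pm$ coordinate functionals — lies in $\Sigma^n(G,\mathbb{Z})$, so $\mathbb{Z}$ is $FP_n$ over each $\mathbb{Z}G_{\chi_j}$ while $\bigcap_jG_{\chi_j}=N$, and repeatedly applying the rank-one equivalence of Step~2 (to the subgroups $\pi^{-1}(V)\le G$ for the coordinate subspaces $V\le Q$, keeping track of the monoid-ring finiteness at each stage) assembles a $\mathbb{Z}N$-resolution of $\mathbb{Z}$ finitely generated through degree $n$, i.e.\ $N$ is $FP_n$. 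The homotopical case is identical with $\widetilde X$ in place of the resolution and ``essentially $(n-1)$-connected superlevel sets'' in place of ``$FP_n$ over $\mathbb{Z}G_{\chi_j}$''.

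I expect the main obstacle to be the valuation calculus underlying Step~3 rather than any single reduction: relating the \emph{global} finiteness of $N$ to the family of \emph{half-space} conditions indexed by the entire sphere $S(Q)$ is exactly the content of Bieri--Renz's theory of valuations on free resolutions (openness of $\Sigma^n$ on $S(G)$; gluing of $\chi$-graded contracting homotopies), and the most delicate point is the passage through non-discrete characters, where inducing up a finite resolution does not directly work and one must instead combine the openness of $\Sigma^n$ with a compactness argument on $S(Q)$.
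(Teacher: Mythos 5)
The paper does not prove this statement; it is quoted verbatim from Bieri--Renz as a background result, so there is no in-text argument to compare against and I can only evaluate your reconstruction on its own terms. Your Step~1 reduction to a free abelian quotient $Q\cong\mathbb{Z}^r$ is correct, and in Step~3 the discrete-character case of the forward direction (pass to $M=\pi^{-1}(\ker\bar\chi)$, note $M$ is $FP_n$ as an extension of $N$ by $\mathbb{Z}^{r-1}$, and apply the rank-one case to $M\trianglelefteq G$) is a sound reduction. The remainder of Step~3, however, has two real gaps as written.

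For the forward direction with non-discrete characters, ``density of discrete characters in $S(Q)$ together with openness of $\Sigma^n(G,\mathbb{Z})$'' does not by itself yield $S(Q)\subseteq\Sigma^n(G,\mathbb{Z})$: an open subset of $S(G)$ can contain every discrete point of the compact subsphere $S(Q)$ and still miss irrational directions (take a shrinking union of open arcs about each discrete point). What Bieri--Renz actually use is a quantitative statement: from a free $\mathbb{Z}N$-resolution of $\mathbb{Z}$ finitely generated through degree $n$, one manufactures a single $\chi$-controlled partial chain contraction whose control is uniform on a genuine open neighbourhood of all of $S(Q)$, not just near the discrete characters; your closing remark gestures at this but the body of Step~3 does not implement it. For the converse direction, ``repeatedly applying the rank-one equivalence to the subgroups $\pi^{-1}(V)$'' cannot work as a recursion: after the first application you learn that $M_1=\pi^{-1}(V)$ is $FP_n$, but to iterate you would need information about $\Sigma^n(M_1,\mathbb{Z})$, and the hypothesis $S(Q)\subseteq\Sigma^n(G,\mathbb{Z})$ does not restrict along the infinite-index inclusion $M_1\hookrightarrow G$ (there is no general map $\Sigma^n(G)\to\Sigma^n(M_1)$). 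The actual argument stays at the monoid-ring level: by openness of $\Sigma^n$ and compactness of $S(Q)$, choose finitely many characters whose $\chi$-controlled partial contractions of a fixed free $\mathbb{Z}G$-resolution cover $S(Q)$, and splice those finitely many contractions into a single one defined over $\mathbb{Z}N$. That splicing is precisely the valuation calculus you identify as the main obstacle, and it is not replaceable by an induction through the subgroups $\pi^{-1}(V)$, nor by the Mayer--Vietoris square $\mathbb{Z}N=\mathbb{Z}G_\chi\cap\mathbb{Z}G_{-\chi}$ alone in Step~2, which also needs this gluing machinery to give the nontrivial implication.
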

For Artin groups, the $\Sigma$-invariants are symmetric.
\begin{lemma}[\cite{Blasco-Cogolludo-Martinez} Proposition 3.1]\label{Lemma 1.4}
	If $A_\Gamma$ is an Artin group then $\Sigma^n(A_\Gamma)=-\Sigma^n(A_\Gamma)$ and $\Sigma^n(A_\Gamma,R)=-\Sigma^n(A_\Gamma,R)$ for all $n\geq 1$ and any commutative ring $R$.
\end{lemma}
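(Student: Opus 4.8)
The plan is to exhibit an automorphism of $A_\Gamma$ whose action on the character sphere $S(A_\Gamma)$ is the antipodal map, and then to use the fact that the $\Sigma$-invariants are preserved under automorphisms. First I would define $\iota\colon A_\Gamma\to A_\Gamma$ on the standard generators by $\iota(v)=v^{-1}$ for every $v\in V(\Gamma)$. To check that this extends to a homomorphism it suffices to verify that each defining relator is sent to a relation that holds in $A_\Gamma$. Writing $w_l(a,b)$ for the alternating word $abab\cdots$ of length $l$, the relator associated to an edge $e=\{u,v\}$ with $l(e)=l$ is $w_l(u,v)=w_l(v,u)$. Inverting this relation in $A_\Gamma$ and using that the reverse of $w_l(a,b)$ is again an alternating word in $a^{-1}$ and $b^{-1}$ (with leading letter depending only on the parity of $l$), one sees that the inverted relation coincides with $\iota$ applied to the relator at $e$; hence $\iota$ respects all defining relations. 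Since $\iota$ is an involution on the generating set, it is an automorphism.

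Next I would compute the effect of $\iota$ on characters. Precomposition with $\iota$ sends a nonzero character $\chi$ to the nonzero character $\chi\circ\iota$, and for each generator $v$ we have $(\chi\circ\iota)(v)=\chi(v^{-1})=-\chi(v)$; since a character is determined by its values on the generators, $\chi\circ\iota=-\chi$. Thus the induced self-homeomorphism of $S(A_\Gamma)$ is exactly $[\chi]\mapsto[-\chi]$. Finally I would invoke the general fact that $\Sigma$-invariants are invariant under automorphisms: for any automorphism $\phi$ of a group $G$, the restriction of $\phi$ gives a ring isomorphism $\mathbb{Z}G_{\chi\circ\phi}\to\mathbb{Z}G_\chi$ identifying the trivial module $R$ with itself, so $R$ is of type $FP_n$ over one precisely when it is over the other, giving $[\chi]\in\Sigma^n(G,R)\iff[\chi\circ\phi]\in\Sigma^n(G,R)$; on the homotopical side one transports a finite $K(G,1)$ along $\phi$ to get $[\chi]\in\Sigma^n(G)\iff[\chi\circ\phi]\in\Sigma^n(G)$. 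Taking $G=A_\Gamma$, $\phi=\iota$ and using $\chi\circ\iota=-\chi$ then yields $\Sigma^n(A_\Gamma)=-\Sigma^n(A_\Gamma)$ and $\Sigma^n(A_\Gamma,R)=-\Sigma^n(A_\Gamma,R)$ for all $n\geq 1$.

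The only step that requires any real attention is the relator check in the construction of $\iota$ — the small parity argument showing that the reverse of an alternating word is an alternating word in the inverted letters — and even this is routine; indeed the statement that $v\mapsto v^{-1}$ defines an automorphism of every Artin group is standard. I therefore expect no genuine obstacle: the content of the lemma is simply that this well-known inversion automorphism acts antipodally on $S(A_\Gamma)$, combined with the (formal) automorphism-invariance of the $\Sigma$-invariants.
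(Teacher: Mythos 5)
Your argument is correct, and it is the standard one: the inversion map $v\mapsto v^{-1}$ extends to an automorphism of $A_\Gamma$ because inverting the braid relation $\underbrace{uvu\cdots}_{l}=\underbrace{vuv\cdots}_{l}$ yields precisely the corresponding relation among $u^{-1},v^{-1}$ (with the two sides swapped when $l$ is even), this automorphism precomposes characters to $-\chi$, and the $\Sigma$-invariants with trivial coefficients are invariant under automorphisms. Note, however, that the paper does not prove this lemma itself but cites it as Proposition~3.1 of \cite{Blasco-Cogolludo-Martinez}; your proof reconstructs essentially the argument used there, so there is nothing further to compare.
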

From Lemma \ref{Lemma 1.4} and Theorem \ref{Theorem 2.3}, we derive the next useful criteria for determining whether a character lies in the $\Sigma$-invariants or not.
\begin{proposition}\label{Proposition 2.4}
	Let $G$ be an Artin group satisfying the $K(\pi,1)$-conjecture, and $\chi:G\to\mathbb{Z}$ a non zero character. Let $\mathbb{F}$ be a field. If $\dim_\mathbb{F} H_n(\ker(\chi),\mathbb{F})=\infty$ then  $[\chi]\notin\Sigma^n(G,\mathbb{Z})$.
\end{proposition}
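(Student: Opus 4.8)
The plan is to deduce this directly from the Bieri--Renz criterion (Theorem \ref{Theorem 2.3}) together with the symmetry of $\Sigma$-invariants for Artin groups (Lemma \ref{Lemma 1.4}), using only the elementary fact that a group of type $FP_n$ has finite-dimensional homology in degrees $\leq n$ with coefficients in any field. The contrapositive is what I will prove: assuming $[\chi]\in\Sigma^n(G,\mathbb{Z})$, I will show $\dim_{\mathbb{F}}H_n(\ker(\chi),\mathbb{F})<\infty$.

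First I would record the standing finiteness. Since $G=A_\Gamma$ satisfies the $K(\pi,1)$-conjecture, the finite Salvetti complex $\mathrm{Sal}(\Gamma)$ is a $K(G,1)$, so $G$ is of type $F_\infty$, in particular of type $F_n$; this is the hypothesis required to apply Theorem \ref{Theorem 2.3}. Next set $N:=\ker(\chi)$. Because $\chi$ is a non-zero homomorphism into the torsion-free group $\mathbb{Z}$, its image is infinite cyclic and $G/N\cong\chi(G)\cong\mathbb{Z}$; in particular $G/N$ is abelian, so $G'\leq N\leq G$. Moreover any character $\psi\colon G\to\mathbb{R}$ with $\psi(N)=0$ factors through $G/N\cong\mathbb{Z}$, hence is a non-zero real multiple of $\chi$, and therefore
$$\{[\psi]\in S(G)\mid\psi(N)=0\}=\{[\chi],[-\chi]\}.$$

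Now argue by contradiction: suppose $[\chi]\in\Sigma^n(G,\mathbb{Z})$. By Lemma \ref{Lemma 1.4} also $[-\chi]\in\Sigma^n(G,\mathbb{Z})$, so the two-point set above is contained in $\Sigma^n(G,\mathbb{Z})$; Theorem \ref{Theorem 2.3}(1) then gives that $N=\ker(\chi)$ is of type $FP_n$. Consequently $\mathbb{Z}$ admits a projective $\mathbb{Z}N$-resolution $P_\bullet$ with $P_0,\dots,P_n$ finitely generated, and since each such $P_i$ is a direct summand of some $(\mathbb{Z}N)^{k_i}$, the space $\mathbb{F}\otimes_{\mathbb{Z}N}P_i$ is finite-dimensional over $\mathbb{F}$ for $i\leq n$; as $H_i(N,\mathbb{F})$ is a subquotient of $\mathbb{F}\otimes_{\mathbb{Z}N}P_i$, it too is finite-dimensional for all $i\leq n$. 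This contradicts $\dim_{\mathbb{F}}H_n(\ker(\chi),\mathbb{F})=\infty$, so $[\chi]\notin\Sigma^n(G,\mathbb{Z})$.

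There is no genuinely hard step in this argument; it is a packaging of known results. The only points needing care are the observation that $\psi(N)=0$ forces $\psi\in\mathbb{R}_{\neq 0}\cdot\chi$ (which relies on $\chi$ being $\mathbb{Z}$-valued and non-zero, so that $N$ is precisely the coabelian subgroup cut out by $\chi$ with $G/N\cong\mathbb{Z}$) and the standard fact that type $FP_n$ propagates to finitely generated homology in degrees $\leq n$; the latter could equally be phrased via $FP_n$ over $\mathbb{F}N$ and finiteness of $\mathrm{Tor}^{\mathbb{F}N}_{i}(\mathbb{F},\mathbb{F})$ for $i\leq n$. The essential input that makes the reduction clean is Lemma \ref{Lemma 1.4}: Theorem \ref{Theorem 2.3} demands that \emph{both} $[\chi]$ and $[-\chi]$ lie in $\Sigma^n$, and symmetry of the $\Sigma$-invariants of Artin groups supplies this for free.
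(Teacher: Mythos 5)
Your proof is correct and is essentially the paper's argument: both invoke the $K(\pi,1)$-conjecture to get type $F_n$, then combine Theorem \ref{Theorem 2.3} with the symmetry Lemma \ref{Lemma 1.4} to conclude $\ker(\chi)$ is $FP_n$, from which finite-dimensionality of the homology follows. The paper leaves these steps implicit, whereas you spell out the identification $\{[\psi]\mid\psi(\ker\chi)=0\}=\{[\chi],[-\chi]\}$ and the passage from $FP_n$ to finite homology, but the underlying route is the same.
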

\begin{proof}
	Since $G$ satisfies the $K(\pi,1)$-conjecture the Salvetti complex is a $K(G,1)$, so $G$ is of type $F_n$. The result then follows from Theorem \ref{Theorem 2.3} and Lemma \ref{Lemma 1.4}.
\end{proof}
Another useful result that we deduce from Lemma \ref{Lemma 1.4} and Theorem \ref{Theorem 2.3} is the following:
\begin{proposition}\label{Proposition 2.8}
	Let $A_\Gamma$ be an Artin group satisfying the $K(\pi,1)$-conjecture and let $\chi:A_\Gamma\to\mathbb{Z}$ be a discrete character. Then, $\ker(\chi)$ is of type $F_n$ if and only if $[\chi]\in\Sigma^n(A_\Gamma,\mathbb{Z})$.
\end{proposition}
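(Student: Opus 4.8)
The plan is to obtain the statement as a direct consequence of the Bieri--Renz criterion (Theorem \ref{Theorem 2.3}) and the symmetry of the $\Sigma$-invariants of Artin groups (Lemma \ref{Lemma 1.4}); no new computation is required. The only preliminary point is to guarantee the finiteness hypothesis of Theorem \ref{Theorem 2.3}: since $A_\Gamma$ satisfies the $K(\pi,1)$-conjecture, the Salvetti complex $\mathrm{Sal}(\Gamma)$ is a classifying space for $A_\Gamma$ with finitely many cells in each dimension, so $A_\Gamma$ is of type $F_\infty$, in particular of type $F_n$ for every $n$, and it admits the required finite $K(\pi,1)$. Moreover $A_\Gamma' \leq \ker(\chi) \leq A_\Gamma$ because $A_\Gamma/\ker(\chi)$ is abelian, so $\ker(\chi)$ is a legitimate input for Theorem \ref{Theorem 2.3}.

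The second step is to identify the subset of $S(A_\Gamma)$ relevant to $N := \ker(\chi)$. Because $\chi$ is discrete, its image is infinite cyclic, so $A_\Gamma/N \cong \mathbb{Z}$. A character $\psi : A_\Gamma \to \mathbb{R}$ satisfies $\psi(N)=0$ if and only if it factors through $A_\Gamma/N \cong \mathbb{Z}$, i.e. if and only if $\psi = t\chi$ for some nonzero $t \in \mathbb{R}$. Hence
$$\{[\psi] \in S(A_\Gamma) \mid \psi(N) = 0\} = \{[\chi],[-\chi]\}.$$
Applying Theorem \ref{Theorem 2.3} to $G = A_\Gamma$ and $N = \ker(\chi)$ then shows that $\ker(\chi)$ has the finiteness property under consideration if and only if both $[\chi]$ and $[-\chi]$ lie in the corresponding $\Sigma$-invariant.

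The final step is to collapse this two-point condition to a one-point condition using Lemma \ref{Lemma 1.4}: since $\Sigma^n(A_\Gamma)$ (resp.\ $\Sigma^n(A_\Gamma,R)$) is invariant under $\chi \mapsto -\chi$, the containment $\{[\chi],[-\chi]\} \subseteq \Sigma^n(A_\Gamma)$ is equivalent to $[\chi] \in \Sigma^n(A_\Gamma)$. Combining the three steps yields the claimed equivalence; the homological and homotopical versions run in exactly the same way, and for $n=1$ one uses in addition that $F_1 = FP_1$ and $\Sigma^1(A_\Gamma) = \Sigma^1(A_\Gamma,\mathbb{Z})$.

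I do not expect any genuine obstacle here: the proposition is essentially a repackaging of Theorem \ref{Theorem 2.3} specialized to discrete characters, with Lemma \ref{Lemma 1.4} removing the asymmetry between $[\chi]$ and $[-\chi]$. The only point demanding (routine) care is verifying that the character sphere of the quotient $A_\Gamma/\ker(\chi) \cong \mathbb{Z}$ consists of precisely the two classes $[\chi]$ and $[-\chi]$, so that the symmetry lemma can indeed be invoked.
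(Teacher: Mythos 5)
Your argument is exactly the paper's (implicit) proof: the paper introduces the proposition with ``Another useful result that we deduce from Lemma~\ref{Lemma 1.4} and Theorem~\ref{Theorem 2.3},'' and offers no further argument, so your three steps --- (i) $A_\Gamma$ is $F_\infty$ via the Salvetti complex, (ii) $\{[\psi]\in S(A_\Gamma)\mid \psi(\ker\chi)=0\}=\{[\chi],[-\chi]\}$ because $\chi$ is discrete and surjective, (iii) symmetry of the $\Sigma$-invariants collapses the two-point condition --- are precisely the intended ones, and each is correct.

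One thing worth flagging, however: the proposition as printed pairs the homotopical finiteness property $F_n$ with the homological invariant $\Sigma^n(A_\Gamma,\mathbb{Z})$. Theorem~\ref{Theorem 2.3} gives the two clean equivalences $\ker(\chi)\in FP_n\iff\{[\chi],[-\chi]\}\subset\Sigma^n(A_\Gamma,\mathbb{Z})$ and $\ker(\chi)\in F_n\iff\{[\chi],[-\chi]\}\subset\Sigma^n(A_\Gamma)$, and your argument (with Lemma~\ref{Lemma 1.4}) therefore yields $\ker(\chi)\in FP_n\iff[\chi]\in\Sigma^n(A_\Gamma,\mathbb{Z})$ and $\ker(\chi)\in F_n\iff[\chi]\in\Sigma^n(A_\Gamma)$, but not literally the hybrid statement $\ker(\chi)\in F_n\iff[\chi]\in\Sigma^n(A_\Gamma,\mathbb{Z})$ unless one also knows $\Sigma^n(A_\Gamma)=\Sigma^n(A_\Gamma,\mathbb{Z})$. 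This is almost certainly a typo in the paper (either $F_n$ should be $FP_n$, or $\Sigma^n(A_\Gamma,\mathbb{Z})$ should be $\Sigma^n(A_\Gamma)$); the paper's own later uses in Section~6 invoke the standard pairings. Your proof is right, but you should state explicitly which of the two equivalences you are actually establishing rather than asserting ``the claimed equivalence'' as if the printed hybrid followed directly.
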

Finally, we mention a reduction method that simplifies the study of the $\Sigma^1$-invariants of Artin groups.
\begin{proposition}[\cite{Almeida 2} Lemma 2.14]\label{Proposition 1.6}
	Let $A_\Gamma$ be an Artin group and $\chi:A_\Gamma\to\mathbb{R}$ be a non-zero character. Let $\Gamma'$ be a graph obtained from $\Gamma$ by finitely many of the following transformations:
	\begin{enumerate}
		\item Delete a vertex $v$ such that $\chi(v)=0$, along with all incident edges.
		\item Add an edge with arbitrary even label between any pair of non-adjacent vertices $u,v\in V(\Gamma)$.
		\item Add an edge with arbitrary label between any pair of non-adjacent vertices $u,v\in V(\Gamma)$ with $\chi(u)=\chi(v)$.
		\item Given an edge $e=\lbrace u,v\rbrace$ with label $l$, replace the label by a divisor $\beta\mid l$ such that $\beta$ is even or $\chi(u)=\chi(v)$.
		\item Identify two vertices $u,v$ such that $\chi(u)=\chi(v)$. Now, for pair of edges with labels $n$ and $m$ and the same endpoints replace them with a unique edge with label $\gcd(m,n)$. Next, identify the endpoints of any edge with label equal to $1$ and remove loops. Repeat the process until obtaining a simplicial graph.
	\end{enumerate}
	Then, there exists a natural epimorphism $\pi:A_\Gamma\to A_{\Gamma'}$ and $\chi$ factors trough the induced character $\chi':A_{\Gamma'}\to\mathbb{R}$, i.e. $\chi=\chi'\circ\pi$. In particular, if $[\chi']\notin\Sigma^1(A_{\Gamma'})$ then $[\chi]\notin\Sigma^1(A_\Gamma)$. 
\end{proposition}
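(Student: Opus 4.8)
The plan is to prove this in two layers: an abstract monotonicity statement for $\Sigma^1$ under character-compatible quotients, and then a bare-hands check that each of the five graph moves produces such a quotient. The proposition then follows by composing the resulting epimorphisms.

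For the monotonicity layer I would record the following: if $\pi\colon G\twoheadrightarrow Q$ is a surjection of finitely generated groups and $\chi'\colon Q\to\mathbb{R}$ is a non-zero character with $\chi=\chi'\circ\pi$, then $[\chi]\in\Sigma^1(G)$ implies $[\chi']\in\Sigma^1(Q)$. The proof uses the Bieri-Neumann-Strebel graph-connectivity criterion: for a finite generating set $S$, $[\chi]\in\Sigma^1(G)$ iff the full subgraph of $\mathrm{Cay}(G,S)$ on $\{g\in G:\chi(g)\ge 0\}$ is connected. Since $\pi(S)$ generates $Q$, there is an induced graph morphism $\mathrm{Cay}(G,S)\to\mathrm{Cay}(Q,\pi(S))$; the key point is that $\chi(g)=\chi'(\pi(g))$ depends only on $\pi(g)$, so a vertex $q$ with $\chi'(q)\ge 0$ lifts to a vertex with $\chi\ge 0$, and likewise every edge of the $\chi'\ge 0$ subgraph lifts. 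Hence the restricted morphism onto the $\chi'\ge 0$ subgraph of $\mathrm{Cay}(Q,\pi(S))$ is surjective, and surjective graph morphisms preserve connectivity. Taking the contrapositive yields exactly the last implication of the proposition (with $\chi'\neq 0$ automatic, since $\chi\neq 0$ factors through $\pi$).

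For the transformation layer I would go through the list. Moves 2, 3 and 4 keep the generators and only change relations: 2 and 3 add a braid relator (of even label, resp. of arbitrary label between vertices of equal $\chi$-value), and 4 replaces an edge label $l$ by a divisor $\beta\mid l$. In each case the identity assignment of generators defines a homomorphism $A_\Gamma\to A_{\Gamma'}$ because every relator of $A_\Gamma$ holds in $A_{\Gamma'}$: for 2 and 3 trivially, and for 4 because in the dihedral Artin group $A_{I_2(\beta)}$ one has $\mathrm{prod}_l(u,v)=\mathrm{prod}_l(v,u)$ whenever $\beta\mid l$, both sides reducing to the $(l/\beta)$-th power of $\delta:=\mathrm{prod}_\beta(u,v)=\mathrm{prod}_\beta(v,u)$. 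That $\chi$ factors is a one-line abelianization check: the newly imposed braid relator is killed by $\chi'$, since an even-label relator is killed by every character while an odd-label relator imposes equality of the two $\chi$-values, which is precisely the hypothesis. Move 5 identifies $u,v$ with $\chi(u)=\chi(v)$ and merges parallel edges via $\gcd$; here $u,v$ map to a common generator, and one again invokes the divisor identity together with $\gcd(m,n)\mid m,n$. Move 1 (delete a vertex $v$ with $\chi(v)=0$) is the most delicate: if $v$ is incident to no odd-labelled edge then $v\mapsto 1$, identity elsewhere, is a homomorphism because every braid relator at $v$ has even label and becomes trivial; if $v$ is joined to $u$ by an odd edge then $\chi(u)=\chi(v)=0$, and one routes through move 5, identifying $v$ with $u$, before (or instead of) deleting. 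In every case the value of $\chi$ on a generator equals that of $\chi'$ on its image, so $\chi=\chi'\circ\pi$.

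Finally I would assemble: a $\Gamma'$ obtained by finitely many moves gives a chain of natural epimorphisms, whose composite $\pi\colon A_\Gamma\to A_{\Gamma'}$ is an epimorphism with $\chi=\chi'\circ\pi$; feeding $\pi$ into the monotonicity statement gives $[\chi']\notin\Sigma^1(A_{\Gamma'})\Rightarrow[\chi]\notin\Sigma^1(A_\Gamma)$. The monotonicity layer is short and formal; I expect the real work --- and the only genuine obstacle --- to be the case analysis, specifically getting move 1 right in the presence of odd-labelled edges and nailing down the divisor and $\gcd$ identities in dihedral Artin groups that underlie moves 4 and 5.
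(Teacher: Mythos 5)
The paper does not prove Proposition~\ref{Proposition 1.6}: it is quoted verbatim from Almeida's work with a citation and no argument, so there is no in-paper proof to compare against. Your strategy is the natural one and, as far as it goes, it is the ``right'' proof: reduce everything to the monotonicity fact that $\Sigma^1$ passes to quotients along characters that factor, and then verify move by move that each transformation induces an epimorphism under which $\chi$ factors. Your monotonicity lemma and its Cayley-graph proof are correct and standard (this is essentially \cite[Lemma~B1.11]{Strebel}), and your treatment of moves 2, 3, 4 and 5 is sound --- in particular the divisor identity $\mathrm{prod}_l(u,v)=\mathrm{prod}_l(v,u)=\delta^{l/\beta}$ in $A_{I_2(\beta)}$ for $\beta\mid l$, which you use for moves 4 and 5, is correct for all $\beta$.

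The genuine gap is in move~1, exactly where you flag it as ``most delicate''. If $v$ is dead and incident to an odd-labelled edge $\{v,u\}$ (so $\chi(u)=0$ too), then $v\mapsto 1$ is not a homomorphism onto $A_{\Gamma\setminus\{v\}}$: the odd relator $\mathrm{prod}_{2k+1}(v,u)=\mathrm{prod}_{2k+1}(u,v)$ collapses to $u^{k}=u^{k+1}$, forcing $u=1$, which does not hold in the target. Your proposed repair --- ``route through move 5, identifying $v$ with $u$'' --- does not produce the group the proposition asks for. Identifying $v$ with $u$ yields $A_{\Gamma/(v\sim u)}$, not $A_{\Gamma\setminus\{v\}}$, and in general these are different Artin groups: for instance if $v$ also carries an even edge to some $w$ (label $4$, say) while $\{u,w\}$ has label $6$, then deleting $v$ leaves the label $6$ on $\{u,w\}$, whereas identifying $v$ with $u$ and taking gcds produces label $2$. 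So the chain you build lands in a strictly further quotient than $A_{\Gamma'}$, and the epimorphism $\pi\colon A_\Gamma\to A_{\Gamma'}$ demanded by the statement is not constructed. The uses of move~1 in the rest of the paper always delete the \emph{entire} odd cluster of a dead vertex at once (and when an odd edge meets a dead vertex, the other endpoint is automatically dead), in which case sending the whole cluster to $1$ is a bona fide homomorphism; that is the version of the move you should actually prove, and it avoids the ill-defined single-vertex deletion through an odd edge.
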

\subsection{The $\Sigma^1$-conjecture}
The Bieri–Neumann–Strebel invariant $\Sigma^1$ of a right-angled Artin group (RAAG) is well understood in terms of a combinatorial object known as the living subgraph.
\begin{definition}
	Let $A_\Gamma$ be a RAAG and $\chi:A_\Gamma\to\mathbb{R}$ a non-trivial character. The \textbf{living subgraph} $\mathrm{Liv}^\chi_0$ is defined to be the subgraph of $\Gamma$ induced by the set of vertices $v\in V(\Gamma)$ for which $\chi(v)\neq 0$. These are called \textbf{living vertices}, while those with $\chi(v)=0$ are called \textbf{dead vertices}.
\end{definition}
A sub-graph $\Gamma'\subset\Gamma$ is called \textbf{dominant} if for every vertex $v\in V(\Gamma\setminus\Gamma')$ there exists a vertex $w\in V(\Gamma')$ such that $\lbrace v,w\rbrace\in E(\Gamma)$.
\begin{theorem}[\cite{Meier-VanWyk} Meiner-VanWyk, 1993]\label{Theorem 1.2}
	Let $A_\Gamma$ be a RAAG. Then:
	$$\Sigma^1(A_\Gamma)=\lbrace[\chi]\in S(A_\Gamma)\mid\mathrm{Liv}^\chi_{0}\text{ is connected and dominant}\rbrace$$
\end{theorem}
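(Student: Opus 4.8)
The plan is to reduce the statement to a connectivity property of the Cayley graph. As RAAGs satisfy the $K(\pi,1)$-conjecture, $\mathrm{Sal}(\Gamma)$ is a finite $K(A_\Gamma,1)$ whose universal cover has $1$-skeleton the Cayley graph $C=\mathrm{Cay}(A_\Gamma,V(\Gamma))$; unwinding the definition of $\Sigma^1$ (and noting that connectivity only involves the $1$-skeleton) yields the standard criterion: $[\chi]\in\Sigma^1(A_\Gamma)$ if and only if the full subgraph $C_{\ge 0}$ of $C$ spanned by $\{g\in A_\Gamma:\chi(g)\ge 0\}$ is connected. So it remains to prove that $C_{\ge 0}$ is connected $\iff$ $\mathrm{Liv}^\chi_0$ is connected and dominant (note $\mathrm{Liv}^\chi_0\neq\emptyset$ since $\chi\neq 0$).

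For necessity I argue the contrapositive, exploiting that for RAAGs, killing a set of generators whose complement spans an induced subgraph is a retraction. Killing all dead generators gives a retraction $\rho\colon A_\Gamma\twoheadrightarrow A_{\mathrm{Liv}^\chi_0}$ with $\chi=\chi_0\circ\rho$, where $\chi_0=\chi|_{A_{\mathrm{Liv}^\chi_0}}$ (this uses $\chi(z)=0$ for dead $z$); since $\rho$ sends each generator to a generator or to $1$, it induces a vertex-surjective graph map $C_{\ge 0}(A_\Gamma,\chi)\to C_{\ge 0}(A_{\mathrm{Liv}^\chi_0},\chi_0)$, so connectivity of the former forces connectivity of the latter. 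If $\mathrm{Liv}^\chi_0$ is disconnected, then $A_{\mathrm{Liv}^\chi_0}$ is a nontrivial free product $G\ast H$ with $\chi_0$ nonzero on both factors; choosing $g\in G$ and $h\in H$ with $\chi_0(g),\chi_0(h)>0$, any edge-path from $1$ to $g^{-1}hg$ in $C_{\ge 0}$ projects, in the Bass–Serre tree of $G\ast H$, to a path from the vertex $G$ to the vertex $g^{-1}hG$, which must cross the edge corresponding to $g^{-1}$; hence the original path must visit the element $g^{-1}$, where $\chi_0<0$, a contradiction. If $\mathrm{Liv}^\chi_0$ is connected but not dominant, choose a dead vertex $v$ all of whose neighbours are dead; then $\Delta:=\mathrm{Liv}^\chi_0\cup\{v\}$ is an induced subgraph in which $v$ is isolated, killing the remaining dead generators gives a retraction $A_\Gamma\twoheadrightarrow A_\Delta=A_{\mathrm{Liv}^\chi_0}\ast\langle v\rangle$ through which $\chi$ factors, and the same tree argument applied to $g^{-1}vg$ (with $g$ a living vertex, $\chi(g)>0$, so $\chi(g^{-1}vg)=0$) gives the contradiction.

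Sufficiency is the main obstacle. Assuming $\mathrm{Liv}^\chi_0$ connected and dominant, I would show every $g$ with $\chi(g)\ge 0$ is joined to $1$ inside $C_{\ge 0}$ by a pushing argument. After replacing each vertex $v$ with $\chi(v)<0$ by $v^{-1}$, we may assume $\chi(v)\ge 0$ for all $v$, with strict inequality exactly on living vertices; then the path through $C$ given by a word for $g$ can only descend along inverses of living letters. At a negative local minimum one wants to raise the path without moving its endpoints: when the two flanking letters commute this is a shuffle; in general one prepends a high block $u^{k}$ with $u$ a living vertex, carries it rightwards — commuting it past the neighbouring letters, relaying from one living vertex to an adjacent one whenever it meets a letter it does not commute with — and cancels it again at the end. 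Here connectivity of $\mathrm{Liv}^\chi_0$ makes the relaying possible and dominance ensures the dead letters encountered always sit next to a living vertex to relay through, while $\chi(u)>0$ allows $k$ to be taken large enough to keep the modified path non-negative; iterating, controlled by a complexity such as word length plus total descent below $0$, removes all negative excursions. Alternatively one first reduces to $\mathrm{Liv}^\chi_0=\Gamma$ by viewing each dominated dead vertex $z$ as the stable letter of an HNN splitting $A_{\Gamma\smallsetminus\{z\}}\ast_{A_{N(z)}}$ with $z\in\ker\chi$ and propagating $\Sigma^1$-membership across it, leaving only a connected graph with a nowhere-zero character, where the push is shortest.

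I expect the detour/relay construction in the sufficiency direction to be the genuinely delicate point; the Cayley-graph criterion, the retractions, and the free-product obstruction are all soft. As a consistency check one may note that both sides of the asserted equivalence define open subsets of $S(A_\Gamma)$ — perturbing $\chi$ only turns dead vertices living, and under dominance these new living vertices attach to the old living part, keeping it connected and dominant — so it would even suffice to treat discrete $\chi$, where by Proposition~\ref{Proposition 2.8} and Lemma~\ref{Lemma 1.4} the condition $[\chi]\in\Sigma^1(A_\Gamma)$ amounts to finite generation of $\ker\chi$; this is the form in which the free-product and HNN computations above are most transparent.
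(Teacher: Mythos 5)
The paper does not prove this theorem; it merely cites Meier--VanWyk, so there is no internal proof to compare against. Evaluating your argument on its own terms: the reduction to connectivity of the Cayley subgraph $C_{\geq 0}$ is the standard Bieri--Neumann--Strebel characterization of $\Sigma^1$ and is used correctly, and your necessity argument is sound. In particular, the retraction $\rho\colon A_\Gamma\twoheadrightarrow A_{\mathrm{Liv}_0^\chi}$ (killing dead generators) correctly transports connectivity of $C_{\geq 0}$ downward, and the Bass--Serre obstruction in a free product $G\ast H$ --- that any path from $1$ to $g^{-1}hg$ must visit the cut vertex $g^{-1}$, which has $\chi<0$ --- is exactly right; the same device applied to $A_{\mathrm{Liv}_0^\chi}\ast\langle v\rangle$ handles failure of dominance. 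This necessity argument is actually more elementary and self-contained than what typically appears in the literature, which tends to deduce necessity from a more general criterion on graphs of groups.

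The gap is in the sufficiency direction, and you flag it yourself. What you give is a \emph{plan} for a pushing argument: prepend a high block $u^k$ of a living generator, relay it through $\mathrm{Liv}_0^\chi$ past letters it fails to commute with, cancel at the end, and control the process by a complexity function. None of the needed lemmas are stated or verified: that the relaying can always proceed while keeping every partial-word value $\geq 0$; that a single relay step actually strictly reduces your proposed complexity (word length plus total negative excursion) --- prepending $u^k$ and carrying it along temporarily \emph{increases} length, so it is not obvious that this terminates; and that dominance is invoked correctly (a dead letter may have several living neighbours, none of which is the relay letter currently being carried, so ``relaying through'' requires a change of living generator, which needs connectivity of $\mathrm{Liv}_0^\chi$ and a careful bookkeeping you have not done). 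The alternative HNN reduction is also only gestured at: you would need a precise statement of the form ``if $G=H\ast_{K}$ is an HNN extension with stable letter $t\in\ker\chi$, $\chi|_K\neq 0$, and $[\chi|_H]\in\Sigma^1(H)$, then $[\chi]\in\Sigma^1(G)$,'' and even after all dead vertices are stripped you still owe a proof for the base case of a connected graph with a nowhere-zero character. Finally, your openness/density ``consistency check'' is not a valid reduction as stated: two open subsets of the sphere agreeing on a dense set need not coincide; what actually makes the discrete reduction work is the extra input that one inclusion is already known (Meier's Proposition~\ref{Proposition 1.5}.1), which you do not invoke. In short: necessity is complete, sufficiency is a believable outline but not a proof.
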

For general Artin groups, the definition of living subgraph was generalized as follows:
\begin{definition}
	Let $A_\Gamma$ be an Artin group and $\chi:A_\Gamma\to\mathbb{R}$ a character. The \textbf{living subgraph}, denoted as $\mathrm{Liv}^\chi$, is obtained from $\mathrm{Liv}^\chi_0$ by deleting every edge $e=\lbrace u,v\rbrace$ such that $l(e)\geq 4$ is even and $\chi(u)+\chi(v)=0$. The deleted edges are called \textbf{dead edges} and the remaining edges are called \textbf{living edges}.
\end{definition}
This definition reduces to the previous one in the RAAG case (i.e., when all edge labels are $2$).  Somewhat counterintuitively, any edge incident to a dead vertex is automatically considered living.

The natural question arises: does Theorem \ref{Theorem 1.2} extend to arbitrary Artin groups? Partial results in this direction have been obtained. In particular, Meier proved the following:
\begin{proposition}[\cite{Meier} Meier, 1995]\label{Proposition 1.5} Let $A_\Gamma$ be an Artin group and $\chi:A_\Gamma\to\mathbb{R}$ a character. Then:
	\begin{enumerate}
		\item If $\mathrm{Liv}^\chi$ is connected and dominant then $[\chi]\in\Sigma^1(A_\Gamma)$.
		\item If $[\chi]\in\Sigma^1(A_\Gamma)$ then $\mathrm{Liv}^\chi_0$ is connected and dominant. In particular, $\mathrm{Liv}^\chi$ is dominant.
	\end{enumerate}
\end{proposition}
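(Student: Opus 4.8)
The main tool is the Bieri--Strebel $\Sigma^1$-criterion (cf.\ \cite{Strebel}): for a group $G$ with finite generating set $S$ and a nonzero character $\chi$, one has $[\chi]\in\Sigma^1(G)$ if and only if the full subgraph $\mathcal C_\chi$ of $\mathrm{Cay}(G,S)$ spanned by $\{g\in G:\chi(g)\ge 0\}$ is connected; I take $S=V(\Gamma)$. I also record the elementary fact that $\Sigma^1$ of a nontrivial free product $G_1*G_2$ of nontrivial groups is empty: picking a generator $a$ of a factor with $\chi(a)\ne 0$, no reduced word beginning with $a^{-1}$ can be joined to $1$ inside $\{\chi\ge 0\}$, since right multiplication never produces a leading letter $a^{-1}$ without first visiting the vertex $a^{-1}$, where $\chi$ is negative, so $\mathcal C_\chi$ is disconnected.

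\emph{Statement (2).} Since ``dominant'' depends only on the vertex set, and $\mathrm{Liv}^\chi$, $\mathrm{Liv}^\chi_0$ have the same vertices, it suffices to show that $[\chi]\in\Sigma^1(A_\Gamma)$ forces $\mathrm{Liv}^\chi_0$ to be connected and dominant (the ``in particular'' is then automatic). Arguing the contrapositive, I would use Proposition \ref{Proposition 1.6}(1) to delete dead vertices. If $\mathrm{Liv}^\chi_0$ is disconnected, delete all of them: the quotient is $A_{\mathrm{Liv}^\chi_0}$, a nontrivial free product of the Artin groups on its (at least two) components, so $[\chi]\notin\Sigma^1(A_\Gamma)$. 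If instead some dead vertex $v$ has only dead neighbours, delete every dead vertex except $v$: the quotient is $\mathbb{Z}*A_{\mathrm{Liv}^\chi_0}$ with the second factor nontrivial (as $\chi\ne 0$), again a nontrivial free product, so $[\chi]\notin\Sigma^1(A_\Gamma)$.

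\emph{Statement (1).} Assume $\mathrm{Liv}^\chi$ is connected and dominant; the goal is that $\mathcal C_\chi$ is connected. The plan is to combine the behaviour of $\Sigma^1$ under amalgamated products and HNN extensions -- if $\chi$ restricts into the $\Sigma^1$'s of the vertex groups and is nonzero on the edge group, then $[\chi]$ lies in $\Sigma^1$ of the total group (cf.\ \cite{Bieri-Neumann-Strebel}, \cite{Strebel}) -- with a decomposition of $A_\Gamma$ into a graph of Artin groups, built by iteratively splitting off a star $A_{\mathrm{star}_\Gamma(x)}$ along $A_{N_\Gamma(x)}$: one chooses $x$ to be a non-cut vertex of $\mathrm{Liv}^\chi$ while no dead vertices remain, and otherwise a dead vertex $x$, which (dominance) has a living neighbour, so the edge group $A_{N_\Gamma(x)}$ contains a living vertex and $\chi$ is nonzero on it -- exactly what the gluing results require. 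Iterating reduces the statement to the two-generator pieces $A_{\{v,w\}}=A_{I_2(m)}$ with $\mathrm{Liv}^\chi$ connected and dominant, namely those with at most one dead vertex and no dead edge; here one checks $[\chi]\in\Sigma^1$ by hand, computing $\ker\chi$ explicitly and finding it finitely generated (a free group when both vertices live and $m$ is odd). The RAAG situation (Theorem \ref{Theorem 1.2}) is recovered as the subcase without dead edges.

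\emph{Main obstacle.} The crux is propagating ``$\mathrm{Liv}$ connected and dominant'' down the entire decomposition: one must ensure the choices never leave, in a remaining piece, an undominated dead vertex or a dead edge that disconnects things. Concretely this is a ``braid detour'' estimate: for a dead vertex $v$ with living neighbour $w$ and $m=l(\{v,w\})$, the relation $\underbrace{vwv\cdots}_m=\underbrace{wvw\cdots}_m$ rewrites any occurrence of $v^{\pm1}$ as a $\chi$-value-$0$ path whose excursions below $0$ are bounded below in terms of $\chi(w)$ and $m$, so $v$ does not disconnect $\mathcal C_\chi$; the real work is keeping such bounds uniform across all pieces produced by the recursion. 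The necessity direction, the free-product fact, and the dihedral base cases are routine by comparison.
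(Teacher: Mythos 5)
The paper does not give its own proof of this Proposition — it is cited directly from Meier's 1997 paper \cite{Meier} — so I'll evaluate your proposal on its own merits. Part (2) is essentially correct: the reduction via Proposition~\ref{Proposition 1.6}(1) to a nontrivial free product, combined with the standard fact (cf.\ Strebel's notes \cite{Strebel}) that $\Sigma^1$ of such a group is empty, does give a clean argument; your hand-waved justification of that free-product fact is a bit too loose (a reduced word in the Cayley graph can acquire an $a^{-1}$ in a non-leading syllable without visiting $a^{-1}$ itself), but the fact is true and citable.

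Part (1) has a real gap that you partially but not fully identify. Beyond the ``braid detour estimate'' you flag as the main obstacle, the iterated star decomposition $A_\Gamma\cong A_{\mathrm{star}(x)}*_{A_{\mathrm{lk}(x)}}A_{\Gamma\setminus\{x\}}$ is \emph{trivial} whenever $x$ is adjacent to every other vertex, i.e.\ $\mathrm{star}(x)=\Gamma$. In particular, when $\Gamma$ is a complete graph on $\geq 3$ vertices — for instance, triangular Artin groups $A_{MNP}$, whose $\Sigma^1$ is already nontrivial to determine — the recursion never reduces to two-generator pieces. Meier in fact treated complete graphs as a genuinely separate case. So the terminal cases of your induction are not what you claim; you would need a base case covering all $\Gamma$ for which the star of every vertex is all of $\Gamma$, and no argument is offered there. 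Also, the amalgam decomposition relies on $A_{\mathrm{lk}(x)}\hookrightarrow A_\Gamma$ (Van der Lek's theorem on parabolic subgroups), which should be cited. Finally, Meier's actual proof does not run through a graph-of-groups reduction at all: he applies the Bieri--Strebel Cayley-graph connectivity criterion directly, constructing paths in $\{g\in G:\chi(g)\geq 0\}$ using the Artin relations — precisely the ``braid detour'' you defer — and this is the heart of the argument rather than a deferrable estimate.
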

However, Meier was unable to show that $[\chi] \in \Sigma^1(A_\Gamma)$ implies that $\mathrm{Liv}^\chi$ is connected. Although several people worked on this problem throughout the years, it was first explicitly formulated as a conjecture in 2018. Applying Proposition \ref{Proposition 1.5}, the conjecture can be stated equivalently as follows:
\begin{conjecture}[\cite{Almeida}, Almeida]
	Let $A_\Gamma$ be an Artin group and $\chi:A_\Gamma\to\mathbb{R}$. Then:
	 $$\mathrm{Liv}^\chi\textit{ is disconnected }\Rightarrow[\chi]\notin\Sigma^1(A_\Gamma)$$
\end{conjecture}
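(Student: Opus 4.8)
The plan is to establish the conjecture for the two families of Theorem~\ref{Theorem 1.1} (the general case I leave open). Fix $A_\Gamma$ in one of these families and $\chi$ with $\mathrm{Liv}^\chi$ disconnected. I would first reduce to the case that $\chi$ is \emph{discrete}. Since $\Sigma^1(A_\Gamma)$ is open, its complement is closed; and the set of characters $\psi$ having the same dead vertices and dead edges as $\chi$ is an intersection of rational hyperplanes in $\mathrm{Hom}(A_\Gamma,\mathbb{R})$ containing $\chi$, inside which the discrete characters are dense and the generic member satisfies $\mathrm{Liv}^\psi=\mathrm{Liv}^\chi$ (any such $\psi$ has $\mathrm{Liv}^\psi\subseteq\mathrm{Liv}^\chi$, with equality away from finitely many proper rational hyperplanes). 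Approximating $\chi$ by such discrete $\psi$ and using openness of $\Sigma^1$, it suffices to treat discrete $\chi$. Moreover, if $\mathrm{Liv}^\chi_0$ is itself disconnected we are done by Meier's Proposition~\ref{Proposition 1.5}(2); so I may assume $\mathrm{Liv}^\chi_0$ is connected, whence every edge of $\Gamma$ joining two distinct components of $\mathrm{Liv}^\chi$ is a dead edge (even label $\ge 4$ with $\chi(u)+\chi(v)=0$).

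Next I would normalise $\Gamma$ via Proposition~\ref{Proposition 1.6}: delete the dead vertices not lying on any path between two components of $\mathrm{Liv}^\chi$, and lower the labels of living edges to a minimal even divisor (resp.\ to $2$). The essential point here is to verify that these moves stay inside the given family — for the prime family this is immediate, while for the balanced family it requires checking that each reduction descends to $\Gamma_{>2}^{\mathrm{even}}$ and respects some balanced colouring. The upshot is a graph in the same family, with the same homological obstruction for $\ker\chi$ and with $\mathrm{Liv}^\chi$ still disconnected, which I may now assume is ``as small as possible''.

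The heart of the argument is then a homology computation. Using the chain complex for Artin kernels of Section~3 (equivalently a Reidemeister--Schreier complex built from the presentation $2$-complex, which suffices for $H_1$ and needs no $K(\pi,1)$ hypothesis), I would present $H_1(\ker\chi,\mathbb{F})$ as an $\mathbb{F}[t^{\pm 1}]$-module, $t$ being conjugation by a lift of a generator of $\mathrm{im}\,\chi\cong\mathbb{Z}$. Vertices give generators and edges give relators; a dead edge $e$ with label $2m_e$ contributes, after rewriting the braid relator exactly as in the dihedral model (where $\ker(\langle u,v\mid(uv)^{m}=(vu)^{m}\rangle\to\mathbb{Z})=\langle x_n\mid x_{n+1}^{m}=x_n^{m}\rangle$), relations whose only non-unit coefficient is $m_e$. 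The hypotheses now let me choose a field $\mathbb{F}$ over which \emph{all} dead-edge relations degenerate: for the prime family take $\mathbb{F}=\mathbb{F}_p$ with $p\mid l(e)/2$ for every even edge of label $>2$; for the balanced family one instead argues colour by colour, exploiting the prime $p_i$ attached to colour $i$ by the $n$-balanced colouring and the combinatorial condition (each closed path of even length meets each colour in exactly $0$ or $2$ edges) to select a colour $i$ for which the colour-$i$ dead edges bridging the components of $\mathrm{Liv}^\chi$ become trivial over $\mathbb{F}_{p_i}$ while still disconnecting the colour-$i$ collapsed living subgraph. Over this $\mathbb{F}$ the presentation of $H_1(\ker\chi,\mathbb{F})$ splits along the bridging dead edges, so the part supported on one component of $\mathrm{Liv}^\chi$ persists as a direct summand; as in the dihedral computation this summand contains a free rank-one $\mathbb{F}[t^{\pm1}]$-module, giving $\dim_{\mathbb{F}}H_1(\ker\chi,\mathbb{F})=\infty$. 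Hence $\ker\chi$ is not finitely generated, and since $\Sigma^1$ is symmetric (Lemma~\ref{Lemma 1.4}), the Bieri--Strebel characterisation (or Proposition~\ref{Proposition 2.4} when $K(\pi,1)$ holds) yields $[\chi]\notin\Sigma^1(A_\Gamma)$.

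The main obstacle I anticipate lies entirely in the balanced case: converting the path-colouring condition into the algebraic assertion that a single prime $p_i$ simultaneously kills every relation capable of reconnecting the two sides while leaving an infinite-dimensional residue. A secondary, more bookkeeping-heavy, difficulty is checking that the Proposition~\ref{Proposition 1.6} reductions preserve the balanced hypothesis. The prime family should by contrast fall out quickly once the dihedral-type relator rewriting is set up.
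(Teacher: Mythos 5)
Your strategy for the prime family is essentially the correct one (and agrees with what the paper does in spirit, and with Escartín--Martínez): over a field of characteristic $p$ with $p\mid l(e)/2$ for all even labels $>2$, the normalized boundary of each dead edge $\{u,v\}$ becomes $\frac{l(e)}{2}\cdot(\widetilde\sigma_u-\widetilde\sigma_v)=0$, so all bridging relations vanish and $H_1(\ker\chi,\mathbb F_p)$ acquires a free $\mathbb F_p[t^{\pm1}]$-summand. The reduction to discrete $\chi$ via Proposition~\ref{Proposition 2.9} and the normalisations via Proposition~\ref{Proposition 1.6} also match the paper.

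The balanced case, however, has a genuine gap: no single prime field $\mathbb F_{p_i}$ can work. Over $\mathbb F_{p_i}$ only the colour-$i$ dead edges have vanishing boundary; a dead edge of any other colour $j$ still contributes the relation $\widetilde\sigma_u-\widetilde\sigma_v$ (its coefficient $l(e)/2$ is a unit mod $p_i$), so it reconnects its endpoints. Here is a concrete failure. Let $\Gamma_1'$ and $\Gamma_2'$ each be a triangle with all labels $2$, with vertices $v_1,v_3,v_5$ and $v_2,v_4,v_6$ respectively, and let the crossing dead edges be the $6$-cycle $e_k=\{v_k,v_{k+1}\}$, $k$ mod $6$, coloured so that $e_1,e_4$ have colour $1$, $e_2,e_5$ colour $2$, and $e_3,e_6$ colour $3$ (a valid $3$-balanced colouring of $\Lambda$). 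Over $\mathbb F_{p_1}$ the surviving identifications from $\Gamma_1',\Gamma_2',e_2,e_3,e_5,e_6$ collapse all six vertices to one point, so $H_1$ vanishes; the same happens for each other single prime by symmetry. The paper's resolution is precisely what you flagged as an obstacle: one must extend $\chi$ to $\psi=(\chi,\varphi):A_\Gamma\to\mathbb Z\times G$ with $G=C_{p_1}\times\cdots\times C_{p_n}$, constructing $\varphi$ so that $\varphi(uv)=g_{\mathcal C(e)}^{\pm1}$ on each crossing edge $e=\{u,v\}$ (this is where the parity map of Lemma~\ref{ParityLemma} is used), and then detecting nonvanishing of $H_1(\ker\psi,\mathbb C)$ via a \emph{single} character $\mu:G\to\mathbb C^\times$ sending each $g_i$ to a primitive $p_i$-th root of unity, which kills the geometric-sum coefficients of all colours at once (Theorem~\ref{Corollary 4.7} and Corollary~\ref{Corllary4.3}). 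Without passing to this finite abelian cover the ``colour by colour'' computation cannot close.
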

This statement is known as the $\Sigma^1$-conjecture for Artin groups. The conjecture has been proved for several families of Artin groups:
\begin{theorem}
	Let $A_\Gamma$ be an Artin group and $\chi:A_\Gamma\to\mathbb{R}$ a non-zero character. Then, $[\chi]\in\Sigma^1(A_\Gamma)$ if and only if $\mathrm{Liv}^\chi$ is connected and dominant in the following cases:
	\begin{enumerate}
		\item $\Gamma$ is a complete graph and all edges have the same label. (\cite{Meier} Meier, 1997)
		\item $\Gamma$ is a tree. (\cite{Meier-Meiner-VanWyk 2} Meier-Meinert-VanWyk, 2001)
		\item $\Gamma$ is connected and $\pi_1(\Gamma)$ is free of rank $1$. (\cite{Almeida-Kochloukova} Almeida-Kochloukova, 2015)
		\item $\Gamma$ belongs to a certain family of complete graphs with $4$ vertices. (\cite{Almeida-Kochloukova 2} Almeida-Kochloukova, 2015)
		\item $\Gamma$ lies in a specific family of minimal graphs with $\pi_1(\Gamma)$ of arbitrary rank. (\cite{Almeida 2} Almeida, 2017)
		\item $\Gamma$ is connected and $\pi_1(\Gamma)$ is free of rank $2$. (\cite{Almeida} Almeida, 2018)
		\item $\Gamma$ is even and every closed reduced path with edge labels all greater than $2$ has odd length. (\cite{Kochloukova} Kochloukova, 2021)
		\item  There exists a prime number $p$ dividing $\frac{l(e)}{2}$ for every edge $e \in E(\Gamma)$ with $l(e) \geq 4$ even. (\cite{Escartin} Escartín-Martinez, 2023)
	\end{enumerate}
\end{theorem}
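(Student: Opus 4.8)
The statement is a compendium of results already in the literature, so in the strict sense one proves it by quoting the references listed in each case; what is worth describing is the common skeleton and where the genuine work sits. By Proposition~\ref{Proposition 1.5} the implication ``$\mathrm{Liv}^\chi$ connected and dominant $\Rightarrow[\chi]\in\Sigma^1(A_\Gamma)$'' holds for \emph{every} Artin group, and the same proposition shows that $[\chi]\in\Sigma^1(A_\Gamma)$ already forces $\mathrm{Liv}^\chi_0$ to be connected and $\mathrm{Liv}^\chi$ to be dominant. Hence in each of the eight families the only thing left to establish is the contrapositive of the surviving implication, namely
$$\mathrm{Liv}^\chi\ \text{disconnected}\ \Longrightarrow\ [\chi]\notin\Sigma^1(A_\Gamma),$$
and this single assertion is what must be verified family by family.

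To attack it, the plan is first to normalise $\chi$ via Proposition~\ref{Proposition 1.6}: deleting dead vertices, identifying vertices joined by odd-labelled paths, and replacing labels by even divisors all exhibit $A_\Gamma$ as mapping onto a quotient without enlarging $\Sigma^1$ and without changing whether $\mathrm{Liv}^\chi$ is connected, so one may assume $\chi$ is nowhere zero and, in the even families, that $\Gamma$ carries only even labels. Since $\Sigma^1(A_\Gamma)=\Sigma^1(A_\Gamma,\mathbb{F})$ for any field $\mathbb{F}$, since $\Sigma^1$ is open, and since the discrete characters are dense in $S(A_\Gamma)$, it is enough to treat discrete $\chi\colon A_\Gamma\to\mathbb{Z}$ and, through Proposition~\ref{Proposition 2.4}, to produce a field $\mathbb{F}$ with $\dim_\mathbb{F}H_1(\ker\chi,\mathbb{F})=\infty$. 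The chain complex for the Artin kernel constructed in Section~3 computes $H_\ast(\ker\chi)$ explicitly, so the task becomes: once $\mathrm{Liv}^\chi$ splits into two nonempty pieces, exhibit infinitely many linearly independent $1$-cycles in that complex --- morally each ``long'' relator $e=\{u,v\}$ with even label and $\chi(u)+\chi(v)=0$ contributes a periodic family of cycles that cannot be killed when $u$ and $v$ fall in different components.

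Cases (1)--(6) are then settled by the geometric and Mayer--Vietoris type arguments of Meier, Meier--Meinert--VanWyk and Almeida in the cited papers, while cases (7) and (8) are exactly the homological ones: they are Theorem~\ref{Theorem 1.1} specialised to the even family of Kochloukova and to the prime-divisibility family, so they also fall out of the representation-theoretic criterion proved later in this paper. I expect the main obstacle to be precisely this homological step: no uniform argument covers all eight cases, and the boundary operators of the Artin-kernel complex simplify enough to display an infinite-dimensional $H_1$ only under an even-ness or common-prime hypothesis on the labels; outside of that --- and especially when $\pi_1(\Gamma)$ has large rank and the labels are mixed --- controlling the kernels of those boundary maps is the crux, which is what the finite-abelian representation theory of the later sections is designed to handle.
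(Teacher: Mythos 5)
Your proposal is correct and matches the paper's treatment exactly: this theorem is a survey of known results, and the paper offers no proof beyond the eight citations you reproduce, so quoting the references is precisely what is intended. Your additional commentary on the common skeleton (Proposition~\ref{Proposition 1.5} handling the forward direction and dominance, Proposition~\ref{Proposition 1.6} normalising $\chi$, and the Artin-kernel chain complex supplying the $H_1$ computation for cases (7)--(8)) is an accurate description of the strategy underlying these results, but it is not something the paper itself argues at this point.
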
 
An important simplification of the conjecture, which will be used later, is the following reduction to discrete characters.
\begin{proposition}[\cite{Almeida-Kochloukova}, Corollary 2.9]\label{Proposition 2.9}
	Let $A_\Gamma$ be an Artin group. Suppose that for every discrete character $\chi:A_\Gamma\to\mathbb{Z}$ the following holds:
	$$\left[\mathrm{Liv}_0^\chi\text{ is connected and }\mathrm{Liv}^\chi\text{ is disconnected }\right]\Rightarrow[\chi]\notin\Sigma^1(A_\Gamma),$$
	then, the $\Sigma^1$-conjecture holds for $A_\Gamma$.
\end{proposition}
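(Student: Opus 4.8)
The plan is to reduce an arbitrary real character to discrete ones by a density argument inside a suitable rational subspace of character space, and then invoke the openness of $\Sigma^1(A_\Gamma)$ to pull the conclusion back. Recall (Proposition \ref{Proposition 1.5}) that the $\Sigma^1$-conjecture for $A_\Gamma$ is equivalent to the implication ``$\mathrm{Liv}^\chi$ disconnected $\Rightarrow[\chi]\notin\Sigma^1(A_\Gamma)$'' for every non-zero $\chi:A_\Gamma\to\mathbb{R}$. So fix such a $\chi$ with $\mathrm{Liv}^\chi$ disconnected. If $\mathrm{Liv}_0^\chi$ is already disconnected, then Proposition \ref{Proposition 1.5}(2) gives $[\chi]\notin\Sigma^1(A_\Gamma)$ and we are done; hence I would assume $\mathrm{Liv}_0^\chi$ connected and $\mathrm{Liv}^\chi$ disconnected, which is exactly the hypothesis of the statement, but now for a possibly non-discrete $\chi$.

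The next step is to set up the right space to perturb in. Identify $\mathrm{Hom}(A_\Gamma,\mathbb{R})$ with the $\mathbb{Q}$-rational subspace of $\mathbb{R}^{V(\Gamma)}$ defined by $\psi(u)=\psi(v)$ for every odd-labelled edge $\{u,v\}$, and let $L\subseteq\mathrm{Hom}(A_\Gamma,\mathbb{R})$ be the further $\mathbb{Q}$-rational subspace cut out by the homogeneous equations $\psi(v)=0$ for every dead vertex $v$ of $\chi$ and $\psi(u)+\psi(v)=0$ for every dead edge $e=\{u,v\}$ of $\chi$. Then $\chi\in L$, and for \emph{every} $\psi\in L$ the dead vertices and dead edges of $\chi$ remain dead for $\psi$. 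Let $U\subseteq L$ be the complement of the finitely many coordinate hyperplanes $\{\psi:\psi(v)=0\}$ with $v$ a living vertex of $\chi$; since $\chi\in U$, none of these hyperplanes contains $L$, so $U$ is a non-empty open dense subset of $L$. For any $\psi\in U$ one checks directly from the construction that $\mathrm{Liv}_0^\psi=\mathrm{Liv}_0^\chi$ (same living and dead vertices) and $\mathrm{Liv}^\psi\subseteq\mathrm{Liv}^\chi$ (same vertex set, at least the dead edges of $\chi$ deleted); in particular $\mathrm{Liv}_0^\psi$ is connected and $\mathrm{Liv}^\psi$ is disconnected.

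Then I would pass to discrete characters. Because $L$ is $\mathbb{Q}$-rational, its rational points are dense in $L$, hence dense in the open set $U$, and every rational point of $U$ becomes a discrete character after multiplication by a suitable positive integer, which changes neither its class in $S(A_\Gamma)$ nor its living subgraphs. Choosing discrete $\chi_n$ obtained this way with $\chi_n\to\chi$ in character space gives $[\chi_n]\to[\chi]$ in $S(A_\Gamma)$, with each $\chi_n$ satisfying ``$\mathrm{Liv}_0^{\chi_n}$ connected and $\mathrm{Liv}^{\chi_n}$ disconnected''. The hypothesis of the proposition then yields $[\chi_n]\notin\Sigma^1(A_\Gamma)$ for all $n$, and since $\Sigma^1(A_\Gamma)$ is open in $S(A_\Gamma)$ its complement is closed, so $[\chi]\notin\Sigma^1(A_\Gamma)$. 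Combined with Proposition \ref{Proposition 1.5}, this gives the full $\Sigma^1$-conjecture for $A_\Gamma$.

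I expect the only real subtlety to be the choice of $L$: a generic perturbation of $\chi$ destroys the linear equalities $\chi(u)+\chi(v)=0$ that make the edges dead (and may revive dead vertices), so the approximation must be confined to the linear locus where these finitely many rational conditions are frozen, while one must simultaneously verify that this locus still avoids the coordinate hyperplanes indexed by the living vertices — which is automatic since $\chi$ itself lies in $L$ and is non-zero on each living vertex. Everything else is routine density and openness.
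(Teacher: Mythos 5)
The paper cites this proposition from Almeida--Kochloukova without giving a proof, so there is nothing internal to compare against; but your argument is correct and is almost certainly the intended one. The reduction via Proposition~\ref{Proposition 1.5} to the implication ``$\mathrm{Liv}^\chi$ disconnected $\Rightarrow[\chi]\notin\Sigma^1$'' is right, the dichotomy on whether $\mathrm{Liv}_0^\chi$ is connected dispatches the easy half, and the key insight --- that one must perturb $\chi$ only within the rational linear locus $L$ that freezes the dead vertices and dead edges, rather than perturbing generically in $\mathrm{Hom}(A_\Gamma,\mathbb{R})$ --- is exactly the subtlety that makes the density-plus-openness argument go through. You correctly observe that on $L\cap U$ one has $\mathrm{Liv}_0^\psi=\mathrm{Liv}_0^\chi$ (same vertices) and $\mathrm{Liv}^\psi\subseteq\mathrm{Liv}^\chi$ (possibly more dead edges), which preserves disconnectedness even though it might not preserve equality of living graphs, and that this is enough. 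The passage to discrete characters by scaling rational ones and the closedness of $S(A_\Gamma)\setminus\Sigma^1(A_\Gamma)$ are used correctly. I have no corrections.
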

\subsection{The $\Sigma^2$-conjecture}\label{Section 2.4}
To understand the $\Sigma^2$-invariant for Artin groups, we must introduce a more subtle condition than in the $\Sigma^1$ case. This requires two preliminary definitions.
\begin{definition}
	Let $A_\Gamma$ be an Artin group and $\chi:A_\Gamma\to\mathbb{R}$ a character. An edge $e=\lbrace v,w\rbrace$ is called \textbf{$3$-dead} if $l(e)=4$, $e$ belongs to a standard parabolic subgroup of type $\mathbb{B}_3$ and $2\chi(v)+\chi(w)=0$, where $w$ is the vertex adjacent to both $v$ and the edge labelled $3$ in the $\mathbb{B}_3$ subgraph.
\end{definition}
\begin{definition}
	Let $A_\Gamma$ be an Artin group and $\chi:A_\Gamma\to\mathbb{R}$ a character. 
	Let $\Gamma_1\subset\Gamma$ be a subgraph and $v\in V(\Gamma)$. The \textbf{link} $\lk_{\Gamma_1}(v)$ is the subgraph of $\Gamma_1$ induced by the set of vertices of $\Gamma_1$ that are adjacent to $v$ in $\Gamma$.
	
	 We define the \textbf{spherical link complex} $\slk_{\Gamma_1,v}^\chi$ as the simplicial complex having one $m$-cell for each $(m+1)$-clique $X\subseteq\lk_{\Gamma_1}(v)$ such that both $X$ and $X\cup\lbrace v\rbrace$ are spherical subgraphs of $\Gamma$.
\end{definition}
Using this complex, Escartín–Martínez provided a criterion in \cite{Escartin} to determine whether a character lies in $\Sigma^n(A_\Gamma)$ for Artin groups satisfying the $K(\pi,1)$-conjecture. While this condition is technical in general, for the case $n = 2$ it simplifies to a more accessible criterion, given in Theorem~\ref{Theorem 1.2}.

In fact, by combining the main result of \cite{Escartin} with the validity of the $\Sigma^1$-conjecture for tree-based Artin groups, one obtains a new proof of \cite[Theorem 5.1]{Meier-Meiner-VanWyk 2}, which asserts that if $\Gamma$ is a tree then $\Sigma^1(A_\Gamma)=\Sigma^2(A_\Gamma,\mathbb{Z})=\Sigma^2(A_\Gamma)$.

There is also another case where the $\Sigma^2$-conjecture is known to be true:
\begin{theorem}[\cite{Escartin} Theorem 7.4]\label{Theorem 4.1} Let $A_\Gamma$ be an even Artin group satisfying the $K(\pi,1)$-conjecture. Assume that there exists a prime number $p$ dividing $\frac{l(e)}{2}$ for every $e\in E(\Gamma)$ with $l(e)>2$. Then, $A_\Gamma$ satisfies the homological $\Sigma^2$-conjecture.
\end{theorem}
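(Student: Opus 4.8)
Since Theorem~\ref{Theorem 1.2 } already gives the implication ``(1)--(3) $\Rightarrow[\chi]\in\Sigma^2(A_\Gamma,\mathbb{Z})$'' for every Artin group satisfying the $K(\pi,1)$-conjecture, the only content of Theorem~\ref{Theorem 4.1} is the converse, and I would prove its contrapositive: supposing $A_\Gamma$ is even with a prime $p$ dividing $l(e)/2$ for every edge with $l(e)>2$, and that one of (1)--(3) fails for $\chi$, I would show $[\chi]\notin\Sigma^2(A_\Gamma,\mathbb{Z})$. The first step is to reduce to discrete characters. The truth of (1)--(3) depends on $\chi$ only through which of the linear conditions $\chi(v)=0$ (dead vertices) and $\chi(u)+\chi(v)=0$ with $l(\{u,v\})\geq4$ (dead edges) hold --- an even Artin group has no $\mathbb{B}_3$ parabolic, hence no $3$-dead edges, so this is the complete list. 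Letting $H\subseteq\operatorname{Hom}(A_\Gamma,\mathbb{R})$ be the subspace cut out by exactly the equations satisfied by $\chi$, a generic rational point of $H$ near $\chi$ satisfies the same equations and no further ones, hence has the same living subgraph and dead vertices, so (1)--(3) still fail for it, and rescaling makes it discrete. As $\Sigma^2(A_\Gamma,\mathbb{Z})$ is open in $S(A_\Gamma)$, it suffices to treat discrete $\chi:A_\Gamma\to\mathbb{Z}$.

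For discrete $\chi$, Proposition~\ref{Proposition 2.4} reduces the problem to finding a field $\mathbb{F}$ with $\dim_\mathbb{F}H_2(\ker\chi,\mathbb{F})=\infty$; I would take $\mathbb{F}=\mathbb{F}_p$ and work with the chain complex for Artin kernels constructed in Section~3. This is where the hypothesis is consumed. In that complex the boundary of the $2$-cell $\sigma_e$ of an edge $e=\{u,v\}$ with $l(e)=2k$ equals $\big(1+x+\cdots+x^{k-1}\big)$ times the ``commutator'' (RAAG) boundary, where $x=t^{\chi(u)+\chi(v)}$ and $t$ is the generator of the cyclic quotient; for $l(e)=2$ this coefficient is $1$, while for $l(e)>2$, since $p\mid k$, it is divisible by $(x-1)$ over $\mathbb{F}_p$ (e.g.\ $1+x+\cdots+x^{p-1}=(x-1)^{p-1}$), and in particular it vanishes when $\chi(u)+\chi(v)=0$, i.e.\ on dead edges. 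Consequently, over $\mathbb{F}_p$, $H_*(\ker\chi,\mathbb{F}_p)$ becomes computable from the combinatorics of $\mathrm{Liv}^\chi$ (with dead edges genuinely deleted), its dead vertices, and its spherical triangles $A_{22P}$ --- dead edges give $2$-cycles that can only be killed by $3$-cells from spherical cliques --- which brings the Meier--Meinert--VanWyk / Bux--Gonzalez combinatorial picture into play.

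I would then split into cases according to which of (1)--(3) fails, producing in each an infinite family of linearly independent classes in $H_2(\ker\chi,\mathbb{F}_p)$. If (2) fails at a dead vertex $v$: when $\slk^\chi_{\mathrm{Liv}^\chi,v}$ is empty, $v$ is isolated in $\mathrm{Liv}^\chi$, so $\mathrm{Liv}^\chi$ is disconnected and $[\chi]\notin\Sigma^1(A_\Gamma)\supseteq\Sigma^2(A_\Gamma,\mathbb{Z})$ by Proposition~\ref{Proposition 1.5}; when it is disconnected but nonempty, a Mayer--Vietoris argument along the splitting of the star of $v$ gives the infinite family. If (1) fails at an edge $e=\{v,w\}$ with $\chi(v)=\chi(w)=0$ or $e$ dead and no living $u$ with $A_{\{v,w,u\}}$ spherical, then by the vanishing above $\sigma_e$ and its $t$-translates are $2$-cycles lying in the boundary of no $3$-cell $\sigma_{\{v,w,u\}}$ with $u$ living, hence independent in $H_2$. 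If (3) fails, the triangle-filled $\mathrm{Liv}^\chi$ is either disconnected (handled as above) or has $H_1\neq0$ over $\mathbb{F}_p$; a discrete Morse count on the level sets of $\widetilde\chi$ in $\widetilde{\mathrm{Sal}(\Gamma)}$ then shows $\dim_{\mathbb{F}_p}H_2(\ker\chi,\mathbb{F}_p)=\infty$.

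The main obstacle is making the degeneration of the second step precise and combining it with case (3): one must show that, over $\mathbb{F}_p$, the even $2$-cells of label $>2$ affect $H_2(\ker\chi,\mathbb{F}_p)$ only through their $(x-1)$-scaled boundaries --- the sole use of $p\mid l(e)/2$, requiring a careful analysis of the Section~3 chain complex --- and then upgrade a nonzero $H_1$ of the triangle-filled living complex into infinitely many independent $H_2$-classes of $\ker\chi$ via the Brown criterion / discrete Morse theory on the ascending and descending links of $\widetilde{\mathrm{Sal}(\Gamma)}$, including reconciling the coefficient field needed in condition (3) with the $\mathbb{F}_p$-degeneration. That $A_\Gamma$ is even --- so the only spherical triangles are of type $A_{22P}$ and no $\mathbb{B}_3$-exceptions arise --- is precisely what keeps this bookkeeping tractable.
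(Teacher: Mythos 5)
This statement is not proved in the paper you are reading: it is attributed to \cite{Escartin} (Theorem~7.4) and imported as a black box, so there is no ``paper's own proof'' to compare against. I can only assess your sketch on its own terms and against the partial arguments the paper does give for the \emph{necessity} direction in Section~5.

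Your overall strategy is sound and does line up with the toolkit this paper uses. The reduction to discrete characters via openness of $\Sigma^2$ and the observation that conditions (1)--(3) only see the linear conditions $\chi(v)=0$, $\chi(u)+\chi(v)=0$ is correct (and there are indeed no $3$-dead edges in the even case, since $\mathbb{B}_3$ contains a $3$-label). The mod-$p$ degeneration $1+x+\cdots+x^{k-1}=(x-1)^{p-1}(\cdots)$ when $p\mid k$ is exactly the mechanism that makes dead edges contribute vanishing boundary coefficients, and the paper's own necessity arguments in Section~5 (for conditions~(1) and~(2) in Propositions~5.2 and~5.4) proceed via precisely this kind of explicit cycle-building in $\widetilde C_*(\mathrm{Sal}_\Gamma^\chi)$ over a field of characteristic $p$. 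So your handling of conditions~(1) and~(2) matches in spirit what is actually carried out in the paper for those conditions.

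The genuine gap is exactly where you flag it: condition~(3). Saying that ``a discrete Morse count on the level sets of $\widetilde\chi$ then shows $\dim_{\mathbb{F}_p}H_2(\ker\chi,\mathbb{F}_p)=\infty$'' names a method but does not carry out the argument, and this is the hardest and most substantive case of the theorem --- it is precisely the case that is not addressed anywhere in the present paper and is what \cite{Escartin} must supply. Two concrete points would need to be resolved before your sketch becomes a proof. First, ``the triangle-filled $\mathrm{Liv}^\chi$ has $H_1\neq0$ over $\mathbb{F}_p$'' is not literally what condition~(3) says: condition~(3) asks for $1$-acyclicity over $\mathbb{Z}$, and a complex can have nontrivial integral $H_1$ that is torsion prime to $p$, in which case it is $\mathbb{F}_p$-acyclic and your mod-$p$ degeneration produces no cycles. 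You would either have to argue that for an even Artin group this torsion cannot be coprime to $p$ (plausible given the hypothesis that $p\mid l(e)/2$ for every $l(e)>2$, but it needs to be proved), or change fields depending on the torsion, which breaks the single-prime degeneration you set up. Second, the Morse/Brown criterion argument on ascending and descending links of $\widetilde{\mathrm{Sal}(\Gamma)}$ has to be made compatible with the chain-level $\mathbb{F}_p$ normalization of Section~3: these are two different computational frameworks (one topological on sublevel sets, one algebraic on the localized chain complex), and your sketch does not explain how a nonzero $\mathbb{F}_p$-class of the triangle-filled living complex is promoted to infinitely many independent classes of $H_2(\ker\chi,\mathbb{F}_p)$. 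Until those two steps are filled in, the proposal is a plausible outline but not a proof.
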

In Theorem~\ref{Theorem 1.4}, we extend the $\Sigma^2$-conjecture for Artin groups based on trees to the setting of $2$-dimensional Artin groups. However, it remains an open question whether the homotopical version of the $\Sigma^2$-conjecture holds in the family of groups described in the above Theorem.
\section{Homology of Artin kernels}\label{Section 3}
In this section, we introduce two chain complexes that will be essential in computing the homology of Artin groups in the next section. These complexes will serve as our main tool for studying the invariants $\Sigma^1$ and $\Sigma^2$.

Let $A_\Gamma$ be an Artin group with vertex set $V(\Gamma)=\lbrace v_0,\dots,v_n\rbrace$ and fix a total order $v_0<\cdots<v_k$ on the vertices. For each $v\in V(\Gamma)$ and each clique $X\subset\Gamma$ with $v\in V(X)$. We define $X_v=X\setminus\lbrace v\rbrace$ and the incidence number $\langle X_v \mid X \rangle \in \lbrace-1, 1\rbrace$, determined by the orientation of $X$ induced from the global ordering of $V(\Gamma)$. More precisely, suppose $V(X) = \lbrace {v_{i_0}, \dots, v_{i_k}}\rbrace$ with $v_{i_0} < \cdots < v_{i_k}$. Then, for $v_{i_\ell} \in V(X)$, we define  $\langle X_{v_{i_l}}\mid X\rangle=(-1)^l$.

\begin{proposition}(\cite{Bourbaki} Exercise 3, Chapter 4)
	Let $W_\Gamma$ be a Coxeter group and let $\Gamma'\subset\Gamma$. Then, for every coset in $W_\Gamma/W_{\Gamma'}$, there exists a unique representative $w\in W_\Gamma$ of minimal word length  in its class $wW_{\Gamma'}$.
\end{proposition}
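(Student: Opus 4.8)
The statement is the classical theorem on minimal‑length representatives for cosets of a standard parabolic subgroup, and the plan is to run the usual Coxeter‑theoretic argument. Write $S=V(\Gamma)$ for the Coxeter generators of $W_\Gamma$, write $S'\subseteq S$ for the generators of the parabolic $W_{\Gamma'}$ (recall that the subgroup generated by $V(\Gamma')$ is the standard parabolic attached to the induced subgraph, so there is no loss of generality), and let $\ell=\ell_{W_\Gamma}$ be word length with respect to $S$. Existence is immediate: $\{\ell(g)\mid g\in wW_{\Gamma'}\}$ is a non‑empty subset of $\mathbb{Z}_{\geq 0}$, hence has a least element, realised by some $w_0\in wW_{\Gamma'}$. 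The whole content is uniqueness, and for that I would first record the standard characterisation: an element $w$ has minimal length in $wW_{\Gamma'}$ if and only if $\ell(ws)>\ell(w)$ for every $s\in S'$. Necessity is trivial since $\ell(ws)$ and $\ell(w)$ always differ by exactly $1$; sufficiency is the substantive direction and follows from the key lemma below.

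The key lemma is additivity of length along the coset: \emph{if} $\ell(ws)>\ell(w)$ \emph{for all} $s\in S'$, \emph{then} $\ell(wu)=\ell(w)+\ell(u)$ \emph{for every} $u\in W_{\Gamma'}$, where $\ell(u)$ also equals the length of $u$ inside the Coxeter group $W_{\Gamma'}$ by the standard parabolic subgroup theorem. I would prove this by induction on $\ell(u)$, the case $u=e$ being trivial. For $\ell(u)\geq 1$ pick $s\in S'$ with $\ell(us)<\ell(u)$, set $u'=us$, so $\ell(u')=\ell(u)-1$ and by induction $\ell(wu')=\ell(w)+\ell(u)-1$. Since $wu=wu's$, the length $\ell(wu)$ is $\ell(wu')\pm 1$; if it equals $\ell(wu')+1$ we are done, so assume $\ell(wu)=\ell(wu')-1$. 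Concatenating reduced words for $w$ and for $u'$ yields a reduced word for $wu'$, and the Exchange Condition applied to $(wu')s$ lets us delete a single letter of that word to obtain a word for $wu$. Deleting a letter that came from $u'$ forces $\ell(u)\leq\ell(u)-2$, a contradiction; deleting a letter that came from $w$ produces a reflection $t\in W_{\Gamma'}$ with $\ell(wt)<\ell(w)$. To exclude this last possibility I would pass to the geometric representation and use $\ell(w)=|\Phi^+\cap w^{-1}\Phi^-|$: the positive root $\beta$ of $t$ lies in $\mathrm{span}(\Delta')$ with non‑negative coordinates, so if $w\alpha\in\Phi^+$ for every $\alpha\in\Delta'$ then $w\beta$ is a non‑negative combination of positive roots, hence $w\beta\in\Phi^+$, contradicting $\ell(wt)<\ell(w)$; therefore $w\alpha\in\Phi^-$ for some $\alpha\in\Delta'$, i.e.\ $\ell(ws')<\ell(w)$ for some $s'\in S'$, against the hypothesis. (Equivalently one may invoke the Strong Exchange Condition directly.)

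Granting the key lemma, uniqueness is one line: if $w$ and $w'$ both have minimal length in the coset, write $w'=wu$ with $u\in W_{\Gamma'}$; the lemma gives $\ell(w')=\ell(w)+\ell(u)$, and minimality of $w'$ forces $\ell(u)\leq 0$, so $u=e$ and $w=w'$. I expect the main obstacle to be exactly the final case of the key lemma, where a length decrease by an arbitrary reflection of $W_{\Gamma'}$ must be converted into a length decrease by a simple reflection; this is the one point where the Coxeter structure (Exchange/Strong Exchange Condition, or the root system) is genuinely used, rather than merely the group‑theoretic coset decomposition. An alternative that sidesteps this case is to argue throughout with inversion sets: one shows that for $w$ satisfying the hypothesis and $u\in W_{\Gamma'}$ the inversion set $N(wu)$ is the disjoint union of $N(u)$ and $u^{-1}N(w)$, which delivers both the length additivity and the uniqueness simultaneously.
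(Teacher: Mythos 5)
The paper does not prove this proposition; it simply cites Bourbaki (Exercise~3, Chapter~4) and uses it as a black box, so there is no in-paper argument to compare against. Your proof is the standard Coxeter-theoretic one and is correct: existence by well-ordering of lengths, the characterisation of minimal-length coset representatives via $\ell(ws)>\ell(w)$ for all $s\in S'$, and the additivity lemma $\ell(wu)=\ell(w)+\ell(u)$ proved by induction on $\ell(u)$ together with the Exchange Condition, with the final case (a length-decreasing reflection $t\in W_{\Gamma'}$) handled by passing to the root system and noting that $w$ sends all simple roots of $\Delta'$ to positive roots, hence sends the positive root of $t$ to a positive root. Two small remarks for polish: the step ``$w\beta$ is a non-negative combination of positive roots, hence $w\beta\in\Phi^+$'' should make explicit that $w\beta$ is already known to be a root (so being a non-negative combination of simple roots forces positivity) — non-negative combinations of positive roots are not roots in general; and your parenthetical alternative via inversion sets $N(wu)=N(u)\sqcup u^{-1}N(w)$ is indeed the cleanest route and would shorten the argument, since it gives both length additivity and uniqueness at once without case analysis.
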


Let $W_\Gamma$ be a Coxeter group and $\Gamma'\subset\Gamma$ a subgraph.  The elements in the coset space $W_\Gamma/W_{\Gamma'}$ that have minimal word length in their respective classes are called \textbf{$\Gamma'$-reduced} and the set of all such elements is denoted as $W_\Gamma^{\Gamma'}$. 

Assume that $W_\Gamma$ is generated by $\lbrace w_1,\dots,w_n\rbrace$ and let $A_\Gamma$ be the associated Artin group, generated by $\lbrace v_1,\dots,v_n\rbrace$. There is a natural projection homomorphism $\pi:A_\Gamma\twoheadrightarrow W_\Gamma$ defined by sending $v_i\mapsto w_i$ for all $i$. This projection admits a set-theoretic section $q: W_\Gamma\to A_\Gamma$ constructed as follows: for any $w\in W_\Gamma$, choose a word $w_{i_1}\cdots w_{i_r}$ of minimal length representing $w$. Define $q(w)=v_{i_1}\cdots v_{i_r}\in A_\Gamma$ and denote this element as $a_w$. It follows from Tits' solution to the word problem for Coxeter groups (see \cite{Charney-Davis}) that the element $a_w$ is well-defined, i.e. it does not depend on the choice of reduced expression of $w$. We define: $$T_\Gamma^{\Gamma'}=\sum_{w\in W_{\Gamma}^{\Gamma'}}(-1)^{l(w)}a_w\in\mathbb{Z}[A_\Gamma]$$
where $l(w)$ denotes the word length of $w\in W_\Gamma$.
\begin{theorem}(\cite{Davis-Leary} Section 7)\label{Davis-Leary}
	Let $A_\Gamma$ be an Artin group satisfying the $K(\pi,1)$-conjecture and let $\widetilde{\mathrm{Sal}(\Gamma)}$ denote the universal cover of the Salvetti complex of $A_\Gamma$. For every spherical subgraph $X\subset\Gamma$, let $\widetilde{\sigma}_{X}$ denote the lift of the cell $\sigma_X$ corresponding to the identity element in $A_\Gamma$. Then, the boundary maps in the chain complex $C_*(\widetilde{\mathrm{Sal}(\Gamma)})$ are given by:
	$$\partial(\widetilde{\sigma}_{X})=\sum_{v\in V(X)}\langle X_v\mid X\rangle T_X^{X_v}\widetilde{\sigma}_{X_v}.$$
\end{theorem}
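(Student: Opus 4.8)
\textbf{Proof proposal for Theorem \ref{Davis-Leary}.}

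The plan is to start from the known CW-structure of the Salvetti complex $\mathrm{Sal}(\Gamma)$ and transfer it to the universal cover $\widetilde{\mathrm{Sal}(\Gamma)}$, which is a free $A_\Gamma$-CW-complex with one free orbit of cells $\widetilde{\sigma}_X$ (the chosen lift of $\sigma_X$) for each spherical subgraph $X\subset\Gamma$. Thus $C_*(\widetilde{\mathrm{Sal}(\Gamma)})$ is the free $\mathbb{Z}[A_\Gamma]$-module on the symbols $\widetilde{\sigma}_X$, and the boundary map is $A_\Gamma$-equivariant, so it suffices to compute $\partial(\widetilde{\sigma}_X)$ and express it as an $\mathbb{Z}[A_\Gamma]$-linear combination of the $\widetilde{\sigma}_{X_v}$. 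First I would recall the combinatorial description of the attaching map of the cell $\sigma_X$ in $\mathrm{Sal}(\Gamma)$: the Salvetti complex for a \emph{spherical} graph $X$ is (a model for) $K(A_X,1)$, and its cells are indexed by spherical subgraphs (equivalently, by cosets/elements of the finite Coxeter group $W_X$), following Salvetti's original construction and the analysis in \cite{Salvetti}, \cite{Salvetti 2}, \cite{Charney-Davis}. The cell $\sigma_X$ of dimension $|V(X)|$ has its boundary built from the codimension-one faces $\sigma_{X_v}$, each appearing once for every $W_{X_v}$-coset in $W_X$ — this is exactly where the finite Coxeter combinatorics enters.

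The key step is to identify, for each $v\in V(X)$, the coefficient of $\widetilde{\sigma}_{X_v}$ in $\partial(\widetilde{\sigma}_X)$ with the element $\langle X_v\mid X\rangle T_X^{X_v}\in\mathbb{Z}[A_\Gamma]$. The sign $\langle X_v\mid X\rangle$ is the orientation/incidence number coming from the fixed total order on $V(\Gamma)$ and accounts for the standard simplicial-type sign in the boundary of a product-of-intervals cell. The more substantial point is the appearance of $T_X^{X_v}=\sum_{w\in W_X^{X_v}}(-1)^{l(w)}a_w$: each $X_v$-reduced element $w\in W_X^{X_v}$ labels one of the lifts of $\sigma_{X_v}$ lying in the boundary of $\widetilde{\sigma}_X$, namely the translate $a_w\cdot\widetilde{\sigma}_{X_v}$, and the coefficient $(-1)^{l(w)}$ records the local orientation of that face relative to $\widetilde{\sigma}_X$. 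I would verify this first in the rank-one and rank-two cases — where $\mathrm{Sal}(\Gamma)$ is respectively a circle and the standard $2$-complex of a dihedral Artin relation, and the formula reduces to the familiar fact that $\partial$ of the relator $2$-cell is $(1-u+uv-\cdots)(v-1)$-type expressions — and then appeal to the product structure of the Salvetti complex over a spherical graph together with the compatibility of the sections $q:W_X\to A_X$ with Tits' solution to the word problem (so that $a_w$ is well-defined on reduced words). The well-definedness of $a_w$ is precisely what makes the formula unambiguous, and it is already recorded in the excerpt.

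Concretely, the steps in order are: (1) set up $\widetilde{\mathrm{Sal}(\Gamma)}$ as a free $A_\Gamma$-CW-complex and reduce to computing $\partial(\widetilde{\sigma}_X)$ equivariantly; (2) recall/derive the cellular chain complex of $\mathrm{Sal}(\Gamma)$ itself, indexing cells by spherical subgraphs and, for the boundary, by $W_{X_v}$-cosets in $W_X$; (3) pin down the orientation conventions so that the global sign is $\langle X_v\mid X\rangle$ and the internal signs along the face-lifts are $(-1)^{l(w)}$; (4) match the lift $a_w\cdot\widetilde{\sigma}_{X_v}$ to the coset-representative $w\in W_X^{X_v}$ using the section $q$, invoking Tits' theorem for well-definedness; (5) assemble these into $\partial(\widetilde{\sigma}_X)=\sum_{v\in V(X)}\langle X_v\mid X\rangle\,T_X^{X_v}\,\widetilde{\sigma}_{X_v}$, and note $K(\pi,1)$ guarantees $\widetilde{\mathrm{Sal}(\Gamma)}$ is contractible so this is a genuine free resolution of $\mathbb{Z}$ over $\mathbb{Z}[A_\Gamma]$, though for the boundary formula alone contractibility is not needed. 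The main obstacle I anticipate is step (3)–(4): carefully tracking orientations of the product cells and checking that the sign attached to the face indexed by $w$ is exactly $(-1)^{l(w)}$ (not, say, $(-1)^{l(w)}$ times some coset-dependent correction) — this is a bookkeeping argument that is routine in each fixed small case but needs a clean inductive or generating-function formulation to handle all spherical $X$ uniformly; the cleanest route is probably to cite the relevant computation from \cite{Davis-Leary} Section 7 and \cite{Salvetti 2}, reproducing only the sign analysis.
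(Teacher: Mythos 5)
The paper does not prove this theorem: it is stated as a quoted result from Davis--Leary \cite{Davis-Leary}, Section 7, and the paper uses it as a black box. There is therefore no in-paper argument to compare against. Your sketch reconstructs the expected line of proof and is correct in outline, but it explicitly defers the decisive step --- your items (3)--(4), the matching of the face indexed by $w\in W_X^{X_v}$ to the translate $a_w\cdot\widetilde{\sigma}_{X_v}$ with sign $(-1)^{l(w)}$ --- to the same reference, so the proposal is at the same logical level as the paper's citation rather than an independent proof.

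Two small imprecisions worth flagging. First, you describe $\sigma_X$ as a ``product-of-intervals cell''; that picture is specific to the RAAG case, where $\mathrm{Sal}(X)$ for a clique $X$ is a torus. For general spherical $X$ the top cell is a higher polytope (essentially the dual of the $W_X$-permutohedron), and what matters is only that its codimension-one faces of type $X_v$ are indexed by $W_{X_v}$-cosets in $W_X$, equivalently by $X_v$-reduced elements --- which is exactly what produces the sum $T_X^{X_v}$. Second, the appeal to the Salvetti complex of spherical $X$ being a $K(A_X,1)$ invokes Deligne's theorem; this is true but, as you yourself note at the end, irrelevant to the boundary formula, which is purely a statement about the cellular chain complex of the CW-structure and holds without any asphericity or $K(\pi,1)$ hypothesis. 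Otherwise the reduction to an equivariant computation and the role of the section $q$ and Tits' theorem in making $a_w$ well-defined are stated correctly.
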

Let $R$ be a commutative ring and let $\chi:A_\Gamma\to\mathbb{Z}$ be a non-trivial discrete character. By Remark 2.3 in \cite{Blasco-Cogolludo-Martinez 2}, we may assume without loss of generality that $\chi$ is surjective. Consequently, $\chi$ determines an infinite cyclic cover of $\widetilde{\mathrm{Sal}(\Gamma)}$. Given a commutative ring $R$, the $R$-chain complex of this cyclic cover can be computed as: 
$$R[t^{\pm1}]\otimes_{R[A_\Gamma]}C_*(\widetilde{\text{Sal}(\Gamma)})=R\otimes_{\ker(\chi)}C_*(\widetilde{\text{Sal}(\Gamma)})$$
where $R[t^{\pm1}]$ is regarded as a a left $R[A_\Gamma]$-module trough the action $1\ast v=t^{\chi(v)}$ for all $v\in A_\Gamma$. We denote this chain complex as $C_*(\widetilde{\text{Sal}(\Gamma)})_\chi$. The homology groups $H_\ast(\Ker(\chi),R)$ correspond to the homology of this chain complex. For a spherical subgraph $X\subset\Gamma$, we use the notation $\sigma_X^\chi$ to denote the projection of $\widetilde{\sigma}_X$ in the $\chi$-cyclic cover.

Theorem \ref{Davis-Leary}  yields the following expression of the differentials of the cyclic cover
\begin{equation}\label{differential}\partial(\sigma^\chi_{X})=\sum_{v\in V(X)}\langle X_v\mid X\rangle\chi_t( T_X^{X_v})\sigma^\chi_{X_v}\end{equation}
where:
$$\chi_t( T_X^{X_v})=\sum_{w\in W_X^{X_v}}(-1)^{l(w)}t^{\chi(a_w)}.$$

To compute the differentials in (\ref{differential}) for any spherical $X\subset\Gamma$, it is necessary to understand the $X_v$-reduced elements for every $v\in V(X)$. These elements were computed explicitly by Stumbo for all spherical irreducible Artin groups in \cite{Stumbo}. Using his formulas, one can compute the homology with field coefficients of any Artin kernel. An example of those computations can be found at the end of the section.

Let $G$ be a finite abelian group and take group epimorphism $\varphi:A_\Gamma\to G$. Define a map $\psi:A_\Gamma\to \mathbb{Z}\times G$  given by $\psi(g)=(\chi(g),\varphi(g))$. In general $\psi$ need not be surjective. However, the following criterion ensures that $\psi$ is surjective.
\begin{lemma}
	 If the restriction $\restr{\varphi}{\Ker(\chi)}$ is an epimorphism then $\psi$ is an epimorphism.
\end{lemma}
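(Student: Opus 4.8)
The plan is to show that $\psi = (\chi, \varphi)\colon A_\Gamma \to \mathbb{Z} \times G$ is surjective by producing preimages of a generating set of $\mathbb{Z} \times G$, using the hypothesis that $\restr{\varphi}{\Ker(\chi)}$ is onto. First I would fix $g_0 \in A_\Gamma$ with $\chi(g_0) = 1$; this exists because we have arranged $\chi$ to be surjective. Then $\psi(g_0) = (1, \varphi(g_0))$, so the subgroup $\mathrm{Im}(\psi)$ contains an element whose first coordinate is $1$. The key observation is that $\mathrm{Im}(\psi)$ also contains $\{0\} \times G$: indeed, for any $h \in G$ the hypothesis gives some $k \in \Ker(\chi)$ with $\varphi(k) = h$, and then $\psi(k) = (\chi(k), \varphi(k)) = (0, h)$.

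Next I would combine these two facts. Given an arbitrary target $(m, h) \in \mathbb{Z} \times G$, set $x = g_0^m \cdot k$ where $k \in \Ker(\chi)$ is chosen so that $\varphi(k) = h - m\varphi(g_0)$ (possible by the hypothesis). Then $\chi(x) = m\chi(g_0) + \chi(k) = m$ and $\varphi(x) = m\varphi(g_0) + \varphi(k) = h$, since $G$ is abelian so $\varphi$ composed with multiplication behaves additively. Hence $\psi(x) = (m, h)$, proving surjectivity. Alternatively, and perhaps more cleanly for the writeup, one argues at the level of subgroups: $\mathrm{Im}(\psi)$ is a subgroup of $\mathbb{Z} \times G$ containing both $(1, \varphi(g_0))$ and $\{0\}\times G$, and any such subgroup is all of $\mathbb{Z}\times G$ because projecting to the first coordinate is surjective (it hits $1$) with kernel exactly $\{0\}\times G \subseteq \mathrm{Im}(\psi)$.

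There is no real obstacle here; the only point requiring a small amount of care is the reduction allowing us to assume $\chi$ surjective (so that an element of $\chi$-value $1$ exists), which is exactly the normalization recalled just before the statement via Remark 2.3 of \cite{Blasco-Cogolludo-Martinez 2}. If one did not wish to invoke that, the same argument works verbatim with $1$ replaced by a generator of $\chi(A_\Gamma) \leq \mathbb{Z}$, and then one notes that $\psi$ surjects onto $\chi(A_\Gamma) \times G$, which suffices for all later applications; but under the standing assumption that $\chi$ is surjective the statement is literally as asserted.
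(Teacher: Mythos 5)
Your proof is correct and takes essentially the same approach as the paper: pick an element with the desired $\chi$-value (the paper directly chooses $h$ with $\chi(h)=a$, while you take a power of a fixed $g_0$ with $\chi(g_0)=1$), then correct its $\varphi$-value by multiplying by a suitable element of $\Ker(\chi)$. The alternative subgroup-theoretic phrasing at the end is a clean repackaging of the same idea.
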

\begin{proof}
	Let $(a,b)\in \mathbb{Z}\times G$. Since $\chi$ is surjective there exists $h\in A_\Gamma$ such that $\chi(h)=a$. Then, $\psi(h)=(a,\varphi(h))$. By assumption $\restr{\varphi}{\Ker(\chi)}$ is an epimorphism, so there exists $c\in\Ker(\chi)$ such that $\varphi(c)=\varphi(h)^{-1}b$. Therefore:
	$$\psi(hc)=\psi(h)\psi(c)=(a,\psi(h))(1,\psi(h)^{-1}b)=(a,b)$$
	which shows that $\psi$ is surjective.
\end{proof}
Assume that $\psi$ is surjective. Then, $\psi$ determines an infinite cyclic cover of $\widetilde{\mathrm{Sal}(\Gamma)}$. Given a commutative ring $R$, the $R$-chain complex of this cyclic cover can be computed as:  
$$R[\mathbb{Z}\times G]\otimes_{R[A_\Gamma]}C_*(\widetilde{\text{Sal}(\Gamma)})=R\otimes_{\ker(\psi)}C_*(\widetilde{\text{Sal}(\Gamma)})$$
where $R[\mathbb{Z}\times G]$ is a $R[A_\Gamma]$-module via $1\ast v=t^{\chi(v)}\varphi(v)$. We denote this chain complex as $C_*(\widetilde{\text{Sal}(\Gamma)})_\psi$. For a spherical subgraph $X\subset\Gamma$, we use the notation $\sigma_X^\psi$ to denote the projection of $\widetilde{\sigma}_X$ in the $\psi$-cyclic cover. The homology groups $H_\ast(\Ker(\psi),R)$ correspond to the homology of this chain complex. The differentials can be computed analogously to the $\chi$-cyclic cover case, replacing each occurrence of  the substitution $t^{\chi(v)}$ by $t^{\chi(v)}\varphi(v)$.

Since $G$ is a finite group, the subgroups $\Ker(\chi)$ and $\Ker(\psi)$ are commensurable. In particular, $\Ker(\chi)$ is of type $\mathrm{FP}_n$ if and only if $\Ker(\psi)$ is of type $\mathrm{FP}_n$. This provides the following useful criterion for studying the $\Sigma$-invariants:
\begin{lemma}\label{Lemma 3.2}
	Let $G$ be a finite group, $A_\Gamma$ be an Artin group satisfying the $K(\pi,1)$-conjecture, $\psi:A_\Gamma\to \mathbb{Z}\times G$ be an epimorphism extending $\chi$ and let $\mathbb{F}$ be a field. If $\dim_\mathbb{F} H_n(\Ker(\psi),\mathbb{F})=\infty$ then  $[\chi]\notin\Sigma^n(A_\Gamma,\mathbb{Z})$.
\end{lemma}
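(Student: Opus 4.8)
The plan is to deduce this directly from Proposition~\ref{Proposition 2.4} by exploiting the commensurability of $\Ker(\chi)$ and $\Ker(\psi)$. First I would observe that since $\psi:A_\Gamma\to\mathbb{Z}\times G$ is an epimorphism extending $\chi$, we have $\Ker(\psi)\leq\Ker(\chi)$, and that $\Ker(\psi)$ has finite index in $\Ker(\chi)$: indeed the inclusion $\Ker(\psi)\hookrightarrow\Ker(\chi)$ fits into the short exact sequence $1\to\Ker(\psi)\to\Ker(\chi)\xrightarrow{\varphi}G'\to 1$ for some subgroup $G'\leq G$, so $[\Ker(\chi):\Ker(\psi)]=|G'|\leq|G|<\infty$. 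Hence $\Ker(\chi)$ and $\Ker(\psi)$ are commensurable.

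Next I would recall that finiteness properties of type $FP_n$ are invariants of commensurability: if $H$ has finite index in $K$, then $K$ is of type $FP_n$ if and only if $H$ is of type $FP_n$ (this is standard; it follows, for instance, from the fact that a finite-index subgroup of a group of type $FP_n$ is of type $FP_n$, and conversely from a transfer/induction argument since $\mathbb{Z}K$ is finitely generated as a $\mathbb{Z}H$-module). Therefore $\Ker(\chi)$ is of type $FP_n$ if and only if $\Ker(\psi)$ is of type $FP_n$. Now suppose for contradiction that $[\chi]\in\Sigma^n(A_\Gamma,\mathbb{Z})$. Since $A_\Gamma$ satisfies the $K(\pi,1)$-conjecture, the Salvetti complex is a finite $K(A_\Gamma,1)$, so $A_\Gamma$ is of type $F_n$; by Proposition~\ref{Proposition 2.8} this forces $\Ker(\chi)$ to be of type $F_n$, in particular of type $FP_n$. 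By the commensurability just established, $\Ker(\psi)$ is then of type $FP_n$ as well, which means $H_m(\Ker(\psi),\mathbb{F})$ is a finite-dimensional $\mathbb{F}$-vector space for every $m\leq n$ and any field $\mathbb{F}$ (a group of type $FP_n$ over $\mathbb{Z}$ is of type $FP_n$ over any commutative ring, so its homology up to degree $n$ is finitely generated over that ring). This contradicts the hypothesis $\dim_\mathbb{F}H_n(\Ker(\psi),\mathbb{F})=\infty$, and the lemma follows.

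The only genuinely delicate point is the commensurability-invariance of $FP_n$, and more precisely the direction I actually use: that a finite-index \emph{subgroup} $\Ker(\psi)$ of an $FP_n$ group $\Ker(\chi)$ is itself $FP_n$. This is the classical fact that finiteness properties pass to finite-index subgroups, which can be cited from Bieri's notes or proved by noting that a projective resolution of $\mathbb{Z}$ over $\mathbb{Z}[\Ker(\chi)]$ restricts to one over $\mathbb{Z}[\Ker(\psi)]$ with the same finiteness in each degree up to $n$, since $\mathbb{Z}[\Ker(\chi)]$ is a free (hence projective) $\mathbb{Z}[\Ker(\psi)]$-module of finite rank $[\Ker(\chi):\Ker(\psi)]$. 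Everything else is bookkeeping: verifying $\Ker(\psi)\leq\Ker(\chi)$ has finite index, translating between $F_n$ and $FP_n$ via the already-available machinery (Propositions~\ref{Proposition 2.4} and~\ref{Proposition 2.8}, Theorem~\ref{Theorem 2.3}, Lemma~\ref{Lemma 1.4}), and unwinding the definition of $FP_n$ into the statement about homology being finite-dimensional in degrees $\leq n$ over a field. I would write the proof as a short contrapositive argument: assume $\dim_\mathbb{F}H_n(\Ker(\psi),\mathbb{F})=\infty$, conclude $\Ker(\psi)$ is not $FP_n$, hence $\Ker(\chi)$ is not $FP_n$, hence $\Ker(\chi)$ is not of type $F_n$, and finally invoke Proposition~\ref{Proposition 2.8} (or directly Theorem~\ref{Theorem 2.3} together with Lemma~\ref{Lemma 1.4}) to conclude $[\chi]\notin\Sigma^n(A_\Gamma,\mathbb{Z})$.
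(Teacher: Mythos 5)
Your proof is correct and follows essentially the same route as the paper: the paper establishes in the paragraph immediately preceding the lemma that $\Ker(\chi)$ and $\Ker(\psi)$ are commensurable (hence share the property $FP_n$), and the proof itself is just the chain $\dim_\mathbb{F} H_n(\Ker(\psi),\mathbb{F})=\infty \Rightarrow \Ker(\psi)$ not $FP_n \Rightarrow \Ker(\chi)$ not $FP_n \Rightarrow [\chi]\notin\Sigma^n(A_\Gamma,\mathbb{Z})$. You fill in the finite-index verification and the passage through Theorem~\ref{Theorem 2.3}/Proposition~\ref{Proposition 2.8} more explicitly, but the core argument is identical.
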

\begin{proof}
	If $\dim_\mathbb{F} H_n(\Ker(\psi),\mathbb{F})=\infty$  then $\Ker(\psi)$ is not $\mathrm{FP}_n$ which implies that $\Ker(\chi)$ is not $\mathrm{FP}_n$. Hence, $[\chi]\notin\Sigma^n(A_\Gamma,\mathbb{Z})$ by Proposition \ref{Proposition 2.4}.
\end{proof}
\begin{remark}
	To apply the Lemma, the $K(\pi,1)$-conjecture is assumed to ensure that $A_\Gamma$ is of type $F_n$. Since every Artin group is finitely presented, both Lemma \ref{Lemma 3.2} and Proposition \ref{Proposition 2.4} hold for $n\leq 2$ without the $K(\pi,1)$-assumption.
	
	More precisely, the $2$-skeleton of the Salvetti complex is the presentation complex of $A_\Gamma$, giving a natural way of computing the first homology group. Hence, the criterion can be applied for $n=1$ without the $K(\pi,1)$-assumption. However, without the $K(\pi,1)$-assumption, it is not known whether the 3-skeleton of the Salvetti complex suffices to compute the second homology group. Therefore, the results without assuming the $K(\pi,1)$-conjecture will only be used for the $\Sigma^1$-case.
\end{remark}
We are interested in computing the first two homology groups $H_1$ and $H_2$, which requires understanding the differentials of cells associated to cliques $X\subset\Gamma$ with $\abs{ V(X)}\leq 3$.  The relevant differentials are the following:
\begin{proposition}\label{FormulasDiferential}
	Let $A_\Gamma$ be a triangular Artin group generated by $a,b$ and $c$ and fix a total order $a<b<c$ on the vertex set. Then, the differentials in the chain complex $C_*(\widetilde{\text{Sal}(\Gamma)})_\chi$ are given as follows: 
\begin{center}
	\begin{varwidth}{\textwidth}
		\begin{enumerate}[label=(\Roman*)]
			\item\label{Formula1}  $\partial(\sigma_v^\chi)=(1-t^{\chi(v)})\sigma_\emptyset^\chi~\qquad \qquad \qquad \qquad \qquad \qquad \qquad \qquad \qquad \qquad \qquad \qquad\qquad \,\,\,\,\,\,\,\forall~v\in V(\Gamma)$
			\item\label{Formula2}  $\partial(\sigma_{ab}^\chi)=[(1-t^{\chi(a)})\sigma_b^\chi-(1-t^{\chi(b)})\sigma_a^\chi]\frac{t^{k(\chi(a)+\chi(b))}-1}{t^{\chi(a)+\chi(b)}-1}\qquad \qquad \qquad\qquad\qquad\quad\,\,\,\,\text{ if }l(\{a,b\})=2k$
			\item\label{Formula3}$\partial(\sigma_{ab}^\chi)=[\sigma_b^\chi-\sigma_a^\chi]\frac{t^{k\chi(a)}+1}{t^{\chi(a)}+1}\qquad \qquad \qquad \qquad \qquad \qquad \qquad \qquad \qquad  \qquad\,\,\,\,\,\text{ if }l(\{a,b\})=k\text{ is odd}$
			\item \label{Formula4}$	\partial(\sigma_{abc}^\chi)=(1-t^{\chi(a)})\sigma_{bc}^\chi+[(1-t^{\chi(c)})\sigma_{ab}^\chi-(1-t^{\chi(b)})\sigma_{ac}^\chi]\frac{(t^{\chi(b)+\chi(c)})^k-1}{t^{\chi(b)+\chi(c)}-1}\qquad\quad\,\,\,\,\,\text{if $A_\Gamma=A_{\{2,2k,2\}}$}$
			%%\item \label{Formula 6} $\partial(\sigma_{abc}^\chi)=(1-t^{\chi(a)})\sigma_{bc}^\chi+[\sigma_{ab}^\psi-\sigma_{ac}^\psi]\frac{1+(t^{\chi(b)})^k}{1+t^{\chi(b)}}\qquad\qquad\qquad\qquad\qquad\,\text{if $A_\Gamma=A_{\{2,k,2\}}$ with $k$ odd}$
			\item\label{Formula5}	$\partial(\sigma_{abc}^\chi)=(1-t^{\chi(a)})(1+t^{\chi(a)+\chi(b)})(1-t^{\chi(a)+2\chi(b)})\sigma_{bc}^\chi-\qquad\qquad\qquad\qquad\quad\,\,\,\,\,\,\text{if $A_\Gamma=A_{\{4,3,2\}}$}\\			\text{}\qquad\qquad\,\,\,-(1+t^{\chi(a)+\chi(b)})(1-t^{\chi(a)+2\chi(b)})(t^{2\chi(a)}-t^{\chi(a)}+1)\sigma_{ac}^\chi+\\
			\text{}\qquad\qquad\,\,+(1-t^{\chi(a)+2\chi(b)})(t^{2\chi(a)}-t^{\chi(a)}+1)\sigma_{ab}^\chi$
		\end{enumerate}
	\end{varwidth}
\end{center}
Here, the notation $A_\Gamma=A_{P,Q,M}$ means that the defining labels of $\Gamma$ are $l(\lbrace a,b\rbrace)=P,l(\lbrace b,c\rbrace)=Q$ and $l(\lbrace c,a\rbrace)=M$. 
\end{proposition}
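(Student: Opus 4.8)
The plan is to deduce every formula directly from Theorem~\ref{Davis-Leary} and the substitution rule (\ref{differential}), reducing the whole statement to the computation, for each spherical clique $X$ with $\abs{V(X)}\le 3$ and each $v\in V(X)$, of the Laurent polynomial
$$\chi_t(T_X^{X_v})=\sum_{w\in W_X^{X_v}}(-1)^{l(w)}t^{\chi(a_w)}.$$
Once these are known, formulas \ref{Formula1}--\ref{Formula5} follow by inserting them into (\ref{differential}) and recording the incidence signs $\langle X_v\mid X\rangle=(-1)^{\ell}$ determined by the fixed order $a<b<c$. The only external ingredient is an explicit description of the minimal-length coset representatives $W_X^{X_v}$ and of the associated words $a_w$; for the spherical Artin groups occurring here this is supplied by Stumbo's computations in \cite{Stumbo}.

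The case $\abs{V(X)}=1$ is immediate: $W_{\{v\}}^{\emptyset}=\{1,w_v\}$ with $a_1=1$ and $a_{w_v}=v$, so $\chi_t(T_{\{v\}}^{\emptyset})=1-t^{\chi(v)}$, which is \ref{Formula1}. For an edge $X=\{a,b\}$ with label $m$, the group $W_X$ is the dihedral group $\mathbb{I}_2(m)$ of order $2m$, and the $X_a$-reduced (resp. $X_b$-reduced) elements are exactly the alternating words not ending in $w_b$ (resp. $w_a$), one of each length $j=0,1,\dots,m-1$; the word $a_w$ of length $j$ then contains $\lceil j/2\rceil$ and $\lfloor j/2\rfloor$ occurrences of the two generators. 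Splitting the alternating sum by the parity of $j$ collapses it to a single geometric series. When $m=2k$ this gives $\chi_t(T_X^{X_a})=(1-t^{\chi(a)})\,\frac{t^{k(\chi(a)+\chi(b))}-1}{t^{\chi(a)+\chi(b)}-1}$ and the symmetric expression for $X_b$, yielding \ref{Formula2}. When $m$ is odd the defining relation forces $\chi(a)=\chi(b)$, so $\chi(a_w)=l(w)\chi(a)$ and the sum becomes $\sum_{j=0}^{m-1}(-t^{\chi(a)})^j=\frac{t^{m\chi(a)}+1}{t^{\chi(a)}+1}$, giving \ref{Formula3}.

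For the $2$-cells I use that the only spherical triangular Artin groups are the $A_{22P}$ and the $A_{23P}$ with $P\in\{3,4,5\}$, and that formulas \ref{Formula4} and \ref{Formula5} concern the two families needed later, $A_{\{2,2k,2\}}$ and $A_{\{4,3,2\}}=\mathbb{B}_3$. In the first case $a$ is central, $A_\Gamma\cong\langle a\rangle\times A_{\{b,c\}}$, and correspondingly $W_X=\langle w_a\rangle\times\mathbb{I}_2(2k)$; a minimal coset representative never uses $w_a$, so for $X_a=\{b,c\}$ one gets $\{1,w_a\}$ and for $X_b,X_c$ one is reduced to the dihedral staircase computation already done for edges. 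Assembling these with the signs $\langle X_a\mid X\rangle=1$, $\langle X_b\mid X\rangle=-1$, $\langle X_c\mid X\rangle=1$ produces \ref{Formula4}. The case $\mathbb{B}_3$ is the genuinely laborious one: here $l(\{b,c\})=3$ forces $\chi(c)=\chi(b)$, and one must run through Stumbo's explicit lists of the $48/\abs{W_{X_v}}$ minimal coset representatives for $X_v\in\{\{b,c\},\{a,c\},\{a,b\}\}$, evaluate $\chi(a_w)$ on each, and recognise the three resulting alternating sums as the factored polynomials $(1-t^{\chi(a)})(1+t^{\chi(a)+\chi(b)})(1-t^{\chi(a)+2\chi(b)})$, $(1+t^{\chi(a)+\chi(b)})(1-t^{\chi(a)+2\chi(b)})(t^{2\chi(a)}-t^{\chi(a)}+1)$ and $(1-t^{\chi(a)+2\chi(b)})(t^{2\chi(a)}-t^{\chi(a)}+1)$ appearing in \ref{Formula5}.

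The main obstacle is exactly this last step: making the $\mathbb{B}_3$ alternating sums \emph{visibly} factor, rather than emerge as opaque Laurent polynomials with $6$, $8$ or $12$ terms. I expect to handle it by exploiting the recursive structure of minimal coset representatives in type $B$ — writing a $B_3$-representative as a representative for a rank-$2$ parabolic times a short descent word — so that each target polynomial is built up one factor at a time; the remaining spherical triangular cases ($A_{22P}$ with $P$ odd, $A_{233}$, $A_{235}$), which are not needed here, can be treated in the same way.
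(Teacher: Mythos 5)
Your proposal follows essentially the same approach as the paper's own proof: derive each formula from the Davis--Leary boundary formula and the substitution rule defining $\chi_t$, then compute the alternating sums over the minimal-length coset representatives supplied by Stumbo. The paper only works out case (II) explicitly and appeals to ``analogous'' computations for the rest; you give a more thorough treatment, in particular the structural observations that $A_{\{2,2k,2\}}$ is a direct product (so the rank-$3$ coset representatives reduce to the rank-$2$ dihedral staircases) and that the $\mathbb{B}_3$ representatives can be built recursively from parabolic ones, but the underlying method is identical.
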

\begin{proof}
	We illustrate the argument by proving (II); the remaining cases follow analogously using Stumbo's formulas. From equation \ref{differential}, we have:
	$$\partial(\sigma_{ab}^\chi)=\chi_t(T^b_{ab})\sigma_b^\chi-\chi_t(T^a_{ab})\sigma_a^\chi$$
	By \cite{Stumbo}, the set of $a$-reduced elements is $W^b_{ab}=\lbrace 1,a,ab,aba,\dots,(ab)^{k-1}a\rbrace$, which yields $T^b_{ab}=1-a+ab-\cdots-(ab)^{k-1}a$. Hence, applying $\chi_t$ we obtain:
	$$\chi_t(T^b_{ab})=1-t^{\chi(a)}+t^{\chi(a)+\chi(b)}-\cdots-t^{k\chi(a)+(k-1)\chi(b)}=(1-t^{\chi(a)})\frac{t^{k(\chi(a)+\chi(b))}-1}{t^{\chi(a)+\chi(b)}-1}$$
	A similar calculation gives the formula for $\chi_t(T^a_{ab})$ , completing the proof of (II)
\end{proof}
Recall that the differentials of $C_*(\widetilde{\text{Sal}(\Gamma)})_\psi$ are obtained from those of $C_*(\widetilde{\text{Sal}(\Gamma)})_\psi$  by applying the substitution $t^{\chi(v)}\mapsto t^{\chi(v)}\varphi(v)$ for all $v\in V(\Gamma)$. For example, if $l(\{a,b\})=2k$, the differential is given by:
$$\partial(\sigma_{ab}^\psi)=[(1-\varphi(a)t^{\chi(a)})\sigma_b^\psi-(1-\varphi(b)t^{\chi(b)})\sigma_a^\psi]\frac{\left(\varphi(ab)t^{\chi(a)+\chi(b)}\right)^k-1}{\varphi(ab)t^{\chi(a)+\chi(b)}-1}$$
\section{The $\Sigma^1$-conjecture}
\subsection{The $\Sigma^1$-conjecture and characters of finite groups}\label{Section 7}
Using Lemma \ref{Lemma 3.2} we now prove an equivalent formulation of the $\Sigma^1$-conjecture, reducing the problem to a question in character theory. 
This technical result will serve as the main result used in the proof of Theorem \ref{Theorem 1.1}.

We begin by simplifying the setting of the $\Sigma^1$-conjecture. Let $A_\Gamma$ be an Artin group and $\chi:A_\Gamma\to\mathbb{Z}$ be a non-trivial character. By Proposition \ref{Proposition 2.9} we may assume without loss of generality that $\mathrm{Liv}_0^\chi$ is connected, $\mathrm{Liv}^\chi$ is disconnected and that is enough to prove that $[\chi]\notin\Sigma^1(A_\Gamma)$.

Let $G$ be a finite abelian group and assume that there exists an epimorphism $\psi: A_\Gamma\to \mathbb{Z}\times G$ extending $\chi$. Using Proposition \ref{Proposition 1.6}.1 and passing to a special subgroup if necessary, we may further assume that $\chi(v)\neq0$ for all $v\in V(\Gamma)$, in particular $\Gamma=\mathrm{Liv}_0^\chi$.

Now, suppose that $\mathrm{Liv}^\chi$ has $n\geq 2$ connected components, denoted by $\mathrm{Liv}^\chi_1,\dots,\mathrm{Liv}^\chi_n$. Fix some $i\in\lbrace 1,\dots,n-1\rbrace$ and let $\Gamma_1$ and $\Gamma_2$ be the full subgraphs of $\Gamma$ generated by the vertex sets: 
$$V(\Gamma_1)=V(\mathrm{Liv}^\chi_1\cup\cdots\cup \mathrm{Liv}^\chi_{i}), V(\Gamma_2)=V(\mathrm{Liv}^\chi_{i+1}\cup\cdots\cup \mathrm{Liv}^\chi_n)$$
Then, $\Gamma=\Gamma_1\cup\Gamma_2$ and all the edges connecting $\Gamma_1$ with $\Gamma_2$ are dead. Using the transformations from Proposition \ref{Proposition 1.6}.2 and Proposition \ref{Proposition 1.6}.4 we construct a new graph $\Gamma'$ as follows:
\begin{itemize}
	\item Add an edge labelled $2$ between any pair of non-adjacent vertices in $\Gamma_1$ and in $\Gamma_2$.
	\item Replace every even label $\geq 4$ by a $2$ within each  of $\Gamma_1$ and $\Gamma_2$.
\end{itemize}
The resulting graph $\Gamma'$ is the union of two disjoint complete subgraphs $\Gamma_1'$ and $\Gamma_2'$ such that all labels in each subgraph are either $2$ or odd integers and  all edges between $\Gamma_1'$ and $\Gamma_2'$ are dead.

Let $\pi: A_\Gamma\to A_{\Gamma'}$ be the natural projection map and let $\chi'$ be the character induced by $\chi$. Then, by Proposition \ref{Proposition 1.6}, we have that if $[\chi']\notin\Sigma^1(A_{\Gamma'})$, then $[\chi]\notin\Sigma^1(A_{\Gamma})$. Therefore, it suffices to show that $[\chi']\notin\Sigma^1(A_{\Gamma'})$. To do that we will compute $H_1(\ker(\psi'),\mathbb{F})$ and use Lemma \ref{Lemma 3.2}, where $\psi'$ is the map induced by $\psi$ on $A_{\Gamma'}$. To simplify the notation in the remaining of the section we will write $A_\Gamma,\chi,\varphi,\psi$ instead of $A_{\Gamma'},\chi',\varphi',\psi'$.

To simplify the computation of homology, we define a normalization of the chain complex using a set of coefficients $p_X^\psi$. It requires to work over a ring in which those coefficients are invertible. Let $\mathbb{F}$ be a field and consider the group ring $\mathbb{F}[\mathbb{Z}\times G]$. We can interpret it as either the group ring $R[G]$ with $R=\mathbb{F}[t^{\pm1}]$ or as the polynomial ring $\mathbb{F}G[t^{\pm1}]$. Define the following multiplicative set:
$$S=\left\lbrace  zgt^{n}+\sum_{i<n}\alpha_it^i \left| n\in\mathbb{Z},z\in\mathbb{F}\setminus\lbrace 0\rbrace ,g\in G\text{ and }\alpha_i\in\mathbb{F}G\text{ with  only finitely many }\alpha_{i}\neq0\right.\right\rbrace$$
That is, $S$ is the subset of the polynomial ring $\mathbb{F}G[t^{\pm1}]$ consisting of all polynomials whose leading coefficient is a non-zero scalar multiple of a group element $g\in G$.
\begin{claim*}
	$S$ is a multiplicative set with no zero divisors.
\end{claim*}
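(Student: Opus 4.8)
The plan is to verify the two assertions separately: first that $S$ is closed under multiplication (and contains $1$), and second that $S$ contains no zero divisors of $\mathbb{F}G[t^{\pm1}]$. Since both claims concern the behaviour of \emph{leading terms} of Laurent polynomials in $t$ with coefficients in $\mathbb{F}G$, I would begin by fixing notation for the leading term: given a nonzero $f = \sum_{i} \alpha_i t^i \in \mathbb{F}G[t^{\pm1}]$, let $\deg(f)$ be the largest $i$ with $\alpha_i \neq 0$ and call $\alpha_{\deg(f)}$ the leading coefficient. Then membership in $S$ is precisely the condition that the leading coefficient lies in $\mathbb{F}^\times \cdot G \subset \mathbb{F}G$, i.e. is of the form $zg$ with $z \in \mathbb{F}\setminus\{0\}$ and $g \in G$.

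For the multiplicative closure, I would take $f, f' \in S$ with leading coefficients $zg$ and $z'g'$ respectively. The key observation is that in $\mathbb{F}G[t^{\pm1}]$ the coefficient of $t^{\deg(f)+\deg(f')}$ in the product $ff'$ is exactly $(zg)(z'g') = (zz')(gg')$, because every other contribution to that coefficient comes from pairs $(i,j)$ with $i + j = \deg(f) + \deg(f')$ and $(i,j) \neq (\deg f, \deg f')$, which forces $i > \deg f$ or $j > \deg f'$, hence a zero factor. Since $zz' \in \mathbb{F}\setminus\{0\}$ (as $\mathbb{F}$ is a field, hence a domain) and $gg' \in G$, the leading coefficient of $ff'$ is again of the required form, so $ff' \in S$; and $1 = 1\cdot e \cdot t^0 \in S$ where $e$ is the identity of $G$. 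This also shows that $\deg$ is additive on products of elements whose leading coefficients are non-zero-divisors in $\mathbb{F}G$, which is the ingredient needed for the second part.

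For the absence of zero divisors, suppose $f \in S$ and $h \in \mathbb{F}G[t^{\pm1}]$ with $fh = 0$; I must show $h = 0$. If $h \neq 0$, write its leading coefficient as $\beta \in \mathbb{F}G \setminus \{0\}$ with $\deg(h) = m$. The coefficient of $t^{\deg(f) + m}$ in $fh$ is, by the same cancellation argument as above, equal to $(zg)\beta = z\,(g\beta)$. Now $g\beta \neq 0$ in $\mathbb{F}G$: left multiplication by the unit $g \in G$ is a bijection of the $\mathbb{F}$-vector space $\mathbb{F}G$, so it sends the nonzero element $\beta$ to a nonzero element; and $z \neq 0$ in the domain $\mathbb{F}$. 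Hence the coefficient of $t^{\deg(f)+m}$ in $fh$ is nonzero, contradicting $fh = 0$. The same argument applied on the right shows $hf \ne 0$ for $h \ne 0$, so $S$ consists of (two-sided) non-zero-divisors.

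The only subtlety — and the step I would flag as the point requiring care rather than the routine leading-term bookkeeping — is that $\mathbb{F}G$ itself is \emph{not} a domain when $G$ is a nontrivial finite group, so one genuinely cannot argue by passing to degrees of a product of arbitrary elements: the argument must exploit that the leading coefficients of elements of $S$ lie in the \emph{group of units} $\mathbb{F}^\times \cdot G$ of $\mathbb{F}G$ (more precisely, are non-zero-divisors that remain non-zero-divisors after one-sided multiplication), which is exactly why the definition of $S$ is stated the way it is. Once this is isolated, the leading-coefficient computations go through verbatim and complete the proof.
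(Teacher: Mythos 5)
Your proof is correct and uses the same core mechanism as the paper's: the leading coefficient of a product is the product of leading coefficients, and since these lie in $\mathbb{F}^\times\cdot G$ (a group of units in $\mathbb{F}G$), the product's leading coefficient is again nonzero and of the required form. You go a bit further than the paper, though, and this is worth noting: the paper's argument only checks that $xy\in S$ and $xy\neq 0$ for $x,y\in S$, i.e.\ multiplicative closure and $0\notin S$, whereas you also verify the stronger assertion literally present in the claim — that no element of $S$ is a zero divisor in all of $\mathbb{F}G[t^{\pm1}]$ — by showing $fh\neq 0$ whenever $f\in S$ and $0\neq h\in\mathbb{F}G[t^{\pm1}]$. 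That step requires exactly the observation you isolate, that left multiplication by $g\in G$ is a bijection of $\mathbb{F}G$, so $g\beta\neq 0$ even though $\mathbb{F}G$ is not a domain. So your proof is not only correct but slightly more complete than the one in the paper, while following the same leading-term strategy.
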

\begin{proof}
	If $x,y\in S$ then:
	$$xy=\left(z_xg_xt^{n}+\sum_{i<n}\alpha_{x,i}t^i\right)\left(z_yg_yt^{m}+\sum_{i<m}\alpha_{y,i}t^i\right)=z_xz_yg_xg_yt^{n+m}+\sum_{i<n+m}\alpha_{xy,i}t^i$$
	Hence, the leading term of $xy$ is $z_xz_yg_xg_y\neq0$, so $0\neq xy\in S$. 
\end{proof}
Now, localizing the chain complex of the cyclic cover $R[G]\otimes_{\mathbb{F}[A_\Gamma]}C_*(\widetilde{\text{Sal}(\Gamma)})$ with respect to the multiplicative set $S$, we obtain a new complex $(R[G])_S\otimes_{\mathbb{F}[A_\Gamma]}C_*(\widetilde{\text{Sal}(\Gamma)})$. In the localized complex we normalize the generators by setting $\widetilde{\sigma}_X^\psi=\frac{1}{p_X^\psi}\sigma_X^\psi$ for each cell $\sigma_X^\psi$ with $\abs{V(X)}\leq 2$ and $\widetilde{\sigma}_X^\psi=\sigma_X^\psi$ otherwise. We define the normalizing elements $p_X^{\psi}$ as follows:
\begin{itemize}
	\item $p_\emptyset^\psi=1$.
	\item If $v\in V(\Gamma)$: $p_v^{\psi}=1-t^{\chi(v)}\varphi(v)$.
	\item If $l(\{a,b\})=2k$ is even: $p_{ab}^{\psi}=(1-t^{\chi(a)}\varphi(a))(1-t^{\chi(b)}\varphi(b))$.
	\item If $l(\{a,b\})=k$ is odd: $p_{ab}^{\psi}=(1-t^{\chi(a)}\varphi(a))\frac{(t^{\chi(a)}\varphi(a))^k+1}{t^{\chi(a)}\varphi(a)+1}$.
\end{itemize}
Each $p^{\psi}_X$ lies in the multiplicative set $S$ for every $X\subset\Gamma$ spherical with $\abs{V(X)}\leq 2$, so the normalized chains $\widetilde{\sigma}_X^\psi$ are well-defined in the localized complex, which we denote by $\widetilde{C}_\ast(\mathrm{Sal}_\Gamma^\psi)$. Using the Formulas \ref{Formula1},\ref{Formula2} and \ref{Formula3} from Proposition \ref{FormulasDiferential} we obtain:
\begin{itemize}
	\item If $X=\lbrace v\rbrace$ then $\partial(\widetilde{\sigma}_X^\psi)=\widetilde{\sigma}_\emptyset^\psi$.
	\item  	If $X=\mathbb{I}_2(k)=\lbrace a,b\rbrace$ with $k=2$ or odd, then $\partial(\widetilde{\sigma}_X^\psi)=\widetilde{\sigma}_{a}^\psi-\widetilde{\sigma}_{b}^\psi$.
	\item If $X=\mathbb{I}_2(2k)=\lbrace a,b\rbrace$ with $k>1$ then $\partial(\widetilde{\sigma}_X^\psi)=[\widetilde{\sigma}_{a}^\psi-\widetilde{\sigma}_{b}^\psi]\frac{1-\varphi(ab)^k}{1-\varphi(ab)}$.
\end{itemize}
\begin{lemma}\label{Lemma 4.1}
	The homology group $H_n(\Ker(\psi),\mathbb{F})$ is an $RG$-module whose free part has the same rank as the $(R[G])_S$-dimension of the $n$-th homology group of the normalized complex $\widetilde{C}_\ast(\mathrm{Sal}_\Gamma^\psi)$. Therefore, the homology group $H_n(\Ker(\psi),\mathbb{F})$ has finite $\mathbb{F}$-dimension if and only if the $n$-th homology group of the normalized chain complex $\widetilde{C}_\ast(\mathrm{Sal}_\Gamma^\psi)$ vanishes. 
\end{lemma}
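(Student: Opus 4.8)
The plan is to realize the normalized complex as a flat base change of the chain complex of the $\psi$-cyclic cover, and then to read off finiteness from the structure theory of finitely generated modules over the PID $R=\mathbb{F}[t^{\pm1}]$. Throughout write $H_n:=H_n(\Ker(\psi),\mathbb{F})$.

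\textbf{Step 1 (PID structure).} The complex $C_*(\widetilde{\mathrm{Sal}(\Gamma)})_\psi$ is a finite complex of finitely generated free $R[G]$-modules (one basis cell $\sigma_X^\psi$ for each spherical $X$, and $\Gamma$ is finite), and its homology is $H_*$ with its natural $R[G]=\mathbb{F}[\mathbb{Z}\times G]$-module structure coming from the deck action. Since $R[G]$ is free of rank $|G|$ over the Noetherian ring $R$, each $H_n$ is finitely generated as an $R$-module; as $R$ is a PID we may write $H_n\cong R^{r_n}\oplus T_n$ with $T_n$ the $R$-torsion submodule. This is the decomposition into free part (of rank $r_n$) and torsion. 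Because $R/(f)$ is finite-dimensional over $\mathbb{F}$ for every $0\neq f\in R$, we get $\dim_\mathbb{F}T_n<\infty$, hence $\dim_\mathbb{F}H_n<\infty$ if and only if $r_n=0$.

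\textbf{Step 2 (identification of $(R[G])_S$).} Every nonzero element of $R$ lies in $S$, since its top-degree coefficient in $t$ is a nonzero scalar $z=z\cdot e$; hence $(R[G])_S$ is a further localization of $(R\setminus\{0\})^{-1}R[G]=\mathbb{F}(t)[G]$, which is a finite-dimensional $\mathbb{F}(t)$-algebra and therefore Artinian. I claim every $s\in S$ is a non-zero-divisor of $R[G]$: writing $s=zg\,t^n+(\text{lower }t\text{-terms})$ and taking any $0\neq q\in R[G]$ with top $t$-term $\beta t^m$, $0\neq\beta\in\mathbb{F}G$, the coefficient of $t^{n+m}$ in $sq$ is $(zg)\beta$, which is nonzero because $zg$ is a unit of $\mathbb{F}G$; so $sq\neq 0$. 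Since $R\setminus\{0\}$ consists of non-zero-divisors of $R[G]$ and $R[G]\hookrightarrow\mathbb{F}(t)[G]$, the image of $s$ remains a non-zero-divisor of $\mathbb{F}(t)[G]$, and in an Artinian ring every non-zero-divisor is a unit. Thus every element of $S$ is already a unit in $\mathbb{F}(t)[G]$, so $(R[G])_S=\mathbb{F}(t)[G]$, and localizing at $S$ is the exact functor $-\otimes_R\mathbb{F}(t)$ on $R[G]$-modules.

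\textbf{Step 3 (conclusion).} The normalized complex $\widetilde C_*(\mathrm{Sal}_\Gamma^\psi)$ is obtained from $C_*(\widetilde{\mathrm{Sal}(\Gamma)})_\psi$ by extending scalars along $R[G]\to(R[G])_S$ and rescaling each cell $\sigma_X^\psi$ with $\abs{V(X)}\leq 2$ by the unit $p_X^\psi\in S$; this is an isomorphism of complexes of $(R[G])_S$-modules, so by flatness of localization $H_n(\widetilde C_*)\cong(R[G])_S\otimes_{R[G]}H_n\cong\mathbb{F}(t)\otimes_R(R^{r_n}\oplus T_n)\cong\mathbb{F}(t)^{r_n}$. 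Hence the dimension of $H_n(\widetilde C_*)$ over $(R[G])_S$ (equivalently, its $\mathbb{F}(t)$-dimension) equals $r_n$, matching the rank of the free part of $H_n$ from Step 1; and $H_n(\widetilde C_*)=0$ iff $r_n=0$ iff $\dim_\mathbb{F}H_n<\infty$. The step I expect to be the main obstacle is the identification $(R[G])_S=\mathbb{F}(t)[G]$: the point is that the defining property of $S$ — the leading coefficient being a scalar multiple of a group element, hence a \emph{unit} of $\mathbb{F}G$ — is exactly what forces elements of $S$ to be non-zero-divisors, which together with the elementary fact that non-zero-divisors of an Artinian ring are invertible makes the localization collapse to $\mathbb{F}(t)[G]$; without this one could only conclude one implication of the final equivalence.
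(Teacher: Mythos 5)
Your proof is correct and follows essentially the same route as the paper: localize at $S$, invoke flatness to commute localization with homology, and observe that only the ``free part'' survives. The paper's own argument is considerably more terse and somewhat informal: it speaks of the ``rank of the free part of $H_n$ over $R[G]$'' without noting that $R[G]$ is not a domain, so this decomposition is only canonical after passing to the $R$-module structure (which you do explicitly via the PID $R=\mathbb{F}[t^{\pm1}]$). Your Step~2, identifying $(R[G])_S$ with $\mathbb{F}(t)[G]$ by observing that $R\setminus\{0\}\subset S$ and that elements of $S$ become non-zero-divisors and hence units in the Artinian ring $\mathbb{F}(t)[G]$, is a genuinely useful clarification the paper leaves implicit; it is what makes the phrase ``$(R[G])_S$-dimension'' meaningful, since after the identification the homology of the normalized complex is simply $\mathbb{F}(t)\otimes_R H_n\cong\mathbb{F}(t)^{r_n}$. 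So the two proofs share the same flatness-plus-localization core, but yours pins down exactly which module category the rank computation is happening in, closing a gap in precision that the paper glosses over.
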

\begin{proof}
	Consider the chain complex $C_\ast = R[G] \otimes_{\mathbb{F}[A_\Gamma]} C_\ast(\widetilde{\mathrm{Sal}}(\Gamma))$. Localize this complex at the multiplicative set $S \subset R[G]$. Since localization is a flat operation, the homology of the localized complex satisfies $H_n((R[G])_S \otimes_{RG} C_\ast) \cong (R[G])_S \otimes_{RG} H_n(C_\ast)$.
	This implies that the rank over $(R[G])_S$ of the localized homology equals the rank of the free part of $H_n(C_\ast)$ over $R[G]$, i.e.:
	$$\operatorname{rank}_{(R[G])_S} H_n(\widetilde{C}_\ast(\mathrm{Sal}_\Gamma^\psi)) = \operatorname{rank}_{R[G]} H_n(\ker(\psi), \mathbb{F})_{\text{free}}$$
	Since the torsion part of an $RG$-module becomes trivial after localizing at $S$, it follows that the homology group $H_n(\ker(\psi), \mathbb{F})$ has finite $\mathbb{F}$-dimension if and only if $H_n(\widetilde{C}_\ast(\mathrm{Sal}_\Gamma^\psi)) = 0$.
\end{proof}

In view of Lemma \ref{Lemma 4.1}, it suffices to compute the homology group $H_1(\Ker(\psi),\mathbb{F})=\Ker(\partial_1)/\mathrm{Im}(\partial_2)$ of the normalized chain complex $\widetilde{C}_\ast(\mathrm{Sal}_\Gamma^\psi)$. 
Choose a directed spanning tree $T$ of $\Gamma$. Then, the kernel of $\partial_1$ is equal to: 
$$\ker(\partial_1)=\bigoplus\limits_{\lbrace v,w\rbrace\in E(\Gamma)}(RG)_S(\widetilde{\sigma}_v^\psi-\widetilde{\sigma}_w^\psi)$$
The image of $\mathrm{Im}(\partial_2)$ depends on whether an edge is living or dead: for each living edge $e=\lbrace v,w\rbrace$, the image includes $\widetilde{\sigma}_v^\psi-\widetilde{\sigma}_w^\psi$; while for each dead edge $e=\lbrace v,w\rbrace$ with label $l(e)=2k$, the image includes $\frac{1-\varphi(vw)^k}{1-\varphi(vw)}\left(\widetilde{\sigma}_v^\psi-\widetilde{\sigma}_w^\psi\right)$. Since each connected component $\Gamma_i$ of $\mathrm{Liv}^\chi$ consists only of living edges, we have $[\widetilde{\sigma}_v^\psi]=[\widetilde{\sigma}_w^\psi]$ in the homology for all $v,w\in V(\Gamma_i)$ and $i=1,2$. Therefore, the first homology group of the chain complex $\widetilde{C}_\ast(\text{Sal}_\Gamma^\chi)$ is isomorphic to the quotient of $(R[G])_S$ by the submodule generated by the coefficients of the torsion terms coming from the dead edges. In particular, if we define $I\subset (R[G])_S$ to be the ideal generated by the elements $\frac{1-\varphi(vw)^k}{1-\varphi(vw)}=1+\varphi(vw)+\dots+\varphi(vw)^{\frac{l(e)}{2}-1}$, where $e=\lbrace v,w\rbrace$ ranges over all dead edges and $l(e)=2k$, then:

$$H_1\left(\widetilde{C}_\ast(\text{Sal}_\Gamma^\chi),\mathbb{F}\right)\neq0\Leftrightarrow I\subsetneq (R[G])_S$$
Let us study the problem of determining when the ideal $I$ is equal to $(R[G])_S$ for the case where $\mathbb{F}=\mathbb{C}$.
\begin{lemma}
	Let $G$ be a finite abelian group and let $\mathcal{P}=\lbrace p_i\rbrace_{i=1}^k$ be a set of elements of $\mathbb{C}[G]$. The ideal $I$ generated by $\mathcal{P}$ is a proper subset of $\mathbb{C}[G]$ if and only if there exists an irreducible character $\mu:G\to \mathbb{C}^\times$ such that $\mu(p_i)=0~\forall~i=1,\dots,k$.
\end{lemma}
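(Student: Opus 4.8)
The plan is to use the representation theory of the finite abelian group $G$ to decompose the complex group ring $\mathbb{C}[G]$ as a direct sum of fields, one for each irreducible character. Since $G$ is finite abelian, every irreducible complex representation is one-dimensional, so it is given by a character (homomorphism) $\mu:G\to\mathbb{C}^\times$, and there are exactly $|G|$ of them. I would invoke the classical isomorphism of $\mathbb{C}$-algebras
$$\mathbb{C}[G]\;\xrightarrow{\;\sim\;}\;\prod_{\mu\in\widehat{G}}\mathbb{C},$$
sending an element $x\in\mathbb{C}[G]$ to the tuple $(\mu(x))_{\mu\in\widehat{G}}$, where $\mu$ is extended $\mathbb{C}$-linearly to $\mathbb{C}[G]$. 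This is an isomorphism because $\mathbb{C}[G]$ is semisimple (Maschke's theorem, as $\operatorname{char}\mathbb{C}=0$) and commutative, hence a product of copies of $\mathbb{C}$; the number of factors equals the number of irreducible characters, namely $|\widehat{G}|=|G|$, and the evaluation maps $\mu$ realize these projections.

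Next I would transport the ideal $I$ through this isomorphism. Under a product decomposition $\mathbb{C}[G]\cong\prod_{\mu}\mathbb{C}$, every ideal is of the form $\prod_{\mu\in T}\mathbb{C}$ for some subset $T\subseteq\widehat{G}$ of factors (equivalently, it is determined by the set of coordinates on which every element of the ideal vanishes). Concretely, the ideal $I$ generated by $\mathcal{P}=\{p_i\}_{i=1}^k$ corresponds, coordinate by coordinate, to: in the $\mu$-th factor, $I$ contains the whole factor $\mathbb{C}$ if $\mu(p_i)\neq 0$ for some $i$, and $I$ is zero in that factor if $\mu(p_i)=0$ for all $i$. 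This is because the image of $p_i$ is the tuple $(\mu(p_i))_\mu$, and the ideal generated by a set of tuples in a product of fields is exactly the product of the fields indexed by coordinates where at least one generator is nonzero.

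Combining these two observations gives the result directly: $I$ is a proper ideal of $\mathbb{C}[G]$ if and only if its image under the isomorphism is a proper ideal of $\prod_\mu\mathbb{C}$, which happens if and only if at least one coordinate factor is missing, i.e. there exists some $\mu\in\widehat{G}$ with $\mu(p_i)=0$ for all $i=1,\dots,k$. For the converse direction one simply notes that if such a $\mu$ exists, then every element of $I$ (being an $\mathbb{C}[G]$-combination of the $p_i$) has vanishing $\mu$-coordinate, so $1\notin I$ since $\mu(1)=1\neq 0$; conversely if no such $\mu$ exists, one can write $1$ as a combination using the orthogonal idempotents $e_\mu=\frac{1}{|G|}\sum_{g}\overline{\mu(g)}g$ of $\mathbb{C}[G]$, picking for each $\mu$ some $p_{i(\mu)}$ with $\mu(p_{i(\mu)})\neq 0$ and scaling appropriately, to conclude $I=\mathbb{C}[G]$.

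I do not anticipate a serious obstacle here: the only technical point to state carefully is the semisimplicity/Wedderburn decomposition of $\mathbb{C}[G]$ for $G$ finite abelian and the identification of its ideals, both of which are standard. If one wants to avoid citing Maschke, one can alternatively give the explicit idempotents $e_\mu=\frac{1}{|G|}\sum_{g\in G}\overline{\mu(g)}\,g$, verify $e_\mu e_{\mu'}=\delta_{\mu\mu'}e_\mu$ and $\sum_\mu e_\mu=1$ by the orthogonality relations for characters of abelian groups, and run the argument entirely by hand; this makes the "write $1$ as a combination" step in the converse fully explicit. The mild subtlety worth flagging is that the statement requires $G$ abelian (so that irreducibles are one-dimensional and characters suffice); for non-abelian $G$ one would need to evaluate on matrix representations instead, but that case is not needed.
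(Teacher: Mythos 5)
Your proof is correct and takes essentially the same route as the paper's: both rest on the Wedderburn decomposition of $\mathbb{C}[G]$ into one-dimensional pieces indexed by irreducible characters, with the orthogonal idempotents $e_\mu$ (which the paper uses to write $\mathbb{C}[G]\cong\bigoplus_\mu\mathbb{C}e_\mu$ and you phrase as $\mathbb{C}[G]\cong\prod_\mu\mathbb{C}$) and the observation that $I$ is proper exactly when it kills some coordinate factor. The two presentations differ only in whether the decomposition is written as an internal direct sum of idempotent pieces or as an external product of fields.
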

\begin{proof}
	Since $G$ is finite abelian,  all of its finite-dimensional irreducible complex representations are $1$-dimensional. Consequently, we have a group ring decomposition $\mathbb{C}[G]\cong\mathbb{C}e_1\oplus\dots\oplus \mathbb{C}e_n$,
	 where $\lbrace e_i\rbrace_{i=1}^n$ are the orthogonal idempotents corresponding to the irreducible characters $\mu_i$,  and $\sum\limits_{i=1}^ne_i=1$. Let $\mu$ be the irreducible character associated to a given idempotent $e$. Then, for any element $\lambda=\sum\limits_{g\in G}\lambda_gg\in\mathbb{C}[G]$, we obtain:
	$$\lambda e=\sum\limits_{g\in G}\lambda_g ge=\sum\limits_{g\in G}\lambda_g\mu(g)e=\mu\left(\sum_{g\in G}\lambda_gg\right)e=\mu(\lambda)e$$
	This shows that $\lambda e=0$ if and only if $\mu\left(\lambda\right)=0$. Therefore, there exists an irreducible character $\mu:G\to\mathbb{C}^\times$ such that $\mu(p_i)=0~\forall~i=1,\dots, k$ if and only if there is an idempotent $e$ with $Ie=0$. In this case, since $\mathbb{C}[G]=\mathbb{C}[G]e\oplus\mathbb{C}[G](1-e)$ we conclude that $I\lneq\mathbb{C}[G]$.

	Conversely, suppose that no such character $\mu$ exists. Then, $Ie_i\neq 0$ for all $i=1,\dots,n$. Since each $\mathbb{C}e_i$ is one dimensional, we conclude that
		$$I=Ie_1\oplus\cdots\oplus Ie_n=\mathbb{C}e_1\oplus\cdots\oplus \mathbb{C}e_n=\mathbb{C}[G]$$
	as desired.
\end{proof}
\begin{corollary}\label{Corllary4.3}
	Let $G$ a finite abelian group and $\mathcal{P}=\lbrace p_i\rbrace_{i=1}^k$ be a set of elements of $(R[G])_S$. Then, the ideal $I$ generated by $\mathcal{P}$ is distinct from $(R[G])_S$ if and only if there exists an irreducible character $\mu:G\to \mathbb{C}^\times$ such that $\mu(p_i)=0~\forall~i=1,\dots,k$ .
\end{corollary}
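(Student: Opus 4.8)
The plan is to reduce the statement over the localized ring $(R[G])_S$ to the statement over $\mathbb{C}[G]$ already established in the previous Lemma, and then invoke that Lemma. Recall $R=\mathbb{C}[t^{\pm 1}]$ and $S\subset \mathbb{C}G[t^{\pm1}]$ is the multiplicative set of polynomials whose leading coefficient is a nonzero scalar multiple of a group element. The key observation is that each $p_i$ in fact lies in $\mathbb{C}[G]\subset (R[G])_S$ (in the intended application the $p_i$ are the elements $1+\varphi(vw)+\cdots+\varphi(vw)^{k-1}$, which have no $t$), so we are asking whether the ideal they generate in $(R[G])_S$ is proper, and we want to compare this with properness of the ideal they generate in $\mathbb{C}[G]$.

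First I would note $(R[G])_S = (\mathbb{C}[G][t^{\pm1}])_S$. The natural map $\mathbb{C}[G]\hookrightarrow \mathbb{C}[G][t^{\pm1}] \to (R[G])_S$ is a ring homomorphism, and the idempotent decomposition $\mathbb{C}[G]=\mathbb{C}e_1\oplus\cdots\oplus\mathbb{C}e_n$ (the $e_j$ the primitive idempotents attached to the irreducible characters $\mu_j$) pushes forward to a decomposition of $(R[G])_S$. Concretely, since the $e_j$ are central idempotents summing to $1$, we get $(R[G])_S = \bigoplus_j (R[G])_S e_j$, and each summand $(R[G])_S e_j$ is a localization of $R e_j \cong \mathbb{C}[t^{\pm1}]$, hence is either the zero ring or a nonzero commutative domain (in fact a localization of a PID). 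The point is that $e_j\notin S\cdot 0$, i.e. one checks $S$ does not meet the kernel of $\mathbb{C}[G][t^{\pm1}]\to \mathbb{C}[G][t^{\pm1}]e_j$: an element of $S$ has leading coefficient $zg$ with $z\neq 0$, and $\mu_j(zg)=z\mu_j(g)\neq 0$, so its image in $\mathbb{C}[t^{\pm1}]e_j$ still has nonzero leading coefficient and in particular is nonzero. Hence $(R[G])_S e_j \neq 0$ for every $j$.

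Now I would run the same two-directional argument as in the previous Lemma. If there is an irreducible character $\mu=\mu_j$ with $\mu(p_i)=0$ for all $i$, then (exactly as in the computation $\lambda e = \mu(\lambda)e$, which is valid in $\mathbb{C}[G]$ and remains valid after pushing to $(R[G])_S$) each $p_i e_j = \mu_j(p_i) e_j = 0$, so $I e_j = 0$; since $(R[G])_S = (R[G])_S e_j \oplus (R[G])_S(1-e_j)$ with $(R[G])_S e_j\neq 0$, the ideal $I$ cannot contain $1$, so $I\subsetneq (R[G])_S$. Conversely, if no such character exists, then for every $j$ there is some $p_i$ with $\mu_j(p_i)\neq 0$, hence $p_i e_j$ is a nonzero scalar multiple of $e_j$ and therefore a unit in the ring $(R[G])_S e_j$; thus $I e_j = (R[G])_S e_j$ for every $j$, and summing over $j$ gives $I = \bigoplus_j (R[G])_S e_j = (R[G])_S$.

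The only genuine subtlety — and the step I would be most careful about — is the claim that $S$ maps into the nonzerodivisors / that $(R[G])_S e_j\neq 0$, i.e. that localizing does not kill any of the blocks $\mathbb{C}e_j$; this is where the precise definition of $S$ (leading coefficient a nonzero scalar times a \emph{group element}, not an arbitrary element of $\mathbb{C}G$) is used, via $\mu_j(g)\neq 0$ for $g\in G$. Everything else is a formal transport of the previous Lemma's idempotent argument across the flat ring map $\mathbb{C}[G]\to (R[G])_S$, together with the elementary fact that in a finite product of rings an ideal is proper iff one of its projections is proper.
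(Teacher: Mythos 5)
Your proof follows the same route as the paper: decompose $(R[G])_S$ along the primitive central idempotents $e_1,\dots,e_n$ of $\mathbb{C}[G]$ and argue blockwise. In fact you are more careful than the paper on the key technical point, namely that each block $(R[G])_S e_j$ is nonzero (the paper invokes ``localization is exact and preserves direct sums'' and then asserts ``the same argument as in the previous Lemma,'' without explicitly ruling out that localizing at $S$ could kill a block); your observation that the leading coefficient of $s e_j$ is $z\mu_j(g)\neq 0$ is exactly what is needed, and it is where the specific shape of $S$ enters.

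There is, however, one place where your argument quietly proves less than what the Corollary states. You announce at the outset that in the intended application the $p_i$ lie in $\mathbb{C}[G]$, and in the converse direction you use this: the step ``$p_i e_j$ is a nonzero \emph{scalar multiple} of $e_j$, hence a unit'' only makes sense when $p_i\in\mathbb{C}[G]$, since only then does $p_i e_j=\mu_j(p_i)e_j$ with $\mu_j(p_i)\in\mathbb{C}$. The Corollary as stated takes $p_i\in(R[G])_S$ arbitrary, and for such $p_i$ the element $p_ie_j$ is a nonzero element of the block, not a scalar multiple of $e_j$. The fix is cheap and you already have all the ingredients: the image of $S$ under $\mu_j$ in $R e_j\cong\mathbb{C}[t^{\pm1}]$ is \emph{all} nonzero Laurent polynomials (given $\sum\beta_i t^i$ with $\beta_n\neq 0$, lift to $\beta_n\cdot 1_G\, t^n+\sum_{i<n}\beta_i\cdot 1_G\, t^i\in S$), so $(R[G])_S e_j\cong\mathbb{C}(t)$ is a field, and any nonzero $p_i e_j$ is automatically a unit. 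With that one extra sentence your blockwise argument proves the Corollary in the stated generality; without it, you have proved the special case used in the paper and flagged the restriction honestly, which is a legitimate but slightly weaker version. Either way the structure of the proof is the paper's.
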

\begin{proof}
	Let $Q(R)$ denote the field of fractions of $R$. Then, we have inclusions $\mathbb{C}[G]\hookrightarrow Q(R)[G]\hookrightarrow (R[G])_S$. As in the proof of the lemma, the decomposition  $\mathbb{C}[G]\cong\mathbb{C}e_1\oplus\dots\oplus \mathbb{C}e_n$ induces a decomposition:
	 $$Q(R)[G]\cong Q(R)e_0\oplus\dots\oplus Q(R)e_n$$
	 and tensoring with  $(R[G])_S$ we obtain:
	 $$(R[G])_S\cong (R[G])_Se_0\oplus\dots\oplus (R[G])_Se_n$$
	 Since localization is exact and preserves direct sums, the same argument as in the previous Lemma shows that $I\neq (R[G])_S$ if and only if there exists some idempotent $e$ with $Ie=0$, which is equivalent to the existence of a character $\mu$ such that $\mu(p_i)=0$ for all $i$.
\end{proof}
Now, we are ready to prove the main result of this section.
\begin{theorem}\label{Corollary 4.7}
	Let $A_\Gamma$ be an Artin group and let $\chi:A_\Gamma\to\mathbb{Z}$ be a non-zero discrete character such that $\mathrm{Liv}^\chi_0$ is connected and $\mathrm{Liv}^\chi$ is disconnected. Let $\Gamma_1$ and $\Gamma_2$ be unions of connected components of $\mathrm{Liv}^\chi$ such that $\Gamma_1\cup\Gamma_2=\mathrm{Liv}^\chi$ and $\Gamma_1\cap\Gamma_2=\emptyset$. Suppose that there exists a finite group $G$, an homomorphism $\varphi:A_\Gamma\to G$ such that the restriction $\restr{\varphi}{\Ker(\chi)}$ is an epimorphism and a character $\mu:G\to\mathbb{C}^\times$ such that for each dead edge  $e=\lbrace u,v\rbrace\in E(\Gamma)$ with $u\in V(\Gamma_1)$ and $v\in V(\Gamma_2)$ the value $\mu(\varphi(uv))$ is a non-trivial $\frac{l(e)}{2}$-th root of unity. Then, $[\chi]\notin\Sigma^1(A_\Gamma)$.
\end{theorem}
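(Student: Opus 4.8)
Here is the strategy I would follow for Theorem~\ref{Corollary 4.7}.

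\emph{Reduction to a homology computation.} Since $\restr{\varphi}{\Ker(\chi)}$ is an epimorphism, the lemma in Section~\ref{Section 3} showing that $\psi=(\chi,\varphi)$ is an epimorphism whenever $\restr{\varphi}{\Ker(\chi)}$ is applies, so $\psi\colon A_\Gamma\to\mathbb{Z}\times G$ is an epimorphism. Because $A_\Gamma$ is finitely presented, the remark following Lemma~\ref{Lemma 3.2} lets us invoke Lemma~\ref{Lemma 3.2} with $n=1$ without the $K(\pi,1)$-conjecture; so it is enough to prove $\dim_{\mathbb{C}}H_1(\Ker(\psi),\mathbb{C})=\infty$. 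Next I would run the graph reductions from the start of this section: using Proposition~\ref{Proposition 1.6} I delete the dead vertices (passing to the parabolic subgroup on the living vertices, which cannot enlarge $\Sigma^1$), add $2$-labelled edges inside $\Gamma_1$ and inside $\Gamma_2$, and lower every even internal label $\ge 4$ to $2$. This replaces $\Gamma$ by a graph $\Gamma_1'\cup\Gamma_2'$ which is a union of two complete subgraphs with all internal labels $2$ or odd (so every internal edge is living), while the edges joining $\Gamma_1'$ and $\Gamma_2'$ are exactly the dead edges of the given splitting, with their labels and endpoints unchanged. The key bookkeeping point — and the part I expect to be the most delicate — is that each of these moves is a quotient through which $\chi$ and $\varphi$ (here one uses that $G$ is abelian, so each killed relator is sent to $1$), hence $\psi$, factors, and that $\mu$ together with the elements $\varphi(uv)$ attached to the relevant dead edges are untouched; thus it suffices to prove $[\chi]\notin\Sigma^1$ after this reduction, and from now on $\Gamma=\Gamma_1'\cup\Gamma_2'$.

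\emph{Computing $H_1$ of the normalized complex.} By Lemma~\ref{Lemma 4.1}, $\dim_{\mathbb{C}}H_1(\Ker(\psi),\mathbb{C})=\infty$ iff $H_1\bigl(\widetilde{C}_\ast(\mathrm{Sal}_\Gamma^\psi)\bigr)\neq 0$. In the normalized complex $\partial_1(\widetilde{\sigma}_v^\psi)=\widetilde{\sigma}_\emptyset^\psi$ for every vertex, so $\ker\partial_1$ is generated by the differences $\widetilde{\sigma}_v^\psi-\widetilde{\sigma}_w^\psi$; the boundary of an internal (living) edge, whose label is $2$ or odd, is $\widetilde{\sigma}_v^\psi-\widetilde{\sigma}_w^\psi$, whereas the boundary of a dead edge $e=\{u,v\}$ with $l(e)=2k$ is $\bigl(1+\varphi(uv)+\cdots+\varphi(uv)^{k-1}\bigr)(\widetilde{\sigma}_u^\psi-\widetilde{\sigma}_v^\psi)$. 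Since $\Gamma_1'$ and $\Gamma_2'$ are connected through living edges and contain no dead edges, quotienting by the living-edge boundaries collapses all vertex classes of $\Gamma_1'$ to one class and all of $\Gamma_2'$ to another; fixing $v_1\in V(\Gamma_1')$, $v_2\in V(\Gamma_2')$ and writing $[x]=[\widetilde{\sigma}_{v_1}^\psi-\widetilde{\sigma}_{v_2}^\psi]$, one obtains $H_1\bigl(\widetilde{C}_\ast(\mathrm{Sal}_\Gamma^\psi)\bigr)\cong (R[G])_S/I$, where $I$ is the ideal generated by the elements $1+\varphi(uv)+\cdots+\varphi(uv)^{\frac{l(e)}{2}-1}$ as $e=\{u,v\}$ ranges over the dead edges between $\Gamma_1'$ and $\Gamma_2'$. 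Hence the homology is non-zero exactly when $I\subsetneq (R[G])_S$.

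\emph{The character-theoretic conclusion.} The generators of $I$ lie in $\mathbb{C}[G]\subset (R[G])_S$, since on a dead edge $\chi(u)+\chi(v)=0$ leaves no power of $t$; so Corollary~\ref{Corllary4.3} applies and says $I\subsetneq (R[G])_S$ precisely when there is an irreducible character $\nu\colon G\to\mathbb{C}^\times$ with $\nu\bigl(1+\varphi(uv)+\cdots+\varphi(uv)^{\frac{l(e)}{2}-1}\bigr)=0$ for every such dead edge $e$. For a character $\nu$ one has $\nu\bigl(1+g+\cdots+g^{m-1}\bigr)=\frac{\nu(g)^m-1}{\nu(g)-1}$ whenever $\nu(g)\neq 1$, and this vanishes exactly when $\nu(g)$ is a non-trivial $m$-th root of unity. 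The hypothesis on $\mu$ says precisely that $\mu(\varphi(uv))$ is a non-trivial $\tfrac{l(e)}{2}$-th root of unity for each relevant dead edge; since $G$ is finite abelian every homomorphism $G\to\mathbb{C}^\times$ is an irreducible character, so taking $\nu=\mu$ in Corollary~\ref{Corllary4.3} yields $I\subsetneq (R[G])_S$. Therefore $H_1\bigl(\widetilde{C}_\ast(\mathrm{Sal}_\Gamma^\psi)\bigr)\neq 0$, so $\dim_{\mathbb{C}}H_1(\Ker(\psi),\mathbb{C})=\infty$, and Lemma~\ref{Lemma 3.2} gives $[\chi]\notin\Sigma^1(A_\Gamma)$.

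In summary, the argument is an assembly of Lemmas~\ref{Lemma 3.2} and~\ref{Lemma 4.1}, the differential formulas of Proposition~\ref{FormulasDiferential}, and Corollary~\ref{Corllary4.3}, with $\mu$ supplying exactly the character demanded by the last of these. The only step requiring real care is verifying that the reductions of Proposition~\ref{Proposition 1.6} preserve all the relevant data ($\varphi$, $\mu$, the dead edges between the two parts and their labels) and that passing to the parabolic subgroup on the living vertices cannot lose membership in $\Sigma^1$.
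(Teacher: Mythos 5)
Your proposal is correct and follows essentially the same route as the paper: reduce via Proposition~\ref{Proposition 1.6} to two complete subgraphs joined only by the given dead edges, pass to the normalized complex $\widetilde{C}_\ast(\mathrm{Sal}_\Gamma^\psi)$ and use Lemma~\ref{Lemma 4.1} to identify $H_1$ with $(R[G])_S/I$, then apply Corollary~\ref{Corllary4.3} with $\mu$ and conclude by Lemma~\ref{Lemma 3.2}. You are in fact a bit more explicit than the paper on the bookkeeping you flag (why $\varphi$, hence $\psi$, descends through the reduction quotients because $G$ is abelian, and why the generators of $I$ lie in $\mathbb{C}[G]$ so the character-theoretic criterion applies), which are points the paper leaves implicit.
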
 
\begin{proof}
	Applying the procedure outlined at the beginning of the section, we may assume that $\Gamma_1$ and $\Gamma_2$ are two complete graphs with no even labels $\geq 4$. In this situation we have that $H_1\left(\widetilde{C}_\ast(\text{Sal}_\Gamma^\chi),\mathbb{F}\right)\neq0$ if and only if the ideal $I$ generated by the elements $p_e=1+\varphi(uv)+\dots+\varphi(uv)^{\frac{l(e)}{2}-1}$ for all dead edges $e=\lbrace u,v\rbrace\in E(\Gamma)$ with $u\in V(\Gamma_1)$ and $v\in V(\Gamma_2)$ is a proper subset of $(R[G])_S$, i.e. $I\subsetneq (RG)_S$. For every such edge $e$ we have:
	$$\mu(p_e)=\mu\left(1+\varphi(uv)+\cdots+\varphi(uv)^{\frac{l(e)}{2}-1}\right)=1+\mu(\varphi(uv))+\cdots+\mu(\varphi(uv))^{\frac{l(e)}{2}-1}$$
	Since $\mu(\varphi(uv))$ is a $\frac{l(e)}{2}$-root of unity, the sum vanish, i.e. $\mu(p_e)=0$. Hence, by Corollary \ref{Corllary4.3}, it follows that $I\subsetneq (RG)_S$. Finally, applying Lemma \ref{Lemma 4.1} and Lemma \ref{Lemma 3.2}, we conclude that $[\chi]\notin\Sigma^1(A_\Gamma)$.
\end{proof}
To prove Theorem \ref{Theorem 1.1} we need the following version of the above Theorem:
\begin{corollary}\label{CorollaryArtinColor}
	Let $A_\Gamma$ be an Artin group and let $\chi:A_\Gamma\to\mathbb{Z}$ be a non-zero discrete character such that $\mathrm{Liv}^\chi_0$ is connected and $\mathrm{Liv}^\chi$ is disconnected. Let $\Gamma_1$ and $\Gamma_2$ be unions of connected components of $\mathrm{Liv}^\chi$ such that $\Gamma_1\cup\Gamma_2=\mathrm{Liv}^\chi$ and $\Gamma_1\cap\Gamma_2=\emptyset$. Let $\Lambda\subset\Gamma_{>2}^{\mathrm{even}}$ be the subgraph generated by the edges with one endpoint in $\Gamma_1\cap \mathrm{Liv}^\chi_0$ and the other endpoint in $\Gamma_2\cap \mathrm{Liv}^\chi_0$. Suppose that $\mathcal{C}$ is an $n$-balanced colouring of $\Lambda$, where each colour has an associated prime numbers $p_1,\dots,p_n$. Assume that there is a homeomorphism $\varphi: A_\Gamma\to C_{p_1}\times\dots\times C_{p_n}$ such that for each edge  $e=\lbrace u,v\rbrace\in E(\Lambda)$ we have $\varphi(uv)\in\lbrace g_{\mathcal{C}(e)}, g_{\mathcal{C}(e)}^{-1}\rbrace$, where $g_i$ is the generator of $C_{p_i}$. Then, $[\chi]\notin\Sigma^1(A_\Gamma)$.
\end{corollary}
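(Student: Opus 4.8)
The plan is to deduce Corollary \ref{CorollaryArtinColor} from Theorem \ref{Corollary 4.7} by checking that the hypotheses of the latter are satisfied for the homomorphism $\varphi:A_\Gamma\to G$ with $G=C_{p_1}\times\dots\times C_{p_n}$ supplied in the statement. The first thing I would do is address a technical mismatch: Theorem \ref{Corollary 4.7} requires that $\restr{\varphi}{\Ker(\chi)}$ be an epimorphism, whereas the corollary only prescribes the values $\varphi(uv)$ on certain edge-products. So the first step is to arrange, possibly after modifying $\varphi$ on vertices outside $\Lambda$ or after passing to the image, that $\varphi$ restricted to $\Ker(\chi)$ surjects onto $G$; since each $C_{p_i}$ is generated by $g_i=\varphi(uv)^{\pm1}$ for some dead edge $e=\{u,v\}$ with $uv\in\Ker(\chi)$ (because $\chi(u)+\chi(v)=0$ as $e$ is dead), the element $uv$ already lies in $\Ker(\chi)$ and maps onto a generator of the corresponding factor, so $\restr{\varphi}{\Ker(\chi)}$ is automatically onto. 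This is the point that needs a careful sentence or two.

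Next I would produce the character $\mu:G\to\mathbb{C}^\times$ demanded by Theorem \ref{Corollary 4.7}. Because $G$ is a direct product of cyclic groups of prime order, I can define $\mu=\mu_1\otimes\dots\otimes\mu_n$ where each $\mu_i:C_{p_i}\to\mathbb{C}^\times$ sends the generator $g_i$ to a fixed primitive $p_i$-th root of unity $\zeta_i$. The key computation is then that for every edge $e=\{u,v\}\in E(\Lambda)$ of colour $\mathcal{C}(e)=i$, we have $\varphi(uv)\in\{g_i,g_i^{-1}\}$, hence $\mu(\varphi(uv))\in\{\zeta_i,\zeta_i^{-1}\}$, which is a non-trivial $p_i$-th root of unity. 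Since the colouring $\mathcal{C}$ is $n$-balanced, $p_i$ divides $\frac{l(e)}{2}$, so $\mu(\varphi(uv))$ is in particular a non-trivial $\frac{l(e)}{2}$-th root of unity. This is exactly the hypothesis on $\mu(\varphi(uv))$ appearing in Theorem \ref{Corollary 4.7}.

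One subtlety I would flag: Theorem \ref{Corollary 4.7} speaks of dead edges $e=\{u,v\}$ with $u\in V(\Gamma_1)$ and $v\in V(\Gamma_2)$ directly inside $\Gamma$, while the corollary phrases things in terms of the graph $\Gamma_{>2}^{\mathrm{even}}$ and its subgraph $\Lambda$, whose edges correspond to \emph{classes} of vertices of $\Gamma$ joined by odd-label paths. So I would spell out the dictionary: an edge of $\Lambda$ between the class of $u\in\Gamma_1\cap\mathrm{Liv}_0^\chi$ and the class of $v\in\Gamma_2\cap\mathrm{Liv}_0^\chi$ records an even-label $\geq 4$ edge between (a representative of) $\Gamma_1$ and (a representative of) $\Gamma_2$ in $\Gamma$, which is precisely a dead edge in the sense needed, since vertices in $\Gamma_1$ and $\Gamma_2$ are separated and the edge connecting them must be dead (its endpoints satisfy $\chi(u)+\chi(v)=0$). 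The collapsing of odd edges only affects how products $\varphi(uv)$ are bookkept and does not change that each relevant dead edge of $\Gamma$ gets assigned a colour and a $\varphi$-value with the required root-of-unity property.

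With the dictionary and the explicit $\mu$ in hand, the conclusion is immediate: all the hypotheses of Theorem \ref{Corollary 4.7} hold, so $[\chi]\notin\Sigma^1(A_\Gamma)$. The main obstacle I anticipate is not any deep argument but the careful matching of notation between $\Gamma$ and $\Gamma_{>2}^{\mathrm{even}}$ together with verifying the surjectivity of $\restr{\varphi}{\Ker(\chi)}$; once those are settled, the proof is a short application of the previous theorem plus the observation that a $p_i$-th root of unity is an $\frac{l(e)}{2}$-th root of unity whenever $p_i\mid\frac{l(e)}{2}$.
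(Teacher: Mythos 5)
Your proposal is correct and follows essentially the same route as the paper: verify that $\restr{\varphi}{\Ker(\chi)}$ is surjective (using that $uv\in\Ker(\chi)$ for every $\{u,v\}\in E(\Lambda)$ since such edges are dead), build $\mu$ by sending each $g_i$ to a primitive $p_i$-th root of unity, observe that $p_{\mathcal{C}(e)}\mid\frac{l(e)}{2}$ makes $\mu(\varphi(uv))$ a non-trivial $\frac{l(e)}{2}$-th root of unity, and invoke Theorem \ref{Corollary 4.7}. Your extra discussion of the dictionary between $\Lambda\subset\Gamma_{>2}^{\mathrm{even}}$ and the dead edges of $\Gamma$ is a useful clarification the paper leaves implicit, but it is not a different argument.
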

\begin{proof}
	Since for each $e=\lbrace u,v\rbrace\in E(\Lambda)$ we have $\chi(uv)=0$ it follows that $\restr{\varphi}{\Ker(\chi)}$ is an epimorphism. Define a character $\mu: C_{p_1}\times\dots\times C_{p_n}\to \mathbb{C}^\times$ by setting $\mu(g_i)=e^{\frac{2\pi i}{p_i}}$ for each $i=1,\dots,n$. Then, for each edge $\lbrace u,v\rbrace\in E(\Lambda)$ we have $\mu(\varphi(uv))\in \lbrace  e^{{\frac{2\pi i}{p_{\mathcal{C}(e)}}}} e^{-{\frac{2\pi i}{p_{\mathcal{C}(e)}}}}\rbrace$. Because each $p_{\mathcal{C}(e)}$ divides $\frac{l(\lbrace u,v\rbrace)}{2}$ it follows that $\mu(\varphi(uv))$ is a non-trivial $\frac{l(\lbrace u,v\rbrace)}{2}$-root of unity. Hence, we can apply Theorem \ref{Corollary 4.7} and $[\chi]\notin\Sigma^1(A_\Gamma)$.
\end{proof}
\subsection{Parity of balanced Artin groups}
In this section we will introduce the notion of parity of pairs of edges, which will be a technical tool needed to prove Theorem \ref{Theorem 1.1}.

Consider a balanced graph $\Gamma$ with no closed paths of odd length equipped with an $n$-balanced colouring $\mathcal{C}$. For each $i\in\lbrace 1,\dots,n\rbrace$, denote by $E_i(\Gamma)$ the set of edges of colour $i$ in $\Gamma$. Define a relation $\sim$ on $E_i(\Gamma)$ as follows: for edges $e,f\in E_i(\Gamma)$, say that $e\sim f$ if there exists a finite sequence of minimal simple cycles $\mathcal{P}_1,\dots,\mathcal{P}_m$ such that $e\in\mathcal{P}_1,f\in\mathcal{P}_m$ and for each $k=1,\dots,m-1$ the cycles $\mathcal{P}_k$ and $\mathcal{P}_{k+1}$ share at least one edge of colour $i$.
It follows that $\sim$ is an equivalence relation on $E_{i}(\Gamma)$, partitioning the set of edges of colour $i$ into equivalence classes.
\begin{definition}
	Let $\Gamma$ be a balanced graph with no closed paths of odd length. Let $\mathcal{C}$ be an $n$-balanced colouring of $\Gamma$. Suppose $e_1,e_2\in E(\Gamma)$
	are two distinct edges of the same colour, i.e. $\mathcal{C}(e_1)=\mathcal{C}(e_2)$, and that they lie on the same equivalence class. Let $\mathcal{P}$ be a simple cycle containing both $e_1$ and $e_2$. We define the \textbf{parity} of the pair $(e_1,e_2)$ in $\mathcal{P}$ as:
	$$\mathrm{parity}_\mathcal{P}(e_1,e_2)=(-1)^l$$
	where $l$ is the numbers of edges lying between $e_1$ and $e_2$ along the cycle $\mathcal{P}$. 
\end{definition}

\begin{lemma}
	The parity map is well-defined and does not depend on the choice of the simple cycle.
\end{lemma}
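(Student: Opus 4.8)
The plan is to show that $\mathrm{parity}_\mathcal{P}(e_1,e_2)$ is independent of the simple cycle $\mathcal{P}$ containing $e_1$ and $e_2$, which then licenses the notation $\mathrm{parity}(e_1,e_2)$. The first observation is a reformulation: if $\mathcal{P}$ has length $L$ and there are $l$ edges strictly between $e_1$ and $e_2$ along one of the two arcs of $\mathcal{P}$, then the number of edges along the other arc is $L - 2 - l$; since $\mathcal{C}(e_1) = \mathcal{C}(e_2)$ and $\Gamma$ is balanced with no closed paths of odd length, $\mathcal{P}$ (being a closed path, hence of even length) contains \emph{exactly two} edges of colour $\mathcal{C}(e_1)$, namely $e_1$ and $e_2$ themselves, so $L$ is even and therefore $(-1)^{L-2-l} = (-1)^l$. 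This shows $\mathrm{parity}_\mathcal{P}(e_1,e_2)$ is well-defined for a \emph{fixed} $\mathcal{P}$ (it does not matter which arc we count along), and reduces the problem to comparing two different simple cycles.

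Next I would reduce to the case of two simple cycles $\mathcal{P}$ and $\mathcal{P}'$ that share an edge of colour $\mathcal{C}(e_1)$, using the equivalence relation $\sim$: since $e_1 \sim e_2$, there is a chain of minimal simple cycles $\mathcal{P}_1, \dots, \mathcal{P}_m$ with consecutive cycles sharing a monochromatic edge, and by transitivity it suffices to handle one step of the chain together with the general fact that any simple cycle through $e_1, e_2$ has the same parity as some cycle in the chain. Concretely, suppose $\mathcal{P}$ and $\mathcal{P}'$ are simple cycles both containing $e_1$ and $e_2$. Consider the symmetric difference (as edge sets) $\mathcal{P} \triangle \mathcal{P}'$; this is an even subgraph, hence decomposes into edge-disjoint cycles, and I would track how the two arcs of $\mathcal{P}$ between $e_1$ and $e_2$ relate to those of $\mathcal{P}'$ through these cycles. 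The key point to exploit is that every cycle appearing in this decomposition has even length and contains an even number of edges of each colour (either zero or two, by the balancedness hypothesis), so replacing an arc of $\mathcal{P}$ by a "detour" through such a cycle changes the count of edges between $e_1$ and $e_2$ by an even number, preserving $(-1)^l$.

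The cleanest route, and the one I would actually write up, is to define a $\mathbb{Z}/2$-valued function directly. Orient an arbitrary reference simple cycle and, for each colour-$i$ equivalence class, use the balanced colouring to assign to every edge $e$ of colour $i$ a "position parity" $\pi(e) \in \mathbb{Z}/2$ so that for any simple cycle $\mathcal{P}$ containing two colour-$i$ edges $e_1, e_2$ one has $\mathrm{parity}_\mathcal{P}(e_1,e_2) = (-1)^{\pi(e_1) + \pi(e_2)}$; the existence of such a globally consistent $\pi$ on each class is exactly what the chain-of-minimal-cycles definition of $\sim$ guarantees, provided consistency holds around every minimal simple cycle and around the overlap of two minimal simple cycles. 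Checking consistency for a single minimal cycle is the content of the first paragraph; checking it for two minimal cycles $\mathcal{P}_k, \mathcal{P}_{k+1}$ sharing a colour-$i$ edge $f$ amounts to writing, for colour-$i$ edges $e \in \mathcal{P}_k$ and $e' \in \mathcal{P}_{k+1}$, $\mathrm{parity}_{\mathcal{P}_k}(e,f)\cdot \mathrm{parity}_{\mathcal{P}_{k+1}}(f,e') = \mathrm{parity}_{\mathcal{P}}(e,e')$ for the cycle $\mathcal{P}$ obtained by gluing $\mathcal{P}_k$ and $\mathcal{P}_{k+1}$ along $f$ and deleting $f$ — a computation that again only uses that all these cycles are even and contain exactly two colour-$i$ edges. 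The main obstacle is precisely this gluing/concatenation lemma: one must verify that $\mathcal{P}_k$ and $\mathcal{P}_{k+1}$ overlap in a single edge (or in a path, in which case one argues along the path) so that the symmetric difference is again a \emph{simple} cycle to which the single-cycle case applies; handling the possibility that the two minimal cycles share more than one edge, or a disconnected set of edges, is the only genuinely delicate point, and it is controlled by minimality of the cycles together with the balancedness constraint that forbids too many monochromatic edges in any closed path.
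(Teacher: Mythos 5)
Your first paragraph matches the paper's argument exactly: for a fixed cycle $\mathcal{P}$, the two arcs between $e_1$ and $e_2$ have lengths $l$ and $L-2-l$ with $L$ even (since $\Gamma$ has no odd closed paths), so $(-1)^l=(-1)^{L-2-l}$ and the parity is independent of the arc. That part is fine, and the invocation of balancedness there is an overcomplication — all you need is that $\Gamma$ is bipartite.

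From there the proposal loses the thread. The paper's second step is a single observation you never quite land on: given two simple cycles $\mathcal{P}$ and $\mathcal{Q}$ through $e_1,e_2$, concatenate the arc of $\mathcal{P}$ from $e_1$ to $e_2$ with the arc of $\mathcal{Q}$ from $e_2$ back to $e_1$. This is a closed walk, and since $\Gamma$ has no closed paths of odd length (equivalently, $\Gamma$ is bipartite, so \emph{every} closed walk has even length), the total length $l_\mathcal{P}+l_\mathcal{Q}$ (up to an even correction for $e_1,e_2$) is even, forcing $(-1)^{l_\mathcal{P}}=(-1)^{l_\mathcal{Q}}$. That is the whole proof. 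Your second paragraph circles this idea via symmetric differences and cycle decompositions, which is both unnecessary and creates the very complications you then flag as obstacles (whether the pieces are simple cycles, how the minimal cycles overlap, etc.): none of that matters because the evenness of a closed \emph{walk} in a bipartite graph is automatic and does not require simplicity. Your third paragraph is worse than unnecessary — it is circular: assigning a globally consistent ``position parity'' $\pi$ to each edge is precisely the content of the subsequent lemma (Lemma \ref{ParityLemma}), whose proof \emph{uses} the well-definedness you are trying to establish here. You cannot ground the present lemma on that construction without a vicious circle.

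So: the right idea (a mismatch in parities would produce an odd-length closed walk) is present in embryo in paragraph two, but you never state it cleanly, you raise and leave unresolved the side issues your detour creates, and you propose as an alternative a route that presupposes the result. To fix the proposal, delete paragraphs two and three and replace them with the one-line concatenation argument above.
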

\begin{proof}
	Suppose $e_1,e_2\in E(\Gamma)$
	are two edges of the same colour, that there exists a simple cycle $\mathcal{P}$ of even length containing both $e_1$ and $e_2$ and that $l$ is the number of intermediate edges between $e_1$ and $e_2$ in $\mathcal{P}$. Since $\mathcal{P}$ has even length $L$, the complementary arc from $e_1$ to $e_2$ has length $L-l-2$ and $(-1)^l=(-1)^{L-l-2}$. Which shows that the parity does not depend on the orientation of $\mathcal{P}$.
	
	Suppose that $\mathcal{P}$ and $\mathcal{Q}$ are two simple cycles of even length both containing $e_1$ and $e_2$. Denote by $l_\mathcal{P}$ and $l_{\mathcal{Q}}$the number of edges between $e_1$ and $e_2$ along $\mathcal{P}$ and $\mathcal{Q}$, respectively. Consider the walk $\mathcal{R}$ obtained by traversing from $e_1$ to $e_2$ along $\mathcal{P}$ and then $\mathcal{Q}$ from $e_2$ to $e_1$. This is a closed path of length $l_\mathcal{P}+l_{\mathcal{Q}}$, which is an even number by hypothesis. Hence $(-1)^{l_\mathcal{P}}=(-1)^{l_{\mathcal{Q}}}$, showing that the parity is independent of the chosen simple cycle. 
\end{proof}
Since the parity does not depend on the choice of the simple cycle we will omit the path and denote it as $\mathrm{parity}(e_1,e_2)$.

\begin{lemma}\label{ParityLemma} Let $\Gamma$ be a balanced graph with no closed paths of odd length and let $\mathcal{C}$ be an $\mathrm{n}$-balanced colouring. Then, there exists a function $\mathrm{p}:E(\Gamma)\to\lbrace 1,-1\rbrace$ such that for any pair of edges $e_1,e_2\in E(\Gamma)$ of the same colour that lie on the same equivalence class we have:
	$$\mathrm{parity}(e_1,e_2)=\mathrm{p}(e_1)\mathrm{p}(e_2)$$
\end{lemma}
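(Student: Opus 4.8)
The plan is to construct the function $\mathrm{p}$ one equivalence class at a time, and within each class by a "spanning-tree-and-check-the-cycles" argument on an auxiliary graph. Fix a colour $i$ and an equivalence class $\mathcal{E}$ of edges of colour $i$ under $\sim$. First I would form an auxiliary graph $H_\mathcal{E}$ whose vertices are the edges of $\Gamma$ lying in $\mathcal{E}$, with an edge joining $e$ and $f$ whenever $e$ and $f$ both lie on a common minimal simple cycle of $\Gamma$; by the definition of $\sim$ this $H_\mathcal{E}$ is connected. I would then pick a spanning tree $\mathcal{T}$ of $H_\mathcal{E}$, fix a root $e_0\in\mathcal{E}$, set $\mathrm{p}(e_0)=1$, and propagate along $\mathcal{T}$ by the rule $\mathrm{p}(f)=\mathrm{parity}(e,f)\cdot\mathrm{p}(e)$ whenever $\{e,f\}\in\mathcal{T}$. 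This defines $\mathrm{p}$ on all of $\mathcal{E}$; doing this for every class and every colour, and assigning $\mathrm{p}$ arbitrarily (say $1$) to edges whose colour-class is a singleton, yields a function $\mathrm{p}:E(\Gamma)\to\{1,-1\}$.

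Next I would verify the required identity $\mathrm{parity}(e_1,e_2)=\mathrm{p}(e_1)\mathrm{p}(e_2)$ for an arbitrary pair $e_1,e_2$ of the same colour in the same class. The key structural observation is a \emph{cocycle identity} for the parity map: if $e_1,e_2,e_3$ are three edges of colour $i$ in the same class, then
$$\mathrm{parity}(e_1,e_2)\cdot\mathrm{parity}(e_2,e_3)=\mathrm{parity}(e_1,e_3).$$
Granting this, an induction on the tree distance in $\mathcal{T}$ shows $\mathrm{parity}(e_0,f)=\mathrm{p}(f)$ for every $f\in\mathcal{E}$ (the base case is $\mathrm{parity}(e_0,e_0)=1$, and the inductive step is exactly the cocycle identity applied to $e_0$, the parent $e$, and $f$). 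Then for general $e_1,e_2$ we get $\mathrm{p}(e_1)\mathrm{p}(e_2)=\mathrm{parity}(e_0,e_1)\mathrm{parity}(e_0,e_2)=\mathrm{parity}(e_1,e_0)\mathrm{parity}(e_0,e_2)=\mathrm{parity}(e_1,e_2)$, using $\mathrm{parity}(e_0,e_1)=\mathrm{parity}(e_1,e_0)$ (a consequence of the orientation-independence already established) and the cocycle identity once more.

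The main obstacle is proving the cocycle identity, since a priori each of $\mathrm{parity}(e_1,e_2)$, $\mathrm{parity}(e_2,e_3)$, $\mathrm{parity}(e_1,e_3)$ may be witnessed by a \emph{different} simple cycle, and one must glue these cycles together. Here I would exploit the hypothesis that $\Gamma$ is balanced with no closed paths of odd length: concatenating an $e_1$-to-$e_2$ arc of a cycle through $e_1,e_2$, an $e_2$-to-$e_3$ arc of a cycle through $e_2,e_3$, and an $e_3$-to-$e_1$ arc of a cycle through $e_1,e_3$ produces a closed walk whose total length is $\ell_{12}+\ell_{23}+\ell_{31}$ plus an even contribution from the edges $e_1,e_2,e_3$ themselves; since every closed walk in $\Gamma$ has even length (as $\Gamma$ has no odd closed paths, hence is bipartite), the sum $\ell_{12}+\ell_{23}+\ell_{31}$ is even, which is precisely $(-1)^{\ell_{12}}(-1)^{\ell_{23}}(-1)^{\ell_{31}}=1$, i.e. the cocycle identity. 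A minor technical point to dispatch first is that any two edges of colour $i$ in the same $\sim$-class actually lie on a \emph{common} simple cycle (so that $\mathrm{parity}$ is defined on every such pair); this follows by chaining the minimal cycles $\mathcal{P}_1,\dots,\mathcal{P}_m$ from the definition of $\sim$ and using the balanced condition (each minimal cycle has even length and contains exactly two edges of colour $i$) to splice consecutive cycles along a shared colour-$i$ edge into a single simple cycle through both $e$ and $f$. Once these combinatorial lemmas are in place, the construction and verification above complete the proof.
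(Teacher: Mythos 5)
Your proposal is correct and follows essentially the same route as the paper: both define $\mathrm{p}$ on each equivalence class by fixing a base edge $e_0$ and setting $\mathrm{p}(f)=\mathrm{parity}(e_0,f)$ (your spanning-tree propagation collapses to this, as you yourself note), and both verify the required identity via the cocycle relation $\mathrm{parity}(e_1,e_2)\,\mathrm{parity}(e_2,e_3)=\mathrm{parity}(e_1,e_3)$ obtained by gluing arcs into a closed walk. You also usefully flag two points the paper passes over silently, namely that two edges in the same $\sim$-class actually lie on a common simple cycle, and that the glued walk has even length.

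One step deserves to be made tighter. Your phrase ``plus an even contribution from the edges $e_1,e_2,e_3$ themselves'' is not automatic from bipartiteness alone: a priori the number $c\in\{0,1,2,3\}$ of the $e_i$ one must traverse to close up the three arcs has no fixed parity, and without more input the glued walk could have length $\ell_{12}+\ell_{23}+\ell_{31}+c$ with $c$ odd, which would break the argument. The balanced hypothesis is what forces $c$ to be even: by balancedness each cycle $\mathcal{P}_{jk}$ contains exactly two colour-$i$ edges, namely $e_j$ and $e_k$, so the three connecting arcs carry no colour-$i$ edges and the glued closed walk contains exactly $c$ edges of colour $i$; since that walk (a sub-multigraph of $\Gamma$ with all even degrees) decomposes into simple cycles, each of which has $0$ or $2$ colour-$i$ edges by the balanced condition, $c$ must be even. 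The paper's own argument is comparably compressed at exactly this point, so this is a remark on exposition rather than a gap relative to the source.
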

\begin{proof}
	Consider a colour $i\in\lbrace 1,\dots,n\rbrace$ and choose an arbitrary edge $e\in E_i(\Gamma)$. Define $\mathrm{p}(e)=1$ and for each $e_1\in E_i(\Gamma)$ in the same equivalence class define $\mathrm{p}(e_1)=\mathrm{parity}(e,e_1)$.
	Assume that $e_1,e_2\in E_i(\Gamma)$ are two edges in the same equivalence class as $e$. By definition $\mathrm{p}(e_1)\mathrm{p}(e_2)=\mathrm{parity}(e,e_1)\mathrm{parity}(e,e_2)$
	Now, consider the simple cycle from $e_1$ to $e_2$ formed by concatenating the path from $e_1$ to $e$ and then from $e$ to $e_2$. The parity between $e_1$ and $e_2$ is:
	$$\mathrm{parity}(e,e_1)\mathrm{parity}(e,e_2)=\mathrm{parity}(e_1,e_2)$$
	Thus, the desired identity holds, and the function $\mathrm{p}$ is well-defined on each equivalence class.
\end{proof}
\subsection{Proof of Theorem 1.1}
Using Corollary \ref{CorollaryArtinColor} we are ready to prove Theorem \ref{Theorem 1.1}.
\begin{proposition}
	Let $A_\Gamma$ be  an Artin group. Suppose that there exists a prime number $p$ dividing $\frac{l(e)}{2}$ for every $e\in E(\Gamma)$ with even label $l(e)>2$. Then, $A_\Gamma$ satisfies the $\Sigma^1$-conjecture.
\end{proposition}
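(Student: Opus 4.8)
The plan is to reduce, via Proposition~\ref{Proposition 2.9}, to a discrete character $\chi:A_\Gamma\to\mathbb{Z}$ with $\mathrm{Liv}_0^\chi$ connected and $\mathrm{Liv}^\chi$ disconnected, and then to apply Corollary~\ref{CorollaryArtinColor} with a single cyclic group $G=C_p$. Fix a splitting $\mathrm{Liv}^\chi=\Gamma_1\sqcup\Gamma_2$ into two non-empty unions of connected components, and let $\Lambda\subset\Gamma_{>2}^{\mathrm{even}}$ be the subgraph spanned by the dead edges running between $\Gamma_1\cap\mathrm{Liv}_0^\chi$ and $\Gamma_2\cap\mathrm{Liv}_0^\chi$. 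Every edge of $\Lambda$ has even label $>2$, so by hypothesis the prime $p$ divides $\tfrac{l(e)}{2}$ for all such $e$; thus the constant colouring $\mathcal{C}\equiv 1$ is a $1$-balanced colouring of $\Lambda$ with associated prime $p_1=p$. It then suffices to construct a homomorphism $\varphi:A_\Gamma\to C_p$ with $\varphi(uv)\in\{g,g^{-1}\}$ for every edge $\{u,v\}\in E(\Lambda)$, where $g$ generates $C_p$.

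\textbf{Constructing $\varphi$.}
A homomorphism $A_\Gamma\to C_p=\langle g\rangle$ is the same as an assignment $v\mapsto \varphi(v)=g^{a_v}$ of exponents $a_v\in\mathbb{Z}/p$ to the vertices, subject to one compatibility condition per edge: for an edge with odd label the two endpoints must be sent to the same element (the braid relation of odd length forces $\varphi(u)=\varphi(v)$), while for an edge with even label there is no constraint (the relation $(uv)^{k}=(vu)^{k}$ maps to $g^{k(a_u+a_v)}=g^{k(a_u+a_v)}$, which always holds). So $\varphi$ is simply a $\mathbb{Z}/p$-labelling of the vertices of $\Gamma_{>2}^{\mathrm{even}}$ — recall a vertex of $\Gamma_{>2}^{\mathrm{even}}$ is an odd-label-connected class of vertices of $\Gamma$, so constancy along odd labels is automatic. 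What I need is a labelling $a:V(\Gamma_{>2}^{\mathrm{even}})\to\mathbb{Z}/p$ such that for each edge $\{u,v\}$ of $\Lambda$ one has $a_u+a_v\equiv\pm1\pmod p$. Equivalently, orient each vertex class by a sign and declare $a$ to be "$\tfrac12$" on one side and "$-\tfrac12$" on the other of each dead edge of $\Lambda$; concretely, since $\Gamma_1$ and $\Gamma_2$ partition the living vertices and every edge of $\Lambda$ crosses between them, I can set $a_v = c_1$ for $v$ in (the contraction of) $\Gamma_1$ and $a_v=c_2$ for $v$ in $\Gamma_2$, choosing $c_1,c_2\in\mathbb{Z}/p$ with $c_1+c_2\equiv 1$, e.g. $c_1=1,c_2=0$. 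One must check this is well-defined on $\Gamma_{>2}^{\mathrm{even}}$: an odd-label path cannot join $\Gamma_1$ to $\Gamma_2$, since odd-label edges are never dead and hence lie inside a single connected component of $\mathrm{Liv}^\chi$; therefore each vertex class of $\Gamma_{>2}^{\mathrm{even}}$ meeting the living vertices lies entirely in $\Gamma_1$ or entirely in $\Gamma_2$, and the labelling is consistent. (Vertices of $\Gamma_{>2}^{\mathrm{even}}$ not meeting $\mathrm{Liv}_0^\chi$ can be labelled arbitrarily, since by the reduction we may assume $\chi(v)\neq 0$ for all $v$, so in fact this case does not arise.) With these choices $\varphi(uv)=g^{c_1+c_2}=g$ for every edge of $\Lambda$, so in particular $\varphi(uv)\in\{g,g^{-1}\}$.

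\textbf{Conclusion and the main obstacle.}
Having produced $\varphi$, Corollary~\ref{CorollaryArtinColor} applies verbatim with $n=1$, $p_1=p$, $g_1=g$, and yields $[\chi]\notin\Sigma^1(A_\Gamma)$; combined with the reduction this proves the $\Sigma^1$-conjecture for $A_\Gamma$. The one genuine subtlety — and where I expect the bookkeeping to concentrate — is verifying that the naive two-valued labelling descends to a well-defined homomorphism of $A_\Gamma$, i.e. that no braid relation of odd length is violated. This is precisely the assertion that odd-label edges never connect the two pieces of a disconnected living subgraph, which holds because an odd-label edge is, by definition of $\mathrm{Liv}^\chi$, always a living edge and hence cannot separate components; once this is in hand the rest is immediate, and notably this family is the special case $n=1$ of the balanced case, so the argument also serves as a warm-up for part~(1) of Theorem~\ref{Theorem 1.1}.
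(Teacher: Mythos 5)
Your proof is correct and follows essentially the same route as the paper: reduce via Proposition~\ref{Proposition 2.9}, use the natural bipartition of $\Lambda$ induced by the split $\Gamma_1\sqcup\Gamma_2$, set $\varphi$ to be $g$ on one side and $1$ on the other, and invoke Corollary~\ref{CorollaryArtinColor} with the constant colouring. Your extra verification that no odd-label path crosses from $\Gamma_1$ to $\Gamma_2$ (so that the labelling descends to $\Gamma_{>2}^{\mathrm{even}}$) is precisely what justifies the paper's terser claim that $\Lambda$ is bipartite, and is a worthwhile detail to spell out.
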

\begin{proof}
	Assume that we are in the conditions of Corollary \ref{CorollaryArtinColor}. Since $\Lambda$ has no cycles of odd length it is bipartite. Write  $\Lambda=\Lambda_1\sqcup\Lambda_2$. By hypothesis $\Lambda$ admits a  $1$-balanced colouring $\mathcal{C}$ with associated prime number $p$. Define $\varphi:A_\Gamma\to C_p$ as $\varphi(v)=g$ if $v\in V(\Lambda_1)$ and $\varphi(v)=1$ otherwise. Then, $\varphi(uv)=g$ for every $e=\lbrace u,v\rbrace\in E(\Lambda)$ and so, by Corollary \ref{CorollaryArtinColor} $[\chi]\notin\Sigma^1(A_\Gamma)$. Proposition \ref{Proposition 2.9} proves the $\Sigma^1$-conjecture for $A_\Gamma$.
\end{proof}
This result was already proved in \cite[Theorem 1.3]{Escartin} by Escartín-Martínez.
\begin{theorem}\label{Theorem 4.8} The $\Sigma^1$-conjecture is true for balanced Artin groups.
\end{theorem}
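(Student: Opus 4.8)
The plan is to deduce Theorem~\ref{Theorem 4.8} from Corollary~\ref{CorollaryArtinColor} together with the parity machinery of Lemma~\ref{ParityLemma}. By Proposition~\ref{Proposition 2.9} it is enough to fix a discrete character $\chi\colon A_\Gamma\to\mathbb{Z}$ with $\mathrm{Liv}^\chi_0$ connected and $\mathrm{Liv}^\chi$ disconnected, choose a partition $\mathrm{Liv}^\chi=\Gamma_1\sqcup\Gamma_2$ into non-empty unions of connected components, form the subgraph $\Lambda\subset\Gamma_{>2}^{\mathrm{even}}$ spanned by the edges joining $\Gamma_1\cap\mathrm{Liv}_0^\chi$ to $\Gamma_2\cap\mathrm{Liv}_0^\chi$, and produce a homomorphism $\varphi\colon A_\Gamma\to C_{p_1}\times\cdots\times C_{p_n}$ with $\varphi(uv)\in\{g_{\mathcal{C}(e)},g_{\mathcal{C}(e)}^{-1}\}$ for every edge $e=\{u,v\}\in E(\Lambda)$; Corollary~\ref{CorollaryArtinColor} then yields $[\chi]\notin\Sigma^1(A_\Gamma)$, which is exactly what Proposition~\ref{Proposition 2.9} requires.

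First I would record two structural facts about $\Lambda$. Any odd-labelled edge is living, so its endpoints lie in the same connected component of $\mathrm{Liv}^\chi$; consequently every vertex of $\Gamma_{>2}^{\mathrm{even}}$ contained in $\mathrm{Liv}_0^\chi$ lies entirely in $\Gamma_1$ or entirely in $\Gamma_2$, and therefore $\Lambda$ is bipartite with parts the $\Gamma_1$-classes and the $\Gamma_2$-classes; in particular $\Lambda$ has no closed path of odd length. Since $A_\Gamma$ is balanced, $\Gamma_{>2}^{\mathrm{even}}$ carries an $n$-balanced colouring $\mathcal{C}$ for which every closed path of even length meets each colour in exactly two or zero edges, and restricting $\mathcal{C}$ to the subgraph $\Lambda$ preserves both the $n$-balanced property and the two-or-zero property. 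Hence Lemma~\ref{ParityLemma} applies to $\Lambda$ and supplies a sign function $\mathrm{p}\colon E(\Lambda)\to\{1,-1\}$ with $\mathrm{parity}(e_1,e_2)=\mathrm{p}(e_1)\mathrm{p}(e_2)$ for all equivalent same-colour edges $e_1,e_2$.

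The core of the proof is the construction of $\varphi$. As the target group is abelian, $\varphi$ is the same data as a function on $V(\Gamma_{>2}^{\mathrm{even}})$ (the abelianization of $A_\Gamma$ is free abelian on these vertices, all relations coming from odd edges), so it suffices, for each colour $i$, to build $f_i\colon V(\Lambda)\to\mathbb{Z}/p_i$ with $f_i(\bar u)+f_i(\bar v)=\mathrm{p}(e)$ when $\mathcal{C}(e)=i$ and $f_i(\bar u)+f_i(\bar v)=0$ when $\mathcal{C}(e)\neq i$, extended by $0$ off $\Lambda$; then $\varphi(uv)=\mathrm{p}(e)\,g_{\mathcal{C}(e)}\in\{g_{\mathcal{C}(e)},g_{\mathcal{C}(e)}^{-1}\}$ as required. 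Since $\Lambda$ is bipartite, flipping the sign of $f_i$ on one part converts this into a difference system $h_i(\bar u)-h_i(\bar v)=\mathrm{p}(e)$ (resp.\ $0$) along edges oriented from one part to the other, which is solvable on each connected component of $\Lambda$ exactly when the flux, i.e.\ the signed sum of the right-hand sides, vanishes modulo $p_i$ around every cycle of some cycle basis consisting of simple cycles. Each such cycle is even, so the balanced hypothesis forces it to carry exactly zero or two colour-$i$ edges $e_1,e_2$: when there are none the flux is $0$; when there are two and $p_i=2$ the flux $\pm\mathrm{p}(e_1)\pm\mathrm{p}(e_2)\equiv 0$ automatically; and when $p_i$ is odd the flux vanishes iff $\mathrm{p}(e_1)\mathrm{p}(e_2)$ equals the sign $(-1)^{l}$ coming from the $l$ edges separating $e_1$ from $e_2$ along the cycle, i.e.\ iff $\mathrm{p}(e_1)\mathrm{p}(e_2)=\mathrm{parity}(e_1,e_2)$, which is precisely the output of Lemma~\ref{ParityLemma}. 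This produces all the $f_i$, hence $\varphi$; each edge of $\Lambda$ is dead so $\chi(uv)=0$ and $\varphi|_{\ker\chi}$ is onto, and Corollary~\ref{CorollaryArtinColor} finishes the argument, Proposition~\ref{Proposition 2.9} upgrading it to the $\Sigma^1$-conjecture for $A_\Gamma$.

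I expect the main obstacle to be exactly the heart of the previous paragraph: carefully matching the combinatorial sign produced by traversing a cycle with the parity $(-1)^l$, and verifying that the two colour-$i$ edges appearing in such a cycle lie in the same equivalence class, so that Lemma~\ref{ParityLemma} may legitimately be invoked — this is where the precise definitions of the equivalence relation and of $\mathrm{parity}$, together with the choice of a suitable cycle basis, must be used. Everything else — the reduction to discrete characters with disconnected living subgraph via Proposition~\ref{Proposition 2.9}, the bipartiteness of $\Lambda$, the passage of the balanced colouring to $\Lambda$, and the final appeal to Corollary~\ref{CorollaryArtinColor} — is routine.
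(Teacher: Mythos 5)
Your proof is correct and is essentially the paper's argument: both reduce via Proposition~\ref{Proposition 2.9} and Corollary~\ref{CorollaryArtinColor} to constructing a homomorphism $\varphi$ with the prescribed values $g_{\mathcal{C}(e)}^{\pm1}$ on the edges of $\Lambda$, and both derive its existence from the parity function of Lemma~\ref{ParityLemma} applied around simple cycles. The only distinction is presentational — the paper defines $\varphi$ by walking a rooted spanning tree of $\Lambda$ and then verifies the required identity on the remaining edges, whereas you phrase that same verification (colour by colour) as the vanishing of a flux around a cycle basis of fundamental cycles; the two formulations are dual descriptions of the identical cocycle computation.
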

\begin{proof}
	Let $A_\Gamma$ be a balanced Artin group and assume that we are in the conditions of Corollary \ref{CorollaryArtinColor}. Since $A_\Gamma$ is balanced $\Lambda$ is balanced, so it admits an $n$-balanced colouring $\mathcal{C}$ with associated primes $p_1,\dots,p_n$ together with a parity map $\mathrm{p}:E(\Lambda)\to\lbrace 1,-1\rbrace$ from Lemma \ref{ParityLemma}. Set $G=C_{p_1}\times\dots\times C_{p_n}$ and  write $g_i$ for the standard generator of $C_{p_i}$. We now define $\varphi:A_\Gamma\to G$. If $v\notin V(\Lambda)$ set $\varphi(v)=1$. To define $\varphi$ on the vertices of $\Lambda$ choose a rooted spanning tree $T\subset\Lambda$ with root $v$ and set $\varphi(v)=1$. Now, for each edge $e=\lbrace u,w\rbrace\in E(\Lambda)$ we declare $\varphi(uw)=g_{\mathcal{C}(e)}^{\mathrm{p}(e)}$. By induction on the depth of $T$ one can easily compute the value of $\varphi(u)$ for every $u\in V(\Lambda)$. We claim that $\varphi(uw)=g_{\mathcal{C}(e)}^{\mathrm{p}(e)}$ for every $e=\lbrace u,w\rbrace\in E(\Lambda\setminus T)$.
	
	Let $e=\lbrace u,w\rbrace$ be an edge of $\Lambda\setminus T$ and form the graph $T_1=T\cup\lbrace e\rbrace$. Since $T$ is a tree, $T_1$ contains exactly one simple cycle $\mathcal{P}$. Label its edges as $e_1,\dots,e_{r}$ in order by walking from $u$ to $w$. It is straightforward to check that:
	$$\varphi(uw)=\prod_{i=1}^{r}g_{\mathcal{C}(e_i)}^{(-1)^{i-1}\mathrm{p}(e_i)}$$
	Because $A_\Gamma$ is balanced each colour appears either $2$ or $0$ times in $\lbrace e,e_1,\dots,e_r\rbrace$. Let $k\in\lbrace 1,\dots,r\rbrace$ be such that $\mathcal{C}(e_k)=\mathcal{C}(e)$. The parity map ensure that pair of edges of the same colour give rise to factors that pairwise cancel in the above formula. Hence, $\varphi(uw)=g_{\mathcal{C}(e_k)}^{(-1)^{k-1}\mathrm{p}(e_k)}$. The number of edges between $e$ and  $e_k$ in $\mathcal{P}$ equals to $k-1$. Therefore:
	$$(-1)^{k-1}\mathrm{p}(e_k)=\mathrm{parity}(e,e_k)\mathrm{p}(e_k)=\mathrm{p}(e)\mathrm{p}(e_k)^2=\mathrm{p}(e)$$
	Thus, $\varphi(uw)=g_{\mathcal{C}(e)}^{\mathrm{p}(e)}$ and we are in the situation of Corollary \ref{CorollaryArtinColor}. Hence, Proposition \ref{Proposition 2.9} proves the $\Sigma^1$-conjecture for $A_\Gamma$.
\end{proof}
As a corollary we obtain a new proof of the $\Sigma^1$-conjectures for the family considered in  \cite{Kochloukova} as well as for the family of coherent Artin groups.
\begin{corollary}
	The $\Sigma^1$-conjecture holds for any Artin group $A_\Gamma$ in either of the following two cases:
	\begin{enumerate}
	\item $\Gamma$ is even and every closed reduced path whose labels exceed $2$ has odd length;
	\item $A_\Gamma$ is coherent.
	\end{enumerate}
\end{corollary}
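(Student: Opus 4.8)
The plan is to deduce the Corollary from Theorem~\ref{Theorem 4.8} by showing that \emph{each} of the two families consists of balanced Artin groups; equivalently, that in each case the graph $\Gamma^{\mathrm{even}}_{>2}$ is balanced. In fact I will argue that $\Gamma^{\mathrm{even}}_{>2}$ is a forest in both cases, and a forest whose edges all carry even labels $\geq 4$ is trivially balanced: it has no cycles, so the condition imposed on the colouring is vacuous, while an $n$-balanced colouring always exists --- for instance, give each edge its own colour, which is legitimate since $\frac{l(e)}{2}\geq 2$ has a prime divisor for every edge $e$.

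For the first family, $\Gamma$ is even, so there are no odd labels to collapse and $\Gamma^{\mathrm{even}}_{>2}$ is just $\Gamma$ with its label-$2$ edges deleted. If $\Gamma^{\mathrm{even}}_{>2}$ contained a cycle $C$, then going around $C$ twice would be a closed reduced path all of whose edge labels exceed $2$ and whose length $2\abs{C}$ is even, contradicting the hypothesis. Hence $\Gamma^{\mathrm{even}}_{>2}$ is a forest, $A_\Gamma$ is balanced, and Theorem~\ref{Theorem 4.8} gives the $\Sigma^1$-conjecture.

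For the coherent case I would start from the Gordon--Wise characterisation (Theorem~\ref{Theorem 2.1}), using only that $\Gamma$ is chordal and that every triangle of $\Gamma$ carries at most one edge of label $>2$. Let $\Gamma_{>2}\subseteq\Gamma$ be the subgraph spanned by the edges of label $\geq 3$; the claim is that $\Gamma_{>2}$ is a forest. Suppose it has a cycle and take one, $C$, of minimal length. It is not a triangle (three edges of label $>2$ would violate Gordon--Wise), so $\abs{C}\geq 4$, and minimality makes $C$ chordless in $\Gamma_{>2}$; chordality of $\Gamma$ then provides a chord $f$ with $l(f)=2$, which cuts $C$ into a strictly shorter cycle with exactly one label-$2$ edge and all remaining edges of label $>2$. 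So it suffices to exclude such ``almost monochromatic'' cycles: take one of minimal length; it is again not a triangle (a triangle with two edges of label $>2$ contradicts Gordon--Wise), so it has a chord in $\Gamma$, and whatever label that chord has, exactly one of the two arcs it determines, together with the chord, is once more an almost monochromatic cycle and is strictly shorter --- a contradiction. Therefore $\Gamma_{>2}$ is a forest, and $\Gamma^{\mathrm{even}}_{>2}$, obtained from it by contracting the odd-labelled edges, is a forest as well: in a forest two edge-disjoint paths with the same endpoints would already close up a cycle, so no loops or parallel edges can appear after the contraction. Thus $A_\Gamma$ is balanced and Theorem~\ref{Theorem 4.8} applies once more.

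The step I expect to be the real obstacle is the combinatorial induction in the coherent case: one must keep in mind that the chords furnished by chordality of $\Gamma$ may themselves have label $2$, and set up the bookkeeping so that the invariant ``exactly one edge of label $2$'' survives each time a cycle is cut along a chord, and then check that contracting the odd edges cannot re-create a cycle in $\Gamma^{\mathrm{even}}_{>2}$. A lesser point is to make sure the notion of ``closed reduced path'' occurring in the hypothesis of the first family is exactly the notion of closed path entering the definition of a balanced graph, so that the reduction to ``$\Gamma^{\mathrm{even}}_{>2}$ is a forest'' is fully justified.
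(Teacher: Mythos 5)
Your proposal is correct and uses the paper's approach: reduce to Theorem~\ref{Theorem 4.8} by showing $\Gamma^{\mathrm{even}}_{>2}$ is (trivially) balanced because it has no problematic cycles. In case 1 you derive the slightly stronger conclusion that $\Gamma^{\mathrm{even}}_{>2}$ is a forest (via the cycle-doubling trick), while the paper only notes that the hypothesis rules out \emph{even-length} cycles, which already makes the balanced condition vacuous; either suffices. In case 2 your two-stage chord-cutting induction on ``almost monochromatic'' cycles works, but the paper reaches the conclusion in one step by lifting a cycle of $\Gamma^{\mathrm{even}}_{>2}$ to a closed reduced path with all labels $\geq 3$ in $\Gamma$ and invoking the ``ear'' property of chordal graphs --- every cycle of length $\geq 4$ has three consecutive vertices forming a triangle --- which Gordon--Wise immediately forbids. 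That ear argument is the shortcut your induction is working around; noticing it would have collapsed your second stage entirely. Your final observation that contracting a forest yields a forest is sound and fills the gap the paper leaves implicit.
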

\begin{proof} In case $1.$ $\Gamma_{>2}^{\mathrm{even}}$ contains no even-length cycles by hypothesis, so is trivially balanced.
	
In case $2.$ any cycle in $\Gamma_{>2}^{\mathrm{even}}$ would lift to a closed reduced path with all labels $\geq 3$ in $\Gamma$. Since $\Gamma$ is chordal there must be three consecutive vertices of the path which form a triangle, but at most one of those edges has a label $> 2$, which is a contradiction. Hence $\Gamma_{>2}^{\mathrm{even}}$ has no closed loops, so is trivially balanced.
\end{proof}
\section{The $\Sigma^2$-conjecture for Artin groups}
In this section, we investigate the converse of Theorem \ref{Theorem 1.2 }. Specifically, we aim to show that under certain conditions, the three conditions stated in that theorem are not only sufficient but also necessary for a character $[\chi]$ to lie in $\Sigma^2(A_\Gamma)$. As a consequence, we establish the $\Sigma^2$-conjecture for the classes of 2-dimensional and coherent Artin groups.
\subsection{Necessity of condition 1}
Fix an Artin group $A_\Gamma$ satisfying the $K(\pi,1)$-conjecture and a character $\chi:A_\Gamma\to\mathbb{Z}$. We begin by proving a technical lemma about cyclotomic polynomials:
\begin{lemma}\label{Lemma 4.5}
	Let $\Phi_n(x)$ denote the $n$-th cyclotomic polynomial. If $p$ is a prime number such that $p\equiv 1\mod n$ then $\Phi_n(x)$ has a root in the finite field $\mathbb{F}_p$. In particular, for every $n$ there exists some prime number $p$ such that $\Phi_n(x)$ has a root in $\mathbb{F}_p$. 
\end{lemma}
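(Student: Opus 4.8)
The plan is to exploit the cyclicity of $\mathbb{F}_p^\times$ together with the factorization $x^n-1=\prod_{d\mid n}\Phi_d(x)$ in $\mathbb{Z}[x]$. First I would produce a primitive $n$-th root of unity in $\mathbb{F}_p$: since $p$ is prime, $\mathbb{F}_p^\times$ is cyclic of order $p-1$, and the hypothesis $p\equiv 1\pmod n$ gives $n\mid p-1$, so $\mathbb{F}_p^\times$ contains an element $\zeta$ of order exactly $n$ (take $g^{(p-1)/n}$ for a generator $g$). Note that $p\equiv 1\pmod n$ also forces $p>n$, hence $p\nmid n$, so the reduction of $x^n-1$ modulo $p$ is separable, although this is not strictly needed for the argument.

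Next I would identify $\zeta$ as a root of $\Phi_n$ modulo $p$. Reducing the identity $x^n-1=\prod_{d\mid n}\Phi_d(x)$ modulo $p$ and evaluating at $\zeta$ (which satisfies $\zeta^n=1$ but $\zeta^d\neq 1$ for every $d<n$), we find that $\zeta$ is a root of $\overline{\Phi_d}$ for some divisor $d$ of $n$; if $d<n$, then from $\Phi_d(x)\mid x^d-1$ we would get $\zeta^d=1$, contradicting that $\zeta$ has order $n$. Hence $d=n$, and $\Phi_n$ has the root $\zeta$ in $\mathbb{F}_p$.

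For the final assertion I would invoke Dirichlet's theorem on primes in arithmetic progressions to obtain a prime $p$ with $p\equiv 1\pmod n$ and then apply the first part (alternatively one can avoid Dirichlet via the classical observation that any prime divisor of $\Phi_n(a)$ not dividing $n$ is automatically congruent to $1$ modulo $n$, applied to growing integer values of $a$). I do not anticipate a genuine obstacle here; the only delicate point is checking that a primitive $n$-th root of unity modulo $p$ lands on the factor $\Phi_n$ rather than on some $\Phi_d$ with $d\mid n$ and $d<n$, which is exactly the order computation carried out above.
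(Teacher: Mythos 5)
Your argument is correct and follows essentially the same route as the paper: produce a primitive $n$-th root of unity $\zeta=g^{(p-1)/n}$ in $\mathbb{F}_p^\times$ using cyclicity and the hypothesis $n\mid p-1$, observe that $\Phi_n(\zeta)=0$, and invoke Dirichlet for the final assertion. The only difference is that you spell out, via the factorization $x^n-1=\prod_{d\mid n}\Phi_d(x)$ and an order argument, why $\zeta$ must be a root of $\Phi_n$ rather than of a lower $\Phi_d$, a step the paper's proof leaves implicit.
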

\begin{proof}
	Let $w$ be a generator of the multiplicative group $\mathbb{F}_p^\times$, which is cyclic of order $p-1$. Since $p\equiv 1\mod n$ we can set $\xi=w^{\frac{p-1}{n}}$. By construction $\xi$ is a primitive $n$-th root of unity, so $\Phi_n(\xi)=0$. The existence of a prime $p\equiv 1\mod n$ follows from Dirichlet's Theorem on arithmetic progressions.
\end{proof}
\begin{proposition}
	Let $[\chi]\in\Sigma^2(A_\Gamma,\mathbb{Z})$ and let $e=\lbrace v,w\rbrace\in E(\Gamma)$ be an edge such that either $\chi(v)=\chi(w)=0$ or $e$ is dead. Then, there exists a vertex $u\in V(\Gamma)$ such that $A_{v,w,u}$ is spherical. Moreover, if the label $l(e)$ is even, then $u$ can be chosen to be a living vertex.
\end{proposition}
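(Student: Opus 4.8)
The plan is to prove the contrapositive: if \emph{no} vertex $u$ makes $A_{\{v,w,u\}}$ spherical (with the living refinement when $l(e)$ is even), then we will construct an epimorphism $\psi:A_\Gamma\to\mathbb{Z}\times G$ extending $\chi$ for a suitable finite abelian group $G$ and a field $\mathbb{F}$ such that $\dim_\mathbb{F}H_2(\Ker(\psi),\mathbb{F})=\infty$, which by Lemma \ref{Lemma 3.2} forces $[\chi]\notin\Sigma^2(A_\Gamma,\mathbb{Z})$. (For $n=2$ the $K(\pi,1)$-assumption is not needed by the Remark following Lemma \ref{Lemma 3.2}, since the $3$-skeleton of the Salvetti complex is not required; but we may invoke it freely as it is part of the standing hypotheses.) The key observation is that the hypothesis on $e$ means the cell $\sigma_e^\psi$ (resp.\ $\sigma_e^\chi$) will contribute a torsion, non-exact $2$-cycle, and the failure of any triangle $\{v,w,u\}$ to be spherical means there are \emph{no} $3$-cells of $\mathrm{Sal}(\Gamma)$ with $e$ in their boundary image that could kill this cycle. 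So $H_2$ acquires an infinitely generated $\mathbb{F}$-subspace.

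\textbf{Step 1: reduce to a local picture.} Using Proposition \ref{Proposition 1.6}.1, delete all vertices not in $\{v,w\}$ on which $\chi$ vanishes is \emph{not} quite what we want here since we need $u$ for the $3$-cells; instead, I would keep the whole graph but note that spherical triangles containing $e$ are the only $3$-cells whose boundary involves $\sigma_e$. Concretely, restrict attention to $\partial_3$ restricted to the span of the cells $\sigma_X^\psi$ with $e\subset X$ and $|V(X)|=3$, i.e.\ $X=\{v,w,u\}$ for $u\in\lk_\Gamma(v)\cap\lk_\Gamma(w)$; by hypothesis each such $X$ is non-spherical, hence $\sigma_X$ is \emph{not a cell of} $\mathrm{Sal}(\Gamma)$. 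Thus in the normalized chain complex $\widetilde C_\ast(\mathrm{Sal}_\Gamma^\psi)$ the chain $\widetilde\sigma_e^\psi$ never appears in $\mathrm{Im}\,\partial_3$, so any $2$-cycle of the form $z\cdot\widetilde\sigma_e^\psi$ (for $z\in(R[G])_S$ killing $\partial_2\widetilde\sigma_e^\psi$) survives to a nonzero class in $H_2$, provided it is genuinely a cycle and genuinely nonzero modulo the rest of $\mathrm{Im}\,\partial_3$.

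\textbf{Step 2: produce the cycle via characters of $G$.} Compute $\partial_2(\sigma_e^\psi)$ from Proposition \ref{FormulasDiferential}: if $l(e)=2k$ is even and $e$ is dead then $\chi(v)+\chi(w)=0$ and the coefficient $\frac{(\varphi(vw)t^{\chi(v)+\chi(w)})^k-1}{\varphi(vw)t^{\chi(v)+\chi(w)}-1}$ becomes $\frac{\varphi(vw)^k-1}{\varphi(vw)-1}=1+\varphi(vw)+\dots+\varphi(vw)^{k-1}$, so $\partial_2\sigma_e^\psi$ is this cyclotomic-type element times $\big[(1-\varphi(v)t^{\chi(v)})\sigma_w^\psi-(1-\varphi(w)t^{\chi(w)})\sigma_v^\psi\big]$; if $l(e)$ is odd the formula is simpler still. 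Choosing $G=C_m$ for a well-chosen $m$ (a multiple of $k$, or handled via Lemma \ref{Lemma 4.5} to get a cyclotomic root in some $\mathbb{F}_p$) and $\varphi(vw)$ a primitive such root, the coefficient $1+\varphi(vw)+\dots+\varphi(vw)^{k-1}$ becomes a \emph{zero divisor} (it vanishes under the character $\mu$ with $\mu(\varphi(vw))$ the chosen root), exactly as in the proof of Theorem \ref{Corollary 4.7}. Pick $z\in(R[G])_S$ so that multiplication by $z$ lands in the idempotent component where $\partial_2\sigma_e^\psi$ dies; then $z\,\widetilde\sigma_e^\psi\in\Ker\partial_2$. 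Because $e$ is in no spherical triangle, $z\,\widetilde\sigma_e^\psi$ is not in $\mathrm{Im}\,\partial_3$ (the other generators $\widetilde\sigma_X^\psi$, $|V(X)|=3$, involve only $2$-cells $\sigma_f^\psi$ with $f\neq e$), so it defines a nonzero element of $H_2$. When $l(e)$ is even we also handle the case $\chi(v)=\chi(w)=0$: then $\partial_2\sigma_e^\psi$ carries the factor $(1-\varphi(v))$ or $(1-\varphi(w))$ which can be made a zero divisor by a separate coordinate of $G$, and the living-vertex refinement is obtained by choosing $u$ to be whichever vertex could a priori complete a spherical triangle — under our contrapositive hypothesis no living such $u$ exists, which is exactly what we negate.

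\textbf{Step 3: infinitude.} The action of $R=\mathbb{F}[t^{\pm1}]$ on $H_2(\Ker\psi,\mathbb{F})$ shows that the submodule generated by $[z\,\widetilde\sigma_e^\psi]$ is, after localizing at $S$, a nonzero $(R[G])_S$-module, hence infinite-dimensional over $\mathbb{F}$; by Lemma \ref{Lemma 4.1} this means $H_2(\Ker\psi,\mathbb{F})$ has infinite $\mathbb{F}$-dimension. Applying Lemma \ref{Lemma 3.2} gives $[\chi]\notin\Sigma^2(A_\Gamma,\mathbb{Z})$, completing the contrapositive. \textbf{The main obstacle} I anticipate is Step 2: verifying that the chosen cycle $z\,\widetilde\sigma_e^\psi$ is not only killed by $\partial_2$ but also genuinely nonzero in the quotient by $\mathrm{Im}\,\partial_3$ — i.e.\ that no combination of the \emph{higher} cells (if any spherical $X\supsetneq e$ with $|V(X)|\ge 3$ exist that are not triangles, or triangles of type $\mathbb{B}_3$ that do occur) can absorb it. This requires a careful bookkeeping of exactly which spherical $X$ contain $e$, which is where the precise statement "either $\chi(v)=\chi(w)=0$ or $e$ is dead, and the even/living dichotomy" does the work, mirroring the role of $3$-dead edges of type $\mathbb{B}_3$ in Theorem \ref{Theorem 1.2 }.
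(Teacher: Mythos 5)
Your overall strategy matches the paper's: argue the contrapositive, produce a $2$-cycle supported on $\sigma_e$, observe it cannot be killed by $\partial_3$, and conclude via Lemma \ref{Lemma 3.2} (or Proposition \ref{Proposition 2.4}) that $H_2$ of the relevant kernel is infinite-dimensional over a suitable field. The odd case in the paper proceeds exactly as you sketch (take $p\mid l(e)$, choose $q$ with $\Phi_{2p}$ having a root in $\mathbb{F}_q$ via Lemma \ref{Lemma 4.5}, set $\varphi(v)$ to be such a root so that $\partial(\sigma_e^\psi)=0$); your write-up is non-rigorous there and misstates the relevant coefficient as $1+\varphi(vw)+\dots+\varphi(vw)^{k-1}$, which is the even/dead formula, not the odd one (Formula III gives $\frac{\varphi(v)^{l(e)}+1}{\varphi(v)+1}$), but the idea is recoverable.

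The genuine gap is in the even case, and it is exactly the obstacle you flag at the end but do not resolve. The contrapositive of the ``moreover'' clause is not ``no spherical triangle contains $e$'' but ``no \emph{living} $u$ completes $e$ to a spherical triangle''. So under your hypothesis there may very well be spherical triangles $\{v,w,u_i\}$ with $u_i$ dead, and then $\sigma_{vwu_i}$ \emph{is} a $3$-cell of the Salvetti complex. Your claim ``Because $e$ is in no spherical triangle, $z\,\widetilde\sigma_e^\psi$ is not in $\mathrm{Im}\,\partial_3$ (the other generators \dots involve only $2$-cells $\sigma_f^\psi$ with $f\neq e$)'' is therefore false as stated: those generators do involve $\sigma_e$. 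What actually saves the argument — and what the paper proves — is that over a field of characteristic dividing $\tfrac{l(e)}{2}$ (or over any field when $l(e)=2$), once the completing vertex $u_i$ is dead the coefficient of $\sigma_{vw}^\chi$ in $\partial(\sigma_{vwu_i}^\chi)$ vanishes (e.g.\ in Formula IV the relevant factor is $(1-t^{\chi(u_i)})=0$), so these $3$-cells do not hit $\sigma_{vw}^\chi$. You need to carry out this computation explicitly, using Formulas \ref{Formula2}--\ref{Formula5}, rather than appeal to the (false) absence of spherical triangles. Note also that the paper's even case works with $\chi$ alone and a field of appropriate characteristic, without introducing the finite extension $\psi$; your $\psi$-based localization machinery is unnecessary overhead there, though not wrong in principle.
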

\begin{proof}
	Suppose, by contradiction, that no vertex $u$ makes $A_{v,w,u}$ spherical and assume first that $l(e)$ is odd. Let $p$ be a prime number dividing $l(e)$ and choose a prime number $q$ such that the cyclotomic polynomial $\Phi_{2p}$ has a root in  $\mathbb{F}_q$, which exists due to Lemma \ref{Lemma  4.5}. Define a map $\varphi:A_\Gamma\to \mathbb{F}_q^{\times}$ as follows: set $\varphi(v)$ to be a root of $\Phi_{2p}$, extend $\varphi$ to all vertices connected to $v$ via paths of odd-length by $\varphi(a)=\varphi(v)$ and set $\varphi(b)=1$ elsewhere.

	Now, define the map  $\psi=(\chi,\varphi):A_\Gamma\to \mathbb{Z}\times G$. A direct computation using Formula \ref{Formula3} shows that $\partial(\sigma_e^\psi)=\Phi_{2p}(\varphi(v))[\sigma_w^\psi-\sigma_v^\psi]=0$.
	Moreover, this $2$-cycle cannot lie in  $\mathrm{Im}(\partial_3)$, so it contributes a free $\mathbb{F}[\mathbb{Z}\times G]$-summand to $H_2(\ker(\psi),\mathbb{F})$, which is therefore infinitely generated. Thus $[\chi]\notin\Sigma^2(A_\Gamma,\mathbb{Z})$ by Proposition \ref{Lemma 3.2}.

	Now consider the case where $l(e)$ is even. Suppose by contradiction, that every vertex $u\in V(\Gamma)$ forming a spherical triple $\lbrace u,v,w\rbrace$ is dead. Let $u_1,\dots, u_l\in V(\Gamma)$ be all such vertices (possibly none).	By Formulas \ref{Formula2}--\ref{Formula5}, the 2-cycles $\sigma^\chi_{vw}$ and $\sigma^\chi_{vwu_i}$ lie in $\Ker(\partial)$ over any field if $l(e)=2$ and over fields over characteristic dividing $\frac{l(e)}{2}$ if $l(e)>2$. In either case, these cycles generate and infinitely generated free summand in $H_2(\ker(\chi),\mathbb{F})$, again implying that $[\chi]\notin\Sigma^2(A_\Gamma)$ by Proposition \ref{Lemma 3.2}.
\end{proof}
\begin{corollary}\label{Corollary 5.6}
	Condition 1. is necessary for $[\chi]$ to lie in $\Sigma^2(A_\Gamma)$ in the case where $A_\Gamma$ satisfies the $K(\pi,1)$-conjecture and $\Gamma$ has no triangles of the form $\lbrace 2,3,3\rbrace,\lbrace 2,3,4\rbrace,\lbrace 2,3,5\rbrace$ or $\lbrace 2,2,k\rbrace$ with $k$ odd.
\end{corollary}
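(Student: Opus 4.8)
The plan is to derive the Corollary directly from the preceding Proposition together with the classification of spherical triangular Artin groups recalled in Section \ref{Section 2.1}; no new machinery is needed. Recall that Condition 1 of Theorem \ref{Theorem 1.2 } asks, for every edge $e=\{v,w\}$ with $\chi(v)=\chi(w)=0$ or $e$ dead, for a \emph{living} vertex $u$ with $A_{\{v,w,u\}}$ spherical, whereas the preceding Proposition only supplies some such $u$ and guarantees it to be living when $l(e)$ is even. Thus the only point left to handle is the odd-label case, and the goal is to show that under the triangle hypothesis this case does not occur at all.

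Concretely, I would fix $[\chi]\in\Sigma^2(A_\Gamma,\mathbb{Z})$ — this suffices since $\Sigma^2(A_\Gamma)\subseteq\Sigma^2(A_\Gamma,\mathbb{Z})$ — and an edge $e=\{v,w\}$ with $\chi(v)=\chi(w)=0$ or $e$ dead. If $l(e)$ is even, the Proposition hands us a living $u$ with $A_{\{v,w,u\}}$ spherical, which is exactly Condition 1 for this edge. If $l(e)$ is odd, then $e$ cannot be dead (deadness forces an even label $\geq 4$), so $\chi(v)=\chi(w)=0$; the Proposition still produces some vertex $u$ making $T:=A_{\{v,w,u\}}$ spherical. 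At this point I invoke the classification: the spherical triangular Artin groups are precisely $A_{22P}$ with $2\leq P<\infty$ and $A_{23P}$ with $P\in\{3,4,5\}$. Since $\{v,w\}$ is an edge of $T$ carrying the odd label $l(e)\geq 3$, the triangle $T$ must be of type $\{2,2,k\}$ with $k=l(e)$ odd, or one of $\{2,3,3\},\{2,3,4\},\{2,3,5\}$ — each of which is excluded by hypothesis. This contradiction shows that no edge with $\chi(v)=\chi(w)=0$ has odd label, so Condition 1 is vacuously satisfied for odd-labelled edges, and combining with the even case yields Condition 1 in full.

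I do not expect a genuine obstacle: the content is already packaged in the Proposition and the classification, and the only care required is the bookkeeping that matches the parity of $l(e)$ against the edge of $T$ that carries that label, so that exactly the listed triangle types are the ones that must be ruled out. The one small point worth stating explicitly is that an edge can be dead only when its label is even, which is what eliminates the dead-edge subcase once $l(e)$ is assumed odd.
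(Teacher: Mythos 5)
Your proof is correct and fills in the implicit argument behind the paper's corollary in the same way the paper intends: the preceding proposition already gives a living $u$ when $l(e)$ is even, and for odd $l(e)$ the proposition forces a spherical triangle containing an odd-labelled edge $\geq 3$, which by the classification of spherical triangular Artin groups must be of type $\{2,2,k\}$ ($k$ odd) or $\{2,3,P\}$ with $P\in\{3,4,5\}$, all excluded by hypothesis.
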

Even Artin groups satisfying the $K(\pi,1)$-conjecture and $2$-dimensional Artin groups, which satisfy the $K(\pi,1)$-conjecture (c.f. \cite{Charney-Davis}), are two relevant families satisfying the Corollary. 
\subsection{Necessity of condition 2}
Let $A_\Gamma$ be an arbitrary Artin group satisfying the $K(\pi,1)$-conjecture and let $\chi:A_\Gamma\to\mathbb{Z}$ be a character. The non-empty part follows trivially:
\begin{lemma}\label{Lemma  4.2}
	If $[\chi]\in\Sigma^2(A_\Gamma,\mathbb{Z})$ (or $[\chi]\in\Sigma^2(A_\Gamma)$) then for any dead vertex $v\in V(\Gamma)$ the spherical link $\mathrm{slk}_{\mathrm{Liv}^\chi,v}^\chi$ is non-empty.
\end{lemma}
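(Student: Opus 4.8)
The strategy is to push the problem down to the $\Sigma^1$-level, where domination is already known. First I would record the standard comparisons among $\Sigma$-invariants recalled in the preliminaries: for any module $A$ one has $\Sigma^2(G,A)\subseteq\Sigma^1(G,A)$, while for an Artin group $\Sigma^1(A_\Gamma,\mathbb{Z})=\Sigma^1(A_\Gamma)$ and $\Sigma^2(A_\Gamma)\subseteq\Sigma^2(A_\Gamma,\mathbb{Z})$. Hence both hypotheses, $[\chi]\in\Sigma^2(A_\Gamma,\mathbb{Z})$ and $[\chi]\in\Sigma^2(A_\Gamma)$, imply $[\chi]\in\Sigma^1(A_\Gamma)$.

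Next I would invoke Proposition \ref{Proposition 1.5}(2) (Meier): $[\chi]\in\Sigma^1(A_\Gamma)$ forces $\mathrm{Liv}^\chi$ to be dominant, i.e. every dead vertex $v\in V(\Gamma)$ is adjacent in $\Gamma$ to some living vertex $w$. This $w$ is what will produce a cell of the spherical link complex.

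Finally I would unwind the definition of $\mathrm{slk}_{\mathrm{Liv}^\chi,v}^\chi$. Since $w$ is living it is a vertex of $\mathrm{Liv}^\chi$, and it is adjacent to $v$ in $\Gamma$, so $w\in V(\lk_{\mathrm{Liv}^\chi}(v))$; thus $\{w\}$ is a $1$-clique of $\lk_{\mathrm{Liv}^\chi}(v)$. Both $\{w\}$ and $\{v,w\}$ are spherical subgraphs of $\Gamma$: a single vertex spans a copy of $\mathbb{Z}$ (Coxeter group $\mathbb{Z}/2\mathbb{Z}$), and a single edge with finite label $l(\{v,w\})$ spans the dihedral Artin group of type $\mathbb{I}_2(l(\{v,w\}))$, whose Coxeter group is finite. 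Therefore $\{w\}$ contributes a $0$-cell to $\mathrm{slk}_{\mathrm{Liv}^\chi,v}^\chi$, which is consequently non-empty.

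I do not expect a genuine obstacle here; the only point requiring care is the bookkeeping between the combinatorial definitions, namely checking that the passage from $\mathrm{Liv}^\chi_0$ to $\mathrm{Liv}^\chi$ cannot destroy this $0$-cell. That is automatic: the cell only involves the living vertices $v,w$ and the spherical subgraphs they span, and in any case the edge $\{v,w\}$ is living because $v$ is dead, so $\chi(v)+\chi(w)=\chi(w)\neq 0$ and no edge-deletion applies to it.
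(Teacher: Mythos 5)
Your proof is correct and follows essentially the same route as the paper: pass from $\Sigma^2$ to $\Sigma^1$ via the containment chain, apply Meier's result (Proposition~\ref{Proposition 1.5}.2) to get domination of $\mathrm{Liv}^\chi$, and observe that an adjacent living vertex produces a $0$-cell of $\mathrm{slk}_{\mathrm{Liv}^\chi,v}^\chi$. The paper states the last step in one line, while you unwind the definition to verify that $\{w\}$ and $\{v,w\}$ are indeed spherical; that extra bookkeeping is harmless and correct (minor slip aside: in your closing remark $v$ is of course the dead vertex, not a living one, but the point being made there is moot anyway since sphericality is a property of subgraphs of $\Gamma$, unaffected by edge deletions in $\mathrm{Liv}^\chi$).
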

\begin{proof}
	 Since $[\chi]\in\left(\Sigma^2(A_\Gamma,\mathbb{Z})\cup\Sigma^2(A_\Gamma)\right)\subset\Sigma^1(A_\Gamma)$ Proposition \ref{Proposition 1.5}.2 implies that $\text{Liv}^\chi$ is dominant in $\Gamma$. In particular, every dead vertex is adjacent to at least one living vertex, hence $\mathrm{slk}_{\mathrm{Liv}^\chi,v}^\chi\neq\emptyset$.
\end{proof}
While the connectedness of spherical links cannot be established in general, we can prove it under an additional condition on the edge labels:
\begin{proposition} Let $[\chi]\in\Sigma^2(A_\Gamma,\mathbb{Z})$. Assume that $v\in  V(\Gamma)$ is a dead vertex such that there exists a prime number $p$ dividing $\frac{l(e)}{2}$ for every dead edge $e\in E(\mathrm{slk}_{\mathrm{Liv}^\chi,v}^\chi)$. Then, the spherical link $\mathrm{slk}_{\mathrm{Liv}^\chi,v}^\chi$ is connected.
\end{proposition}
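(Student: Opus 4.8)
I would prove the contrapositive: if $L:=\mathrm{slk}^\chi_{\mathrm{Liv}^\chi,v}$ is disconnected, then $[\chi]\notin\Sigma^2(A_\Gamma,\mathbb Z)$. One may assume $[\chi]\in\Sigma^1(A_\Gamma)$ — otherwise there is nothing to prove — so by Proposition \ref{Proposition 1.5} and Lemma \ref{Lemma  4.2} the complex $L$ is non-empty and $\mathrm{Liv}^\chi_0$ is connected; fix vertices $w_1,w_2$ in distinct connected components of $L$. The guiding idea is that disconnectedness of the spherical link at $v$ should obstruct the $FP_2$-property of $\Ker(\chi)$ in the same way that a disconnected $\mathrm{Liv}^\chi$ obstructs $FP_1$ in the $\Sigma^1$ theory, i.e.\ it should force an infinite-dimensional $H_2$. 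What makes this tractable is that $v$ is a dead vertex, so $v\in\Ker(\chi)$, and hence $v$ itself can be used to feed a non-trivial finite abelian quotient into the chain complex of Section \ref{Section 3}, exactly as in the proof of Theorem \ref{Corollary 4.7}, only one homological degree higher.

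Concretely, using the prime $p$ of the hypothesis together with Lemma \ref{Lemma 4.5} and Dirichlet's theorem, I would pick a prime $q\equiv 1\pmod p$ larger than every edge label of $\Gamma$, set $\mathbb F=\mathbb F_q$ (so that $\mathbb F$ contains a primitive $p$-th root of unity and every integer smaller than $q$ is invertible in $\mathbb F$), and choose a homomorphism $\varphi\colon A_\Gamma\to C_p$ with $\varphi(v)$ a fixed generator and $\varphi$ chosen on the remaining generators so that every dead edge $e=\{u_1,u_2\}$ of $L$, of label $2k$, satisfies that $\varphi(u_1u_2)$ is a non-trivial $k$-th root of unity in $\mathbb F$ — possible because $p\mid k$ for all such $e$, by the same colouring device as in Theorem \ref{Corollary 4.7} and Corollary \ref{CorollaryArtinColor}. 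Then $\psi=(\chi,\varphi)\colon A_\Gamma\to\mathbb Z\times C_p$ is an epimorphism extending $\chi$, and for each dead edge of $L$ the cyclotomic sum $1+\varphi(u_1u_2)+\cdots+\varphi(u_1u_2)^{k-1}$ vanishes in $\mathbb F[C_p]$.

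It then remains to show $\dim_{\mathbb F}H_2(\Ker(\psi),\mathbb F)=\infty$ and invoke Lemma \ref{Lemma 3.2}. I would work in the chain complex $C_*(\widetilde{\mathrm{Sal}(\Gamma)})_\psi$ of Section \ref{Section 3}, localized at $S$ and normalized on every cell that avoids $v$. The cells through $v$ form a quotient complex $Q_*$ with generators $\sigma^\psi_{\{v\}\cup Y}$ indexed, with a degree shift of $2$, by the cells $Y$ of $L$; unwinding the differentials of Proposition \ref{FormulasDiferential} under the substitution $t^{\chi(\cdot)}\mapsto t^{\chi(\cdot)}\varphi(\cdot)$ identifies $Q_*$, after normalization, with a double suspension of the augmented simplicial chain complex of $L$ over $(R[C_p])_S$, in which each dead edge $e=\{u_1,u_2\}$ of $L$ carries the coefficient $1+\varphi(u_1u_2)+\cdots+\varphi(u_1u_2)^{l(e)/2-1}=0$. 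Hence the dead edges of $L$ contribute nothing to $Q_3\to Q_2$, and since $L$ is disconnected, $\widetilde H_0(L)\neq 0$ yields a non-zero free $(R[C_p])_S$-summand in $H_2(Q_*)$. To transport this to the full complex I would avoid the long exact sequence of $0\to C_*(\widetilde{\mathrm{Sal}(\Gamma\setminus v)})_\psi\to C_*(\widetilde{\mathrm{Sal}(\Gamma)})_\psi\to Q_*\to 0$ (which would require controlling $H_1$ of the $v$-free subcomplex, essentially $\Sigma^1$ for $A_{\Gamma\setminus v}$) and instead pull the generating classes back to explicit $2$-cycles of the shape $d_2\,\sigma^\psi_{\{v,w_1\}}-d_1\,\sigma^\psi_{\{v,w_2\}}\pm(\text{a normalized living edge-path }w_1\leadsto w_2)$, where $d_i$ is the coefficient of $\sigma^\psi_v$ in $\partial\sigma^\psi_{\{v,w_i\}}$; these map onto a generator of $H_2(Q_*)$, hence are not boundaries, and translating by powers of $t$ produces infinitely many $\mathbb F$-independent classes. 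By Lemma \ref{Lemma 3.2}, $[\chi]\notin\Sigma^2(A_\Gamma,\mathbb Z)$.

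The main obstacle is the chain-level identification of $Q_*$ with the suspended simplicial complex of $L$: this is exactly where the divisibility hypothesis on $p$ is indispensable, since it is what annihilates the contribution of the dead edges of $L$ — equivalently, of the spherical triangles through $v$, in particular those of type $\mathbb B_3$ with a $3$-dead edge — to the differential. A secondary source of friction is that $\sigma^\psi_v$ is not normalizable (because $\chi(v)=0$), so both the telescoping of the ``loop through $v$'' cycles and the verification that the path contributions have invertible torsion coefficients over $(R[C_p])_S$ must be carried out by hand; the choice $q>\max_e l(e)$ is made precisely to guarantee that invertibility.
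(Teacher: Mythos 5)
Your strategy is substantially more elaborate than the paper's, which never leaves the plain $\chi$-cover: the paper simply works in $C_\ast(\widetilde{\mathrm{Sal}(\Gamma)})_\chi$ over a field $\mathbb F$ of characteristic $p$. The point of the divisibility hypothesis is then much more direct than you make it. For a dead edge $e=\{u',w'\}$ of $L$ (necessarily belonging to a $\{2,2,l(e)\}$ triangle with $v$ opposite, since $\chi(v)=0$ forces all edges from $v$ into $L$ to be even), Formula \ref{Formula4} with $\chi(v)=0$ and $\chi(u')+\chi(w')=0$ collapses to $\partial(\sigma_{vu'w'}^\chi)=k\,[(1-t^{\chi(w')})\sigma_{vu'}^\chi-(1-t^{\chi(u')})\sigma_{vw'}^\chi]$ with $k=l(e)/2$, and since $p\mid k$ and $\mathrm{char}\,\mathbb F=p$ this is already zero. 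No finite quotient $\psi=(\chi,\varphi)$, no localization at $S$, no normalization, and no Lemma \ref{Lemma 3.2} are required; one invokes Proposition \ref{Proposition 2.4} directly and exhibits the explicit $2$-cycle $p_u(t)\sigma_{uv}^\chi-p_w(t)\sigma_{wv}^\chi$ for $u,w$ in distinct components of $L$, which cannot be in $\operatorname{Im}\partial_3$ precisely because there is no chain of spherical triangles through $v$ joining the two components. The divisibility hypothesis is not there to \emph{engineer} a root of unity; it is there so that the integer coefficient $k$ produced by the formal expression $\tfrac{(t^0)^k-1}{t^0-1}$ vanishes in characteristic $p$.

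Your route, importing the $\psi$-cover machinery from Section \ref{Section 7}, is not wrong in spirit but it carries two real liabilities that the paper's proof avoids. First, setting $\varphi(v)$ to a generator of $C_p$ makes the first term $(1-\varphi(v))\sigma^\psi_{u'w'}$ in $\partial(\sigma^\psi_{vu'w'})$ nonzero, so the quotient complex $Q_\ast$ does \emph{not} identify cleanly with a suspension of $\widetilde C_\ast(L)$ unless you silently discard that term; when you then ``pull back to the full complex'' these discarded $v$-free terms return and must be cancelled, which is exactly the bookkeeping you were hoping to skip by avoiding the long exact sequence. Second, the claim that a homomorphism $\varphi\colon A_\Gamma\to C_p$ exists making $\varphi(u_1u_2)$ a nontrivial $p$-th root of unity \emph{simultaneously} on every dead edge of $L$ is not automatic: it requires the dead edges of $L$ to carry a consistent $2$-colouring compatible with the Artin relations (odd edges of $\Gamma$ force equality of $\varphi$-values), and the ``same colouring device as Corollary \ref{CorollaryArtinColor}'' only applies to balanced configurations, which you have not verified. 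The characteristic-$p$ argument sidesteps both issues entirely, which is precisely why the paper does not use $\psi$ in this proof.
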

\begin{proof}
	Let $\mathbb{F}$ be a field of characteristic $p$ and assume by contradiction that $\mathrm{slk}_{\mathrm{Liv}^\chi,v}^\chi$ is not connected. Let $\Delta_1$ and $\Delta_2$ be two connected components of $\mathrm{slk}_{\mathrm{Liv}^\chi,v}^\chi$ and fix $u\in V(\Delta_1)$ and $w\in  V(\Delta_2)$. From Formulas \ref{Formula2} and \ref{Formula3}, for any $u,w\in V(\mathrm{slk}_{\mathrm{Liv}^\chi,v}^\chi)$ there exist non-zero Laurent polynomials $p_u(t),p_w(t)\in\mathbb{F}[t^{\pm1}]$ such that $p_u(t)\sigma_{uv}^\chi-p_w(t)\sigma_{wv}^\chi\in\ker(\partial_2)$.
	 
	However, since $\Delta_1$ and $\Delta_2$ are disconnected, there is no triple  $\lbrace u',w',v\rbrace$ with $u'\in V(\Delta_1),v'\in V(\Delta_2)$ for which $A_{u',w',v}$ is spherical. Thus, the difference $\sigma_{uv}^\chi-\sigma_{wv}^\chi$ cannot appear in the image of $\partial_3$. As a result, the cycle $p_u(t)\sigma_{uv}^\chi-p_w(t)\sigma_{wv}^\chi\in\ker(\partial_2)$ survives in the homology and generates an infinite-dimensional summand in $H_2(A_\Gamma,\mathbb{F})$, contradicting the assumption that $[\chi]\in\Sigma^2(A_\Gamma,\mathbb{Z})$ by Proposition  \ref{Proposition 2.4}.
\end{proof}
\begin{corollary}\label{2-dimensional.2}
	Condition $2.$ is a necessary condition for the $\Sigma^2$-conjecture in:
	\begin{enumerate}
		\item Artin groups satisfying the $K(\pi,1)$-conjecture for which there exists a prime number $p$ dividing $\frac{l(e)}{2}$ for every edge $e\in E(\Gamma)$ with $l(e)>2$ even.
		\item $2$-dimensional Artin groups.
	\end{enumerate} 
\end{corollary}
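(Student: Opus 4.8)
The plan is to obtain Corollary~\ref{2-dimensional.2} as a direct consequence of Lemma~\ref{Lemma  4.2} together with the Proposition immediately above, by checking that the hypothesis of that Proposition is automatically satisfied at every dead vertex in each of the two listed families. Since $\Sigma^2(A_\Gamma)\subseteq\Sigma^2(A_\Gamma,\mathbb{Z})$, it is enough to assume $[\chi]\in\Sigma^2(A_\Gamma,\mathbb{Z})$ and to prove that, for every dead vertex $v\in V(\Gamma)$, the complex $\mathrm{slk}_{\mathrm{Liv}^\chi,v}^\chi$ is non-empty and connected. Non-emptiness is exactly Lemma~\ref{Lemma  4.2}, so the whole issue reduces to connectedness.

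For family~$1$, the global hypothesis supplies a single prime $p$ with $p\mid\frac{l(e)}{2}$ for every $e\in E(\Gamma)$ with $l(e)>2$ even. Every dead edge has even label $\geq 4>2$, so in particular $p\mid\frac{l(e)}{2}$ for every dead edge $e\in E(\mathrm{slk}_{\mathrm{Liv}^\chi,v}^\chi)$, whatever the dead vertex $v$. Hence the hypothesis of the preceding Proposition holds at every dead vertex, and that Proposition yields the connectedness of each $\mathrm{slk}_{\mathrm{Liv}^\chi,v}^\chi$. The $K(\pi,1)$-assumption is what legitimises the appeal to Proposition~\ref{Proposition 2.4} inside that argument (alternatively one invokes the Remark noting that the criterion is already valid for $n=2$ without it, since Artin groups are finitely presented).

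For family~$2$, the key structural remark is that a $2$-dimensional Artin group contains no spherical triangular subgraph, so for any $v$ there is no $2$-clique $X\subseteq\lk_{\mathrm{Liv}^\chi}(v)$ with $X\cup\{v\}$ spherical; consequently $\mathrm{slk}_{\mathrm{Liv}^\chi,v}^\chi$ has no $1$-cells and is simply a discrete set of $0$-cells, one for each living vertex adjacent to $v$. In particular $E(\mathrm{slk}_{\mathrm{Liv}^\chi,v}^\chi)=\emptyset$, so the hypothesis of the preceding Proposition is vacuously satisfied, and that Proposition forces the discrete set $\mathrm{slk}_{\mathrm{Liv}^\chi,v}^\chi$ to be connected, i.e.\ to consist of a single vertex. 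This recovers condition~$2$ of Theorem~\ref{Theorem 1.4}: every dead vertex is adjacent to a unique living vertex. That $2$-dimensional Artin groups satisfy the $K(\pi,1)$-conjecture \cite{Charney-Davis} makes the uses of Lemma~\ref{Lemma  4.2} and of the preceding Proposition legitimate in this case.

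I do not expect a genuine obstacle here: the mathematical content lies entirely in Lemma~\ref{Lemma  4.2} and the Proposition on connectedness of spherical links, and the corollary is only the observation that their hypotheses become automatic --- vacuously so in the $2$-dimensional case. The one point that deserves a sentence of care is the description of the spherical link complex when $\Gamma$ is $2$-dimensional, namely that it is $0$-dimensional, so that ``connected'' collapses to ``a single vertex.''
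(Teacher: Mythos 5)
Your argument is correct and is essentially the paper's own: the Corollary is stated without an explicit proof because it is a direct consequence of Lemma~\ref{Lemma  4.2} (non-emptiness) together with the Proposition immediately preceding it (connectedness under the prime-divisibility hypothesis), and you supply exactly the verification that the Proposition's hypothesis holds in each of the two families --- globally by assumption in family 1, and vacuously in family 2 because the absence of spherical triangles makes $\mathrm{slk}_{\mathrm{Liv}^\chi,v}^\chi$ zero-dimensional. Your observation that ``connected'' collapses to ``a single vertex'' in the $2$-dimensional case, recovering condition~2 of Theorem~\ref{Theorem 1.4}, is a useful remark, as is the reduction to the homological invariant via $\Sigma^2(A_\Gamma)\subseteq\Sigma^2(A_\Gamma,\mathbb{Z})$ and the note that $2$-dimensional Artin groups satisfy the $K(\pi,1)$-conjecture by Charney--Davis.
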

The first part of the Corollary recovers a known result, since the $\Sigma^2$-conjecture is known for this family (see Escartín–Martínez \cite{Escartin}, Theorem 7.4).
\subsection{The $2$-dimensional case}
Since $2$-dimensional Artin groups have cohomological dimensional $2$, the BNSR invariants stabilize:
 $$\Sigma^n(A_\Gamma)=\Sigma^m(A_\Gamma)~\forall~n,m\geq 2$$ 
 
Therefore, to prove Theorem \ref{Theorem 1.4} it suffices to compute $\Sigma^2$.
\begin{proof}(Theorem \ref{Theorem 1.4}) 
	By Corollaries \ref{Corollary 5.6} and \ref{2-dimensional.2} Conditions 1. and 2. are necessary. It remains to prove condition $3$. Since $A_\Gamma$ is $2$-dimensional, $\partial_n=0$ for all $n\geq 3$ and thus $H_2(A_\Gamma,\mathbb{F})=\ker(\partial_2)$ for any field $\mathbb{F}$. In particular, $\dim_\mathbb{F}H_2(A_\Gamma,\mathbb{F})=\infty$ if and only if $\ker(\partial_2)\neq0$.
	
Suppose, by way of contradiction, that $\mathrm{Liv}^\chi$ is not a tree. Then, it contains a non-trivial reduced cycle. Let  $\mathcal{P}$ be a closed reduced path in $\Gamma$, consisting of edges $e_1,\dots,e_n$ and vertices $v_1,\dots,v_n$, where $e_i=\{v_i,v_{i+1}\}$ and $v_{n+1}=v_1$. By Formulas \ref{Formula2} and \ref{Formula3}, for each $i$ there exists $p_i,q_i\in\mathbb{F}[t^{\pm1}]\setminus\lbrace0\rbrace$ such that $\partial(\sigma_{v_i,v_{i+1}}^\chi)=p_{i+1}(t)\sigma_{v_{i+1}}^\chi-q_i(t)\sigma_{v_i}^\chi$. We aim to show that there exist coefficients $c_1,\dots,c_n\in\mathbb{F}[t^{\pm1}]\setminus\lbrace0\rbrace$ such that $\sum\limits_{i=1}^nc_i\sigma_{v_iv_{i+1}}\in\ker(\partial_2)$.
\begin{claim*}
Such coefficients $c_i$ exist if and only if  $\prod\limits_{i=1}^np_i(t)=\prod\limits_{j=1}^nq_j(t)$.
\end{claim*}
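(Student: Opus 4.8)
## Proof Proposal

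The plan is to analyze the linear system that the condition $\sum_{i=1}^n c_i \sigma_{v_iv_{i+1}}^\chi \in \ker(\partial_2)$ imposes on the unknowns $c_1,\dots,c_n \in \mathbb{F}[t^{\pm1}]\setminus\{0\}$. Applying $\partial_2$ termwise using the formula $\partial(\sigma_{v_iv_{i+1}}^\chi) = p_{i+1}(t)\sigma_{v_{i+1}}^\chi - q_i(t)\sigma_{v_i}^\chi$, the coefficient of each $\sigma_{v_j}^\chi$ in $\partial_2\big(\sum_i c_i\sigma_{v_iv_{i+1}}^\chi\big)$ collects exactly two contributions (since $\mathcal{P}$ is a reduced cycle, each vertex $v_j$ is shared by precisely the two consecutive edges $e_{j-1}$ and $e_j$): the term $c_{j-1}p_j(t)$ coming from $\partial(\sigma_{v_{j-1}v_j}^\chi)$ and the term $-c_j q_j(t)$ coming from $\partial(\sigma_{v_jv_{j+1}}^\chi)$. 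Hence the cycle condition is equivalent to the cyclic system $c_{j-1}p_j(t) = c_j q_j(t)$ for all $j \in \mathbb{Z}/n\mathbb{Z}$.

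First I would solve this system formally in the field of fractions $\mathbb{F}(t)$. Setting $c_1$ to be any nonzero value, the relations force $c_j = c_{j-1} p_j(t)/q_j(t)$, so that $c_j = c_1 \prod_{k=2}^{j} p_k(t)/q_k(t)$ for each $j$. Going all the way around the cycle, consistency at index $1$ (i.e. $c_n p_1(t) = c_1 q_1(t)$) forces $c_1 \cdot \prod_{k=1}^n p_k(t)/q_k(t) = c_1$, which — since all $p_k, q_k$ are nonzero and $c_1 \neq 0$ — is equivalent to $\prod_{k=1}^n p_k(t) = \prod_{k=1}^n q_k(t)$. This proves the ``only if'' direction directly: if a nonzero solution $(c_i)$ exists over $\mathbb{F}[t^{\pm1}]$, then in particular it exists over $\mathbb{F}(t)$ and the product identity must hold.

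For the ``if'' direction, suppose $\prod_{k=1}^n p_k(t) = \prod_{k=1}^n q_k(t)$. Then the recursion produces a well-defined tuple $(c_1,\dots,c_n)$ in $\mathbb{F}(t)^n$ with every $c_j \neq 0$ and satisfying all $n$ equations. The remaining issue is to clear denominators: multiplying every $c_j$ by a common nonzero element $d(t) \in \mathbb{F}[t^{\pm1}]$ (for instance the product of all denominators appearing, or more simply $\prod_k q_k(t)$) yields a tuple in $\mathbb{F}[t^{\pm1}]\setminus\{0\}$ still satisfying the homogeneous relations $c_{j-1}p_j = c_j q_j$, hence still defining an element of $\ker(\partial_2)$. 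I should note that the $p_i, q_i$ from Formulas \ref{Formula2} and \ref{Formula3} are genuinely nonzero Laurent polynomials in $\mathbb{F}[t^{\pm1}]$ (they are geometric-type sums of the form $\frac{t^{m\chi(v)}-1}{t^{\chi(v)}-1}$ or $\frac{t^{m\chi(v)}+1}{t^{\chi(v)}+1}$, nonzero because $\chi(v)\neq 0$ and the exponents are positive), so the division steps are legitimate in $\mathbb{F}(t)$.

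The main obstacle — really the only subtle point — is bookkeeping the indices correctly in the cyclic system and making sure no vertex of $\mathcal{P}$ receives contributions from more than two edges; this is exactly where the hypothesis that $\mathcal{P}$ is a \emph{reduced} closed path is used (a reduced cycle visits consecutive distinct edges so each vertex lies on exactly two of them, and there is no cancellation-by-backtracking). Everything else is elementary linear algebra over the PID $\mathbb{F}[t^{\pm1}]$, or rather over its fraction field followed by clearing denominators. The product identity $\prod p_i = \prod q_i$ is then precisely the obstruction to the telescoping consistency around the loop.
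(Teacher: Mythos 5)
Your proof is correct and follows the same approach as the paper: apply $\partial_2$ termwise, collect the coefficient of each $\sigma_{v_j}^\chi$, and observe that the resulting cyclic linear system $c_{j-1}p_j = c_j q_j$ has a nonzero solution precisely when the telescoping product around the loop closes up, i.e. $\prod p_i = \prod q_i$. The paper simply asserts the equivalence between solvability of the system and the product identity, whereas you spell out the telescoping recursion over $\mathbb{F}(t)$ and the denominator-clearing step needed to land in $\mathbb{F}[t^{\pm1}]\setminus\{0\}$ — a genuine (if minor) gap in the paper's exposition that your write-up fills.
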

\begin{proof}
	Observe that $\partial_2\left(\sum\limits_{i=1}^nc_i\sigma_{v_iv_{i+1}}\right)=(c_np_1(t)-c_1q_1(t))\sigma_{v_1}^\chi+\sum\limits_{i=2}^n(c_{i-1}p_i(t)-c_iq_i(t))\sigma_{v_i}=0$. This vanishes if and only if the following system of equations hold:
	$$c_np_1(t)-c_1q_1(t)=0\text{ and }c_{i-1}p_i(t)-c_iq_i(t)=0~\forall~i=2,\dots,n$$
	
	This system of equation has a solution if and only if  $\prod\limits_{i=1}^np_i(t)=\prod\limits_{j=1}^nq_j(t)$. Hence, the claim follows.
	\phantom{\qedhere}
\end{proof}	
	It remains to verify this identity. From Formulas \ref{Formula2} and \ref{Formula3}, we know that if $l(e_i)$ is odd then $p_{i+1}(t)=q_i(t)$ and if $l(e_i)$ is even there exists $r_i\in\mathbb{F}[t^{\pm1}]$ such that $p_{i+1}(t)=(1-t^{\chi(v_i)})r_i(t),q_{i}(t)=(1-t^{\chi(v_{i+1})})r_i(t)$. Hence:
	
	$$\prod\limits_{i=1}^np_i(t)=\prod\limits_{l(e_i)~\mathrm{ odd}}p_{i+1}(t)\prod\limits_{l(e_i)~\mathrm{ even}}(1-t^{\chi(v_i)})r_i(t)$$
	$$\prod\limits_{i=1}^nq_i(t)=\prod\limits_{l(e_i)~\mathrm{ odd}}q_{i}(t)\prod\limits_{l(e_i)~\mathrm{ even}}(1-t^{\chi(v_{i+1})})r_i(t)$$
	Now, since the path $\mathcal{P}$ is a closed loop and the character $\chi$ is constant along edges with odd labels, we have
	$$\prod\limits_{l(e_i)~\mathrm{ even}}(1-t^{\chi(v_{i+1})})=\prod\limits_{l(e_i)~\mathrm{ even}}(1-t^{\chi(v_{i})})$$
	so the products match, and hence we are done.
\end{proof}
As a consequence, we can compute the finiteness properties of the derived subgroups of 2-dimensional Artin groups.
\begin{proof}(Corollary \ref{Corollary 1.5})
	If $\Gamma$ is an odd tree it follows by Theorem \ref{Theorem 1.4} that $\Sigma^2(A_\Gamma)=S(A_\Gamma)$. Assume now that $\Gamma$ is not an odd tree. There are two possibilities:
	\begin{itemize}
		\item If $A_\Gamma/A_\Gamma'\cong\mathbb{Z}$: $S(A_\Gamma)=\lbrace[\chi],[-\chi]\rbrace$, where $\chi:A_\Gamma\to\mathbb{Z}$ is the character sending all standard generators to $1$. 	For both characters we have $\textrm{Liv}^\chi=\Gamma$.
		But then $\Gamma$ is not a tree, because if it is a tree and the abelianization has rank $1$ then all the labels must be odd. Hence, $\Sigma^2(A_\Gamma)=\emptyset$.
		\item If $A_\Gamma/A_\Gamma'\ncong\mathbb{Z}$: in this case there must exists an edge $e=\lbrace u,v\rbrace\in E(\Gamma)$ such that the images of $u$ and $v$ in the abelianization are not equal. This implies that $l(e)$ must be even. Now, for any character $\chi:A_\Gamma\to\mathbb{Z}$ such that $\chi(u)=-\chi(v)$ the edge $e$ is $2$-dead  and so $[\chi]\notin\Sigma^2(A_\Gamma)$, by Theorem \ref{Theorem 1.4}.\qedhere
	\end{itemize}
\end{proof}
\subsection{Coherent Artin groups}
If $A_\Gamma$ is a coherent  Artin group then $\Sigma^1(A_\Gamma)=\Sigma^2(A_\Gamma,\mathbb{Z})$. Thus, since coherent Artin groups satisfy the $\Sigma^1$-conjecture, to prove the $\Sigma^2$-conjecture it suffices to show that $\chi:A_\Gamma\to\mathbb{Z}$ satisfies the conditions of Theorem \ref{Theorem 1.2 } if and only if $\mathrm{Liv}^\chi$ is connected and dominant.

One direction of this equivalence holds for all Artin groups.
\begin{lemma}
	Let $A_\Gamma$ be an Artin group and $\chi:A_\Gamma\to\mathbb{Z}$ a character. If $\chi$ satisfies the conditions of Theorem \ref{Theorem 1.2 }, then $\mathrm{Liv}^\chi$ is connected and dominant.
\end{lemma}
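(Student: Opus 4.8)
The plan is to show the contrapositive at the level of the three conditions: if $\mathrm{Liv}^\chi$ is disconnected or fails to be dominant, then one of the three conditions of Theorem~\ref{Theorem 1.2 } fails. First I would handle dominance. By Proposition~\ref{Proposition 1.5}.2, whenever $[\chi]\in\Sigma^1(A_\Gamma)$ the graph $\mathrm{Liv}^\chi$ is dominant; but more directly, suppose some dead vertex $v$ is not adjacent to any living vertex. Then $\mathrm{slk}_{\mathrm{Liv}^\chi,v}^\chi$ is empty, since every clique $X\subseteq\mathrm{lk}_{\mathrm{Liv}^\chi}(v)$ would have to consist of living vertices adjacent to $v$, of which there are none. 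This directly contradicts condition $2.$ of Theorem~\ref{Theorem 1.2 }. So from now on I may assume $\mathrm{Liv}^\chi$ is dominant.

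Next I would treat connectedness. Suppose $\mathrm{Liv}^\chi$ is disconnected, with connected components partitioned into two non-empty unions $\Lambda_1$ and $\Lambda_2$ with $\Lambda_1 \cup \Lambda_2 = \mathrm{Liv}^\chi$ and $\Lambda_1 \cap \Lambda_2 = \emptyset$. Since $\mathrm{Liv}^\chi$ is dominant and $\Gamma$ is connected (we may assume this, otherwise there is a direct obstruction), there is at least one edge of $\Gamma$ joining $V(\Lambda_1)$ to $V(\Lambda_2)$, and every such edge $e=\{v,w\}$ must be dead or have both endpoints dead --- indeed, if $e$ were a living edge with at least one living endpoint, it would be an edge of $\mathrm{Liv}^\chi$ connecting the two sides. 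The key point is then that the simplicial complex $Y$ built from $\mathrm{Liv}^\chi$ in condition $3.$ by attaching $2$-cells along spherical triangles (other than the excluded $\mathbb{B}_3$ type) cannot connect $\Lambda_1$ to $\Lambda_2$: attaching a $2$-cell along a spherical triangle $\{a,b,c\}\subseteq\mathrm{Liv}^\chi$ only adds faces among vertices already lying in a single component of $\mathrm{Liv}^\chi$, since any spherical triangle is in particular connected as a subgraph of $\mathrm{Liv}^\chi$ and hence lies entirely in $\Lambda_1$ or entirely in $\Lambda_2$. Therefore $Y$ is disconnected, so $H_0(Y)\ne\mathbb{Z}$ and $Y$ is neither $1$-acyclic nor simply connected, contradicting condition $3.$

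The remaining subtlety is bookkeeping about which vertices appear in $Y$. The complex in condition~$3.$ is obtained "from $\mathrm{Liv}^\chi$" by attaching $2$-cells, so its vertex set is exactly $V(\mathrm{Liv}^\chi) = V(\Gamma)$ minus dead vertices that got deleted --- but wait, $\mathrm{Liv}^\chi$ as defined still contains all dead vertices (only vertices with $\chi=0$ are deleted to form $\mathrm{Liv}^\chi_0$, and then dead edges are removed; dead vertices \emph{are} the ones deleted). So $V(\mathrm{Liv}^\chi)$ consists precisely of the living vertices. I should double-check that a dead vertex cannot secretly reconnect the two sides through the $2$-cells: a spherical triangle containing a dead vertex contributes a $2$-cell, but that $2$-cell is attached along the boundary triangle, and by condition $1.$ (which we are \emph{not} assuming here) such configurations are constrained --- however for the connectedness argument it is enough that the $1$-skeleton of $Y$ is $\mathrm{Liv}^\chi$ together with possibly the extra edges of spherical triangles, and every spherical triangle with a living edge to both $\Lambda_1$ and $\Lambda_2$ would already force a living path between them. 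I expect this vertex/edge bookkeeping --- pinning down exactly the $1$-skeleton of the complex in condition~$3.$ and confirming no spherical triangle straddles the partition --- to be the only real obstacle; the rest is a routine connectivity argument. This completes the sketch.
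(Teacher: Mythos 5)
Your argument is correct and follows the same route as the paper: condition~$2$ forces every dead vertex to have a nonempty spherical link inside $\mathrm{Liv}^\chi$, which is exactly dominance, and condition~$3$ requires a complex built on top of $\mathrm{Liv}^\chi$ to be $1$-acyclic (in particular connected), which forces $\mathrm{Liv}^\chi$ itself to be connected. The ``bookkeeping'' concern you flag at the end is a non-issue: whatever the precise recipe for attaching $2$-cells, an attaching map $S^1 \to Y^{(1)}$ always has image in a single path component, so attaching cells of dimension $\geq 2$ can never merge components of the $1$-skeleton $\mathrm{Liv}^\chi$. In particular you do not need to rule out spherical triangles straddling the partition; even if such a triangle existed, it could not contribute a $2$-cell with boundary meeting both sides, since the boundary loop lives in a single component of $\mathrm{Liv}^\chi$. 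With that observation the sketch closes and agrees with the paper's (very short) proof.
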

\begin{proof}
	If $\mathrm{Liv}^\chi$ is connected, then condition $3.$ of Theorem \ref{Theorem 1.2 } does not hold, as the associated simplicial complex would not be connected. Moreover, since $\mathrm{slk}_{\mathrm{Liv}^\chi,v}^\chi\neq\emptyset$ for every dead vertex $v\in V(\Gamma)$, it follows that $\mathrm{Liv}^\chi$ is dominant.
\end{proof}
To prove the converse, we need the coherence assumption. We start with a technical Lemma.
\begin{lemma}\label{Lemma 5.8} Let $A_\Gamma$ be a coherent Artin group and let $\chi:A_\Gamma\to\mathbb{Z}$ be a character such that $\mathrm{Liv}^\chi$ is connected. If $v\in V(\Gamma)$ is a dead vertex with $\mathrm{slk}_{\mathrm{Liv}^\chi,v}^\chi\neq\emptyset$ and if $u,w\in V(\mathrm{slk}_{\mathrm{Liv}^\chi,v}^\chi)$ lie in different connected components, then $\lbrace u,w\rbrace\notin E(\Gamma)$.
\end{lemma}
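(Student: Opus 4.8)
The plan is to argue by contradiction: assume $\{u,w\}\in E(\Gamma)$ and produce a path from $u$ to $w$ inside $\mathrm{slk}_{\mathrm{Liv}^\chi,v}^\chi$. Two consequences of coherence (Theorem~\ref{Theorem 2.1}) will be used repeatedly. First, by part (2) every triangle of $\Gamma$ carries at most one label $>2$, so every triangle of $\Gamma$ is of type $A_{22k}$ with $2\le k<\infty$ and hence spherical; consequently, any $2$-clique $\{a,b\}$ of $\mathrm{lk}_{\mathrm{Liv}^\chi}(v)$ is automatically a $1$-cell of $\mathrm{slk}_{\mathrm{Liv}^\chi,v}^\chi$, since both $\{a,b\}$ and $\{a,b,v\}$ are then spherical. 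In particular, if $\{u,w\}$ were a living edge it would join $u$ to $w$ in $\mathrm{slk}_{\mathrm{Liv}^\chi,v}^\chi$, against the hypothesis; so $\{u,w\}$ is a dead edge, $l(\{u,w\})\ge 4$ is even and $\chi(u)+\chi(w)=0$, and, again by part (2), the edges $\{u,v\}$ and $\{v,w\}$ carry the label $2$.

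Since $\mathrm{Liv}^\chi$ is connected and $\{u,w\}$ is not one of its edges, there is a shortest (hence simple) path $u=x_0,x_1,\dots,x_k=w$ inside $\mathrm{Liv}^\chi$ with $k\ge 2$; all the $x_i$ are living. Now I use chordality of $\Gamma$ (Theorem~\ref{Theorem 2.1}(1)) in the form: in a chordal graph, if two adjacent vertices $a,b$ are joined by a simple path of length $\ge 2$, then some interior vertex of that path is adjacent in the graph to both $a$ and $b$. (One triangulates the simple cycle formed by the path together with the edge $\{a,b\}$, using only graph edges as chords — possible by repeatedly splitting at a chord — and takes the third vertex of the triangle containing $\{a,b\}$; when the path has length $2$ the path itself is that triangle.) Applying this to $u$, $w$ and the path $x_0,\dots,x_k$ yields an index $0<i<k$ such that $z:=x_i$ is adjacent in $\Gamma$ to both $u$ and $w$; note $z$ is living.

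It remains to place $z$ inside the spherical link. In the triangle $\{u,z,w\}$ the edge $\{u,w\}$ has label $\ge 4$, so Theorem~\ref{Theorem 2.1}(2) forces $l(\{u,z\})=l(\{z,w\})=2$, whence $\{u,z\}$ and $\{z,w\}$ are living edges. Moreover $\{v,z\}\in E(\Gamma)$: otherwise the induced subgraph on the four distinct vertices $\{u,v,w,z\}$ is $K_4$ minus the edge $\{v,z\}$, with all labels $2$ except $l(\{u,w\})>2$, and since $\{u,w\}$ joins the two degree-$3$ vertices this is precisely the forbidden configuration of Theorem~\ref{Theorem 2.1}(3), contradicting coherence. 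Thus $z$ is a living vertex adjacent to $v$, so $z\in\mathrm{lk}_{\mathrm{Liv}^\chi}(v)$, and $\{u,z\}$, $\{z,w\}$ are $2$-cliques of $\mathrm{lk}_{\mathrm{Liv}^\chi}(v)$, hence $1$-cells of $\mathrm{slk}_{\mathrm{Liv}^\chi,v}^\chi$ by the first paragraph. They exhibit a path from $u$ to $w$ in $\mathrm{slk}_{\mathrm{Liv}^\chi,v}^\chi$, contradicting that $u$ and $w$ lie in different components; therefore $\{u,w\}\notin E(\Gamma)$.

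The step that needs the most care is the chordal triangulation lemma and, especially, checking that it applies: the path $x_0,\dots,x_k$ is shortest only in $\mathrm{Liv}^\chi$, not in $\Gamma$, so the cycle it spans together with $\{u,w\}$ may have chords that are dead edges of $\Gamma$ — this is harmless because the triangulation uses only adjacency in $\Gamma$, but it should be stated explicitly, and the degenerate case $k=2$ handled separately. Everything else reduces to direct bookkeeping with the Gordon–Wise criterion.
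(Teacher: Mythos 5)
Your proof is correct and follows essentially the same route as the paper's: use coherence to pin down the labels on the triangle $\{u,v,w\}$, use connectedness of $\mathrm{Liv}^\chi$ to get a path from $u$ to $w$, use chordality to extract a common neighbour of $u$ and $w$ on that path, and then use the forbidden-subgraph condition of Theorem~\ref{Theorem 2.1}(3) to force that neighbour to be adjacent to $v$, producing a path in $\mathrm{slk}^\chi_{\mathrm{Liv}^\chi,v}$ that contradicts the hypothesis. Your write-up is in fact a bit more careful than the paper's: you verify explicitly that $\{u,w\}$ must be a dead edge rather than merely stating the triangle, you check that $\{u,z\}$ and $\{z,w\}$ carry label $2$ and are therefore living, you verify the sphericality conditions that make these edges genuine $1$-cells of the spherical link, and you flag the degenerate length-$2$ case of the chordality step — all points the paper leaves implicit.
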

\begin{proof}
	Suppose by contradiction that $\lbrace u,w\rbrace\in E(\Gamma)$. Then, the subgraph induced by $\lbrace u,v,w\rbrace$ is equal to:
		$$\begin{tikzpicture}[main/.style = {draw, circle},node distance={15mm}] 
		\node[label=below:{$u$}][main] (1)  {}; 
		\node[label={$v$}][main] (6) [above right of=1] {};
		\node[label=below:{$w$}][main] (2) [right of=1] {}; 
		
		\draw[-] (1) -- node[below] {$\scriptstyle{2m}$}  (2);
		\draw[-] (1) -- node[above left]{$\scriptstyle{2}$}  (6);
		\draw[-] (6) -- node[pos=0.7,above right] {$\scriptstyle{2}$}   (2);
	\end{tikzpicture} $$
	for some $m\geq 2$. Since $\mathrm{Liv}^\chi$ is connected there exists a path $\mathcal{P}$ in $\mathrm{Liv}^\chi$ from $u$ to $w$. Let $\Gamma_1$ be the subgraph induced by $\mathcal{P}\cup\lbrace u,w\rbrace$. As this subgraph contains a cycle and $\Gamma$ is chordal, there must exists a vertex $a\in V(\mathcal{P})$ such that $\lbrace u,w,a\rbrace$ forms a triangle in $\Gamma$. In this situation $a$ and $v$ is connected, since otherwise $\lbrace u,v,w,a\rbrace$ would induce a subgraph which is forbidden by definition of coherent Artin group. Hence, $a$ is connected to both $u$ and $w$ in $\mathrm{slk}_{\mathrm{Liv}^\chi,v}^\chi$ which is a contradiction.
\end{proof}

\begin{proposition}
	Let $A_\Gamma$ be a coherent Artin group and $\chi:A_\Gamma\to\mathbb{Z}$ a character. Then, $\mathrm{Liv}^\chi$ is connected and dominant if and only if $\chi$ satisfies the conditions of Theorem \ref{Theorem 1.2 }
\end{proposition}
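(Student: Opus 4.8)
The plan is to prove the two directions separately. The implication ``$\chi$ satisfies the conditions of Theorem \ref{Theorem 1.2 } $\Rightarrow$ $\mathrm{Liv}^\chi$ connected and dominant'' is exactly the previous Lemma, so nothing more is needed there. The real content is the converse: assuming $A_\Gamma$ coherent and $\mathrm{Liv}^\chi$ connected and dominant, I must verify each of the three conditions of Theorem \ref{Theorem 1.2 }.

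First I would dispose of Condition 1. Suppose $e=\{v,w\}$ is an edge with $\chi(v)=\chi(w)=0$ or with $e$ dead; I want a living $u$ with $A_{\{v,w,u\}}$ spherical. If $e$ is dead then $l(e)$ is even $\geq 4$, but coherence (Theorem \ref{Theorem 2.1}, item 2) forbids two labels $>2$ in a spherical triple, and the only spherical triples containing an edge of even label $\geq 4$ are of type $\mathbb{B}_3=\{2,2,2m\}$ — wait, more carefully, for coherent groups $\Gamma$ is chordal and the spherical triples are exactly $\{2,2,k\}$ with at most one label $>2$; so an edge with even label $\geq 4$ can only sit in a $\{2,2,2m\}$ triangle. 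Thus I first argue that such a ``bad'' edge $e$ forces $\chi(v)=\chi(w)=0$ or $e$ dead to be incompatible with $\mathrm{Liv}^\chi$ connected unless there is a completing vertex, using chordality: since $\mathrm{Liv}^\chi$ is connected there is a living path between the endpoints of $e$ (or from an endpoint to a living vertex dominating the dead endpoint), and chordality produces a chord, eventually yielding a living vertex $u$ adjacent to both $v$ and $w$; the forbidden-subgraph condition (item 3) and the label restriction then force $A_{\{v,w,u\}}$ to be spherical of type $\{2,2,*\}$, hence spherical. This is essentially the argument of Lemma \ref{Lemma 5.8} applied with the edge $e$ playing the role of $\{u,w\}$ there.

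Next, Condition 2: for every dead vertex $v$, the complex $\mathrm{slk}_{\mathrm{Liv}^\chi,v}^\chi$ must be non-empty and connected. Non-emptiness is Lemma \ref{Lemma  4.2}'s content but here follows directly from dominance. For connectedness, suppose $u,w$ lie in distinct components of $\mathrm{slk}_{\mathrm{Liv}^\chi,v}^\chi$. By Lemma \ref{Lemma 5.8}, $\{u,w\}\notin E(\Gamma)$. Since $\mathrm{Liv}^\chi$ is connected, take a reduced living path $\mathcal P$ from $u$ to $w$ in $\mathrm{Liv}^\chi$; among all vertices on $\mathcal P\cup\{v\}$ consider the induced subgraph. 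I would run an induction on the length of $\mathcal P$: the first vertex $a$ on $\mathcal P$ after $u$ is living and adjacent to $u$; if $a$ is adjacent to $v$ then $a\in\mathrm{slk}_{\mathrm{Liv}^\chi,v}^\chi$ in the same component as $u$, and we recurse on the shorter path from $a$ to $w$; if $a$ is not adjacent to $v$, then $\{u,v,a\}$ together with the next step must be controlled by chordality and the forbidden subgraph — here the key point is that coherence's forbidden configuration (item 3) is precisely designed to rule out a dead vertex $v$ adjacent to $u$ but with a living triangle $\{u,a,\dots\}$ bypassing it. Carefully bookkeeping, every such path can be shortcut through the spherical link, contradicting that $u,w$ are in different components.

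Finally, Condition 3: the complex $Y$ obtained from $\mathrm{Liv}^\chi$ by attaching $2$-cells along spherical triangles (excluding $\mathbb{B}_3$ with a $3$-dead edge) must be simply connected. Here I would use coherence decisively: in a coherent Artin group all spherical triangles are of type $\{2,2,k\}$, none is $\mathbb{B}_3$, so $Y$ is $\mathrm{Liv}^\chi$ with a $2$-cell glued on for every triangle of $\Gamma$ contained in $\mathrm{Liv}^\chi$. Since $\mathrm{Liv}^\chi$ is connected, $\pi_1(Y)$ is generated by cycles in $\mathrm{Liv}^\chi$, and I must show every reduced cycle bounds. Take a reduced cycle $\mathcal C$ in $\mathrm{Liv}^\chi$; the induced subgraph on $\mathcal C$ sits inside the chordal graph $\Gamma$, so either $\mathcal C$ is a triangle (and $A_{\mathcal C}$ is spherical of type $\{2,2,k\}$, so the $2$-cell is present and kills it), or $\mathcal C$ has length $\geq 4$ and chordality provides a chord $\{x,y\}$ splitting $\mathcal C$ into two shorter cycles — but I must check this chord lies in $\mathrm{Liv}^\chi$, i.e. is a living edge. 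A chord could a priori be a dead edge; however, a dead edge $\{x,y\}$ has even label $\geq 4$, so by coherence any vertex completing it to a triangle is joined by a $2$-label, and one shows the two sub-triangles thus created are spherical, so the chord, even if dead, can be ``filled around'' via the attached $2$-cells; alternatively one shows directly that in a chordal coherent graph any reduced living cycle of length $\geq 4$ has a living chord. Proceeding by induction on the cycle length then shows $\pi_1(Y)=1$.

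The main obstacle I expect is Condition 3, specifically the possibility of a dead chord in a living cycle: one must argue that even though such a chord is absent from $\mathrm{Liv}^\chi$, the two triangles it spans are nevertheless spherical (not of the excluded $\mathbb{B}_3$ type — automatic under coherence) so their $2$-cells are attached, allowing the homotopy to pass ``over'' the missing edge. Making this precise — that a reduced living cycle in a chordal coherent graph is null-homotopic in the complex $Y$ — is the heart of the proof, and it hinges on combining chordality with the three coherence conditions to control exactly which triangles appear.
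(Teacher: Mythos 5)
Your overall architecture matches the paper's: one direction is the preceding Lemma, and for the converse you verify Conditions 1, 2, 3 separately using chordality and the coherence conditions. Your treatment of Conditions 1 and 2 is essentially the same approach as the paper's (chordality forces triangles, Theorem~\ref{Theorem 2.1}(2) forces them to be of type $\{2,2,k\}$ hence spherical, and Lemma~\ref{Lemma 5.8} controls adjacency in the spherical link), though the paper's verification of Condition 2 is more careful --- it extracts an explicit 5-vertex configuration and runs an iterated case analysis rather than a loose induction on path length, because the issue of where exactly the chords land has to be controlled; your sketch waves at this but is morally the same.

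The genuine divergence is Condition 3, and there you have a real gap. The paper dispatches Condition 3 in one line by citing an external result (\cite{Goyal}, Theorem 3.4), whereas you attempt a self-contained induction on cycle length. You correctly locate the danger --- a chord of a living cycle supplied by chordality of $\Gamma$ could be a dead edge --- but your resolution does not work as stated: if a chord $\{x,y\}$ is dead, then $\{x,y\}\notin E(\mathrm{Liv}^\chi)$, and therefore the triangles $\{x,y,z\}$ it spans do \emph{not} contribute attached $2$-cells to the complex $Y$ (there is no boundary loop to attach along), so there is nothing to ``pass over'' the missing edge. The alternative you float --- proving that every induced cycle of length $\geq 4$ in $\mathrm{Liv}^\chi$ admits a \emph{living} chord, i.e.\ that $\mathrm{Liv}^\chi$ is itself chordal --- is the right remedy and is plausibly provable: a minimal induced cycle of $\mathrm{Liv}^\chi$ with only dead $\Gamma$-chords quickly produces either a triangle or a $K_4$ of $\Gamma$ with two labels $>2$, contradicting Theorem~\ref{Theorem 2.1}(2), or the forbidden configuration of Theorem~\ref{Theorem 2.1}(3). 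But you do not carry this out, so as written Condition 3 is unproved in your proposal, and this is precisely the point where the paper reaches outside itself.
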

\begin{proof}
	Assume that $\mathrm{Liv}^\chi$ is connected and dominant. We have to verify the three conditions in Theorem \ref{Theorem 1.2 }. For the first condition we have two cases:
		\begin{itemize}
			\item Let $e=\lbrace v,w\rbrace$ be an edge with $\chi(v)=\chi(w)=0$. Since $\mathrm{Liv}^\chi$ is dominant there exists $v',w'\in V(\Gamma)$ connected to $v$ and $w$ respectively with $\chi(v'),\chi(w')\neq0$. If $v'=w'$ we are done. Otherwise there exists a path $\mathcal{P}$ in $\mathrm{Liv}^\chi$ from $v'$ to $w'$. Consider $\Gamma_1$ to be the subgraph induced by $\mathcal{P}\cup\lbrace v,v',w,w'\rbrace$. Since $\Gamma_1$ contains a closed path containing $v$ and $w$ and $\Gamma$ is chordal there must exists a vertex $u\in V(\Gamma_1)$ such that $\lbrace u,v,w\rbrace$ forms a triangle in $\Gamma$. 
			\item Let $e=\lbrace v,w\rbrace$ be a dead edge. Since $\mathrm{Liv}^\chi$ is connected there is a path $\mathcal{P}$ from $v$ to $w$ in $\mathrm{Liv}^\chi$. Consider $\Gamma_1$ to be the subgraph induced by $\mathcal{P}\cup\lbrace v,w\rbrace$. Since $\Gamma_1$ contains a closed path containing $v$ and $w$ and $\Gamma$ is chordal there must exists a vertex $u\in V(\Gamma_1)$ such that $\lbrace u,v,w\rbrace$ forms a triangle in $\Gamma$ as we wanted to show. 
		\end{itemize}
	For the second condition let $v\in V(\Gamma)$ be a dead edge, then $\mathrm{slk}_{\mathrm{Liv}^\chi,v}^\chi\neq\emptyset$ because $\mathrm{Liv}^\chi$ is dominant. Suppose that $\mathrm{slk}_{\mathrm{Liv}^\chi,v}^\chi$ is disconnected and let $u,w$ be two vertices in different components. By Lemma \ref{Lemma 5.8} $\lbrace u,w\rbrace\notin E(\Gamma)$. Let $\mathcal{P}$ be a path from $u$ to $w$ in $\mathrm{Liv}^\chi$. Consider $\Gamma_1$ to be the subgraph induced by $\mathcal{P}\cup\lbrace u,v,w\rbrace$. Since $\Gamma_1$ contains a closed path and $\Gamma$ is chordal there must be some $a,b\in V(\mathcal{P})$ such that the subgraph induced by $\lbrace u,v,w,a,b\rbrace$ has the following form:
			$$\begin{tikzpicture}[main/.style = {draw, circle},node distance={15mm}] 
				\node[label=below:{$u$}][main] (1)  {}; 
				\node[label={$a$}][main] (6) [above right of=1] {};
				\node[label=below:{$v$}][main] (2) [right of=1] {}; 
				\node[label={below:$w$}][main] (3) [right of=2] {};
				\node[label={$b$}][main] (4) [above left of =3] {};
				
				\draw[-] (1) -- (2);
				\draw[-] (1) -- (6);
				\draw[-] (6) --  (2);
				\draw[-] (2) --   (3);
				\draw[-] (2) --   (4);
				\draw[-] (4) --   (3);
			\end{tikzpicture} $$
			
			There are three possibilities:
			\begin{itemize}
				\item \textbf{Case 1: $a = b$.} Then, both $\{u, a\}$ and $\{w, a\}$ are edges of $\Gamma$. By Lemma~\ref{Lemma 5.8}, it follows that $u$, $a$, and $w$ lie in the same connected component of $\mathrm{slk}_{\mathrm{Liv}^\chi,v}^\chi$, contradicting our assumption.
				
				\item \textbf{Case 2: $\{a, b\} \in E(\Gamma)$.} Since $\{u, a\}$, $\{a, b\}$, and $\{w, b\}$ are all edges in $\Gamma$, Lemma~\ref{Lemma 5.8} implies that $u$, $a$, $b$, and $w$ all lie in the same connected component of $\mathrm{slk}_{\mathrm{Liv}^\chi,v}^\chi$, again a contradiction.
				
				\item \textbf{Case 3: $\{a, b\} \notin E(\Gamma)$.} In this case, we repeat the construction: since $\{u,a,v\}$ and $\{w,b,v\}$ are triangles in $\Gamma$, we may find new vertices $c,d \in V(\mathrm{Liv}^\chi)$ such that $\{a, c, v\}$ and $\{b, d, v\}$ are also triangles. Again, if $c = d$ or $\{c,d\} \in E(\Gamma)$, we reduce to one of the previous two cases and obtain a contradiction. Otherwise, we iterate the process by finding vertices $e,f \in V(\mathrm{Liv}^\chi)$ with $\{c, e, v\}$ and $\{d, f, v\}$ forming triangles in $\Gamma$. Since $\Gamma$ is finite, this process must terminate, and eventually one of the previous two cases will occur, yielding a contradiction.
			\end{itemize}
	The last condition follows by [\cite{Goyal}{ Theorem 3.4}].\qedhere
\end{proof}
Applying [\cite{Lopez} Theorem 3.3] we immediately get that all $\Sigma$-invariants of coherent Artin groups coincide:
\begin{theorem}\label{Theorem 1.6}
	If $A_\Gamma$ is a coherent Artin group, then:
	$$\Sigma^n(A_\Gamma)=\Sigma^n(A_\Gamma,\mathbb{Z})=\lbrace[\chi]\in S(A_\Gamma)\mid\mathrm{Liv}^\chi\text{ is connected and dominant}\rbrace~\forall~n\geq 1$$
\end{theorem}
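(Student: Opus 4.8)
The plan is to deduce Theorem~\ref{Theorem 1.6} by combining the results of this section with two facts already on record: that coherent Artin groups satisfy the $\Sigma^1$-conjecture, and the stabilisation statement \cite[Theorem~3.3]{Lopez} quoted just above. Throughout, write $L=\lbrace[\chi]\in S(A_\Gamma)\mid\mathrm{Liv}^\chi\text{ is connected and dominant}\rbrace$. The first step is to pin the invariants down in degrees $1$ and $2$. Coherent Artin groups are balanced — being balanced is automatic once $\Gamma_{>2}^{\mathrm{even}}$ has no cycles, which holds for chordal $\Gamma$ with at most one large label per triangle — so Theorem~\ref{Theorem 4.8} gives the $\Sigma^1$-conjecture and hence $\Sigma^1(A_\Gamma)=\Sigma^1(A_\Gamma,\mathbb{Z})=L$. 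Coherence upgrades a finitely generated (i.e. $FP_1$) coabelian subgroup to a finitely presented, hence $FP_2$, one, which is precisely the identity $\Sigma^1(A_\Gamma)=\Sigma^2(A_\Gamma,\mathbb{Z})$ recalled at the start of this subsection; therefore $\Sigma^2(A_\Gamma,\mathbb{Z})=L$ as well.

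The second step identifies the homotopical invariant $\Sigma^2(A_\Gamma)$ with $L$. Coherent Artin groups are of type $\mathrm{FC}$ — by the Gordon–Wise conditions (Theorem~\ref{Theorem 2.1}), any complete subgraph of $\Gamma$ carries at most one label greater than $2$ and is therefore spherical — and so satisfy the $K(\pi,1)$-conjecture by \cite{Charney-Davis}, which makes Theorem~\ref{Theorem 1.2 } applicable. The Proposition immediately preceding the statement shows that, for coherent $A_\Gamma$, a character $\chi$ satisfies the three hypotheses of Theorem~\ref{Theorem 1.2 } — and, as its proof via \cite{Goyal} shows, the stronger simply connected form of its third condition — exactly when $[\chi]\in L$. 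The homotopical half of Theorem~\ref{Theorem 1.2 } then gives $L\subseteq\Sigma^2(A_\Gamma)$, while the standard filtrations give $\Sigma^2(A_\Gamma)\subseteq\Sigma^2(A_\Gamma,\mathbb{Z})\subseteq\Sigma^1(A_\Gamma,\mathbb{Z})=L$. Squeezing, $\Sigma^2(A_\Gamma)=\Sigma^2(A_\Gamma,\mathbb{Z})=\Sigma^1(A_\Gamma)=L$.

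The final step bootstraps to all higher degrees: by \cite[Theorem~3.3]{Lopez} the BNSR invariants of a coherent Artin group stabilise from level $2$ onward, both homotopically and homologically, so $\Sigma^n(A_\Gamma)=\Sigma^2(A_\Gamma)$ and $\Sigma^n(A_\Gamma,\mathbb{Z})=\Sigma^2(A_\Gamma,\mathbb{Z})$ for every $n\geq 2$. Together with the $n=1$ case this gives $\Sigma^n(A_\Gamma)=\Sigma^n(A_\Gamma,\mathbb{Z})=L$ for all $n\geq 1$, which is the assertion. The only step that is not routine bookkeeping with the filtrations $\Sigma^\infty\subseteq\cdots\subseteq\Sigma^1$ and $\Sigma^n(G)\subseteq\Sigma^n(G,\mathbb{Z})$ is the equality $\Sigma^2(A_\Gamma)=\Sigma^2(A_\Gamma,\mathbb{Z})$: in general one has only the inclusion $\Sigma^2(G)\subseteq\Sigma^2(G,\mathbb{Z})$, and the reverse inclusion here is genuinely special, resting on the fact that membership in $L$ forces the simply connected — not merely the $1$-acyclic — conclusion of Theorem~\ref{Theorem 1.2 }. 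I expect this point, together with confirming that coherent Artin groups fall under the $K(\pi,1)$-conjecture so that Theorem~\ref{Theorem 1.2 } is available at all, to be the only places where care is genuinely needed.
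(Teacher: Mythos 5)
Your proof is correct and follows essentially the same route as the paper: establish $\Sigma^1(A_\Gamma)=\Sigma^2(A_\Gamma,\mathbb{Z})=L$ from the $\Sigma^1$-conjecture plus coherence, obtain $L\subseteq\Sigma^2(A_\Gamma)$ from the Proposition together with the homotopical half of Theorem~\ref{Theorem 1.2 } (using the Goyal result to get simple connectivity rather than merely $1$-acyclicity), squeeze, and then invoke \cite[Theorem~3.3]{Lopez} to stabilise in all higher degrees. The paper's own proof is just a very compressed version of this same chain of deductions, so the only difference is the level of detail.
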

\section{Algebraic fibring of Artin kernels}
Let $\mathcal{P}$ be a finiteness property such as being $F_n$ or $FP_n$. A group $G$ is said to be \textbf{$\mathcal{P}$-algebraically fibred} if it admits a non-trivial homomorphism $\varphi:G\to\mathbb{Z}$ with $\ker(\varphi)$  of type $\mathcal{P}$.

Using the results about the $\Sigma^1$ and $\Sigma^2$-conjecture along with Proposition \ref{Proposition 2.8} we can construct examples of Artin kernels that  are either finitely generated but not finitely presented, or not even finitely generated.
\begin{example} Let $\Gamma$ be the following graph:
	$$\begin{tikzpicture}[main/.style = {draw, circle},node distance={15mm}] 
		\node[label=below:{$a_2$}][main] (1)  {}; 
		\node[label={$a_1$}][main] (6) [above right of=1] {};
		\node[label=below:{$a_3$}][main] (2) [right of=1] {}; 
		\node[label={below:$b_1$}][main] (3) [right of=2] {};
		\node[label=below:{$b_2$}][main] (4) [right of=3] {}; 
		\node[label=below:{$c_1$}][main] (5) [right of=4] {}; 
		
		\draw[-] (1) -- node[below] {$\scriptstyle{2}$}   (2);
		\draw[-] (1) -- node[pos=0.4,above] {$\scriptstyle{3}$}   (6);
		\draw[-] (6) -- node[pos=0.7,above right] {$\scriptstyle{3}$}   (2);
		
		\draw[-] (2) -- node[below] {$\scriptstyle{4}$}   (3);
		\draw[-] (3) -- node[below] {$\scriptstyle{5}$}   (4);
		\draw[-] (4) -- node[below] {$\scriptstyle{4}$}   (5);
	\end{tikzpicture} $$
	Let $\chi:A_\Gamma\to\mathbb{Z}$ be a character and set $a=\chi(a_1)=\chi(a_2)=\chi(a_3)$, $b=\chi(b_1)=\chi(b_2)$ and $c=\chi(c_1)$. Assume further that $a,b,c+b,a+b\neq 0$. Then, $[\chi]\in\Sigma^2(A_\Gamma)$ and if $\chi$ is discrete, $\Ker(\chi)$ is finitely presented.
\end{example}
\begin{example}
	Let $\Gamma$ be the following graph:
	$$\begin{tikzpicture}[main/.style = {draw, circle},node distance={15mm}] 
		\node[label=below:{$a_2$}][main] (1)  {}; 
		\node[label={$a_1$}][main] (6) [above right of=1] {};
		\node[label=below:{$a_3$}][main] (2) [right of=1] {}; 
		\node[label={$a_4$}][main] (3) [above right of=2] {};
		\draw[-] (1) -- node[below] {$\scriptstyle{6}$}   (2);
		\draw[-] (1) -- node[pos=0.4,above] {$\scriptstyle{4}$}   (6);
		\draw[-] (6) -- node[pos=0.7,above right] {$\scriptstyle{6}$}   (2);
		\draw[-] (6) -- node[above] {$\scriptstyle{4}$}   (3);	
		\draw[-] (2) -- node[below] {$\scriptstyle{4}$}   (3);
	\end{tikzpicture} $$
The graph $\Gamma$ is connected and $\pi_1(\Gamma)$ has rank $2$, so $A_\Gamma$ satisfies the $\Sigma^1$-conjecture. Since $A_\Gamma$ is also $2$-dimensional, it satisfies the $\Sigma^2$-conjecture. Consider two characters:
\begin{itemize}
	\item Define $\chi_1:A_\Gamma\to\mathbb{Z}$ by $\chi_1(a_1)=\chi_1(a_2)\neq 0$ and $\chi_1(a_3)=\chi_1(a_4)=0$. Then, $[\chi]\in\Sigma^1(A_\Gamma)\setminus\Sigma^2(A_\Gamma)$, so $\Ker(\chi)$ is finitely generated but not finitely presented.
	\item Define $\chi_2:A_\Gamma\to\mathbb{Z}$ by $\chi_2(a_1)=\chi_2(a_2)=0$ and $\chi_2(a_3)=\chi_2(a_4)\neq0$. In this case  $[\chi]\notin \Sigma^1(A_\Gamma)$, so $\Ker(\chi)$ is not finitely generated.
\end{itemize} 

\end{example}

\end{document}